\documentclass[12pt]{article}
\usepackage{amsthm,fullpage}
\usepackage{amsmath}
\usepackage{amssymb}
\usepackage{mathtools}
\usepackage{float}
\usepackage{algorithm}
\usepackage[noend]{algpseudocode}
\usepackage[toc,page]{appendix}
\usepackage{graphicx}
\usepackage{hyperref}
\usepackage[pdftex,dvipsnames,table]{xcolor}
\usepackage{soul} 
\usepackage[export]{adjustbox} 
\usepackage{subfig}

\usepackage{makecell}

\theoremstyle{plain}
\numberwithin{equation}{section}
\newtheorem{definition}{Definition}[section]
\newtheorem{theorem}{Theorem}[section]
\newtheorem{corollary}[theorem]{Corollary}
\newtheorem{lemma}[theorem]{Lemma}

\newtheorem{remark}{Remark}[section]

\DeclareMathOperator{\diag}{diag}
\DeclareMathOperator{\Tr}{Tr}

\DeclareMathOperator{\spec}{spec}

\DeclareMathOperator{\Rea}{Re}
\DeclareMathOperator{\Ima}{Im}

\newcommand{\hlgreen}[1]{{\sethlcolor{green}\hl{#1}}}

\title{{\textsf{Bounds for the extremal eigenvalues of gain Laplacian matrices}}}

\author{\textsf{M. Rajesh Kannan}\thanks{Department of Mathematics, Indian Institute of Technology Kharagpur, Kharagpur 721 302, India. Email: rajeshkannan@maths.iitkgp.ac.in, rajeshkannan1.m@gmail.com }\and  \textsf{Navish Kumar}  \thanks{Undergraduate Student, Department of Humanities and Social Sciences, Indian Institute of Technology Kharagpur, Kharagpur 721 302, India. Email: navish@iitkgp.ac.in, navish.iitkgp@gmail.com} \and  \textsf{Shivaramakrishna Pragada} \thanks{Undergraduate Student, Department of Aerospace Engineering,  Indian Institute of Technology Kharagpur, Kharagpur 721 302, India. Email: shivaram@iitkgp.ac.in, shivaramkratos@gmail.com }
}

\date{\today}
\begin{document}
\maketitle
\baselineskip=0.25in

\begin{abstract}
  A complex unit gain graph ($\mathbb{T}$-gain graph), $\Phi = (G, \varphi)$ is a graph where the function $\varphi$ assigns a unit complex number to each orientation of an edge of $G$, and its inverse is assigned to the opposite orientation. A $ \mathbb{T} $-gain graph $\Phi$ is balanced if the product of the edge gains of each cycle (with a fixed orientation)  is $1$. Signed graphs are special cases of $\mathbb{T}$-gain graphs.

 The adjacency matrix of $\Phi$, denoted by $ \mathbf{A}(\Phi)$ is defined canonically. The  gain Laplacian for $\Phi$ is defined as $\mathbf{L}(\Phi) = \mathbf{D}(\Phi) - \mathbf{A}(\Phi)$, where $\mathbf{D}(\Phi)$ is the diagonal matrix with diagonal entries are the degrees of the vertices of $G$. The minimum number of vertices (resp., edges) to be deleted from $\Phi$ in order to get a balanced gain graph is the frustration number (resp, frustration index). We show that frustration number and frustration index are bounded below by the smallest eigenvalue of $\mathbf{L}(\Phi)$.  We provide several lower and upper bounds for extremal eigenvalues of $\mathbf{L}(\Phi)$ in terms of different graph parameters such as the number of edges, vertex degrees, and average $2$-degrees. The signed graphs are particular cases of the $\mathbb{T}$-gain graphs, all the bounds established in this paper hold for signed graphs. Most of the bounds established here are new for signed graphs. Finally, we perform comparative analysis for all the obtained bounds in the paper with the state-of-the-art bounds available in the literature for randomly generated Erd\H{o}s-Re\'yni graphs.

 Some of the major highlights of our paper are the gain-dependent bounds, limit convergence of the bounds to the extremal eigenvalues, and optimal extremal bounds obtained by posing  optimization problems to achieve the best possible bounds.

\end{abstract}

\textbf{Keywords. }  Extremal eigenvalues, Frustration index, Frustration number,  Gain Laplacian matrix, Signed graphs.

{\bf Mathematics Subject Classification(2010):}    05C22(primary); 05C50(secondary).

\section{Introduction}
The study of matrices and eigenvalues associated with graphs has evolved over the past few decades. There has been a growing interest among researchers to study the adjacency, Laplacian, and normalized Laplacian matrices associated with gain graphs. The spectral properties of Laplacian $\mathbf{L}(\Phi)$ are leveraged to describe the graph-theoretic properties. Analyzing the extreme eigenvalues of the gain Laplacian, i.e., providing bounds for them and characterizing the types of graphs for which the equality holds, is an interesting problem to consider.

A signed graph $ \Psi=(G, \psi ) $ is a  graph $G$ with a signature function $\sigma: E(G) \rightarrow \{\pm 1\}$, where $E(G)$ is the edge set of $G$. Signed graphs are particular cases of $\mathbb{T}$-gain graphs. The smallest eigenvalue of the Laplacian of a signed graph $\mathbf{L}(\Psi)$ has been shown to be an excellent measure of the graph frustration, that is, the smallest number of vertices to be deleted from $\Psi$ in order to get a balanced graph \cite{belardo2014balancedness,belardo2020balancedness}. As is extensively surveyed in \cite{aref2019signed}, the frustration index is a key to frequently stated problems in many different fields of research \cite{doreian2015structural,dovslic2007computing,iacono2010determining,kasteleyn1963dimer}. In biological networks, optimal decomposition of a network into monotone subsystems is made possible by calculating the frustration index of the underlying signed graph \cite{iacono2010determining}. In physics, the frustration index provides the ground state of atomic magnet models \cite{hartmann2011ground,kasteleyn1963dimer}. In international relations, the dynamics
of alliances and enmities between countries can be investigated using the frustration
index \cite{doreian2015structural}. The frustration index can also be used as an indicator of network bi-polarisation in practical examples involving financial portfolios. For instance, some low-risk portfolios are shown to have an underlying balanced signed graph containing negative edges \cite{harary2002signed}. In chemistry, bipartite edge frustration can be used as a stability indicator of carbon allotropes known as fullerenes \cite{dovslic2005bipartivity, dovslic2007computing}.

In \cite{aref2020detecting}, the authors proposed programming models for optimal partitioning of a signed graph into cohesive groups. They tackle the intensive computations of dense signed networks by providing upper and lower bounds for the frustration index. This is a scenario where our optimal extremal bounds will be useful to close the gap between the two bounds, thereby returning the vertices' optimal partitioning. For more works in this direction we refer to \cite{aref2020multilevel,aref2020analyzing}. There are also lots of applications of the optimal extremal bounds for the Laplacian in the field of network science and analysis \cite{coscia2021atlas,estrada2005spectral}. As the networks' size grows, computing exact eigenvalues becomes intractable (for e.g., Twitter, Facebook user networks). In these types of scenarios, optimal extremal bounds will provide the best solutions one can hope for. These are the places where we draw our motivation for studying optimal extremal bounds.

The eigenvalues of the adjacency, Laplacian, and signless Laplacian matrices reveal several combinatorial information about the underlying graphs viz., number of edges, number of spanning trees, connectedness, bipartiteness, and many more. For more details, we refer to \cite{brouwer2011spectra,intro_graph_spectra_2009,signless-genesis}. In \cite{fallat-fan-laa2012}, the authors introduced the notion of vertex bipartiteness and the edge bipartiteness for graphs and shown that the smallest eigenvalue of the signless Laplacian matrix gives a lower bound for these quantities. Further, in \cite{belardo2014balancedness, belardo2020balancedness},  the authors studied the notion of frustration index and frustration number of signed graphs and $\mathbb{T}_4$-gain graphs and proved that the least eigenvalue of the signless Laplacian of the signed and  $\mathbb{T}_4$-gain graphs provides a lower bound for these quantities. In Section \ref{sec:frustration-bounds}, we define the notion of the frustration index and the frustration number for the $\mathbb{T}$-gain graphs, and  show that both of them are bounded below by the smallest eigenvalue of the gain Laplacian.  These notions are studied in the same spirit of the classical idea of algebraic connectivity and its connection with the second smallest eigenvalue of the Laplacian studied by Fiedler. Thus, the smallest eigenvalues of the signless Laplacian and the gain Laplacian matrices are interesting objects to consider.

In \cite{de2011smallest}, the authors established several fascinating bounds for the smallest eigenvalue of the signless Laplacian matrices. One of this article's main objectives is to extend these bounds for the complex unit gain graphs. All of our bounds depend on the gain of the underlying graph. Let $\Phi = (G,\varphi)$ be a $\mathbb{T}$-gain graph with $n$ vertices and $m$ edges, and let $a_{ij}$ be the gain of the edge connecting the vertices $v_i$ and $v_j$. Define \[a(\Phi) = \frac{1}{m}\bigg(\sum\limits_{\substack{v_i \sim v_j,\\ i<j}}(1-\Rea(a_{ij}))\bigg)~~~~~ \mbox{and}~~~~~ b(\Phi) = \frac{1}{m}\bigg(\sum\limits_{\substack{v_i \sim v_j,\\ i<j}}(1-\Ima(a_{ij}))\bigg).\] Let $\lambda_1(\Phi)$ be the smallest eigenvalue of the gain Laplacian matrix $L(\Phi)$ (defined in Section \ref{sec:prelim}). The main results of  Section \ref{sec:least-eig-bounds} are the bounds for $\lambda_1(\Phi)$ proved in Theorem \ref{chromres-2} and Theorem \ref{chro-ima}. Such bounds are further optimized for the bipartite graphs (Theorem \ref{bipart-refine}). Also, we show that these are the best possible bounds that can be obtained using this proof strategy.

%
%
%


Using Gershgorin's theorem, we propose novel upper bounds for the largest eigenvalue $\lambda_n(\Phi)$ of the gain Laplacian in terms of arbitrary invertible diagonal matrices (Theorem \ref{theorem:avg-2-degree}).  Besides this, we propose a couple of bounds for $\lambda_n(\Phi)$ in terms of powers and traces of the gain Laplacian matrix itself, and show the limit convergence of these bounds to $\lambda_n(\Phi)$ (Theorem \ref{thm:diag-elem} and Theorem \ref{trace-bound}). Finally, we devote the last section to analyze all the obtained bounds in the paper and compare them with the state-of-the-art bounds available in the literature for randomly generated Erd\H{o}s-Re\'yni graphs.

The outline of this paper is as follows:  In Section \ref{sec:prelim}, we include some needed known results for  graphs and matrices.  We prove, in Section \ref{sec:frustration-bounds}, a lower bound for the frustration index/number, in terms of least eigenvalue of $\mathbf{L}(\Phi)$. In Section \ref{sec:least-eig-bounds}, we derive bounds for the least eigenvalue of $\mathbf{L}(\Phi)$ in terms of the chromatic number and the edge gains. In Section \ref{sec:largest-eig-bounds}, we establish bounds for the largest eigenvalue of $\mathbf{L}(\Phi)$. In section \ref{sec:comparison}, we perform comparative analysis for all the obtained bounds with each other.

\section{Preliminaries}\label{sec:prelim}

Let $ G $ be a simple undirected graph. An oriented edge from the vertex $ v_i $ to the vertex $ v_j $ is denoted by $ \overrightarrow{e}_{ij} $. For each undirected edge $ e_{ij}\in E(G) $, there is a pair of oriented edges $ \overrightarrow{e}_{ij} $ and $ \overrightarrow{e_{ji}} $. The collection $ \overrightarrow{E}(G):=\{ \overrightarrow{e}_{ij},\overrightarrow{e_{ji}}: e_{ij}\in E(G)\} $ is  the \emph{oriented edge set associated with $ G $}. Given a group  $\mathfrak{G}$ and a graph $G$, the $\mathfrak{G}$-gain graph is defined as follows:  For each oriented edge $e_{ij}$ assign a value (the gain of the edge $e_{ij}$) $g$ from $\mathfrak{G}$ and assign $g^{-1}$ to the orientated edge $e_{ji}$. Gain graphs were widely studied in \cite{gain-genesis1, zaslavsky1989biased}.  If $\mathfrak{G} = \mathbb{T}$, where $ \mathbb{T}=\{ z\in \mathbb{C}: |z|=1\} $, then $\mathfrak{G}$-gain graphs are called  complex unit gain graphs. Precisely,  a \emph{complex unit gain graph (or $ \mathbb{T} $-gain graph)} on a simple graph $ G $ is an ordered pair $ (G, \varphi) $, where the gain function $ \varphi: \overrightarrow{E}(G) \rightarrow \mathbb{T} $ is a mapping  such that $ \varphi( \overrightarrow{e}_{ij}) =\varphi(\overrightarrow{e_{ji}})^{-1}$, for every $ e_{ij}\in E(G) $. A $ \mathbb{T} $-gain graph $ (G, \varphi) $ is  denoted by $ \Phi $. The \emph{ adjacency matrix} of  a $ \mathbb{T} $-gain graph $ \Phi=(G, \varphi)$ is  a Hermitian  matrix, denoted by $ \mathbf{A}(\Phi)$ and its $ (i,j)th $ entry is defined as follows:
$$a_{ij}=\begin{cases}
	\varphi(\overrightarrow{e_{ij}})&\text{if } \mbox{$v_i\sim v_j$},\\
	0&\text{otherwise.}\end{cases}$$
The spectrum and the spectral radius of $ \Phi $ are the spectrum and the spectral radius of $ \mathbf{A}(\Phi) $  and denoted by $ \spec(\Phi) $ and $ \rho(\Phi) $, respectively. Let $(G, 1)$ and $(G, -)$ denote the gain graphs with all the edge gains are equal to $1$ and $-1$, respectively. For a graph $G$ on $n$ vertices, the eigenvalues of $(G,-)$ are denoted by $\lambda_1(-) \leq \lambda_2(-) \leq \dots \leq \lambda_n(-)  $. For more details about the notion of $\mathbb{T}$-gain graphs, we refer to \cite{reff2012spectral,reff2016oriented,wang2018determinant,zaslavsky1989biased}.

The degree of the vertex $v_j$ is denoted by $d_j$. By slight abuse of notation, we write $\varphi(e_{ij})$ as only $\varphi_{ij}$, to represent the gain on edge $e_{ij}$. We define a diagonal matrix $\mathbf{D}(G) = \diag(d_1, d_2, ..., d_n )$, where $d_i$ is the degree of vertex $v_i$ in the underlying graph $G$. The  Laplacian matrix $\mathbf{L}(\Phi)$ is defined as $\mathbf{L}(\Phi) = \mathbf{D}(G)-\mathbf{A}(\Phi)$, where $\mathbf{D}(G) = \diag(d_1, d_2, ..., d_n )$ is a diagonal matrix and $d_i$ is the degree of vertex $v_i$ in the underlying graph $G$. It is clear from the above definition that $\mathbf{L}(\Phi)$ is Hermitian and positive semi-definite.

The gain of a cycle (with some orientation) $C = v_1 v_2 \ldots v_l v_1$, denoted by $\varphi(C)$, is defined as the product of the gains of its edges, that is $$\varphi(C) = \varphi(e_{12}) \varphi(e_{23}) \cdots \varphi(e_{(l-1)l}) \varphi(e_{l1}).$$
A cycle $C$ is said to be neutral if $\varphi(C) = 1$, and a gain graph is said to be balanced if all its cycles, if any, are neutral. For a cycle $C$ of $G$, we denote the real part of the gain of $C$ by $\Re(\varphi(C))$, and it is independent of the orientation.
A function from the vertex set of $G$ to the complex unit circle $\mathbb{T}$ is called a switching function. We say that, two gain graphs $\Phi_1 = (G, \varphi_1)$ and $\Phi_2 = (G, \varphi_2)$ are switching equivalent, written as $\Phi_1 \sim \Phi_2$, if there is a switching function $\zeta: V \to \mathbb{T}$ such that $$\varphi_2(e_{ij})=\zeta(v_i)^{-1}\varphi_2(e_{ij})\zeta(v_j).$$
The switching equivalence of two gain graphs can be defined in the following equivalent way: Two gain graphs $\Phi_1 = (G, \varphi_1)$ and $\Phi_2 = (G, \varphi_2)$ are switching equivalent, if there exists
a diagonal matrix $\mathbf{D}_{\zeta}$ with diagonal entries from $\mathbb{T}$, such that
\begin{align}\label{eq: switching equi}
	\mathbf{A}(\Phi_2) = \mathbf{D}_{\zeta}^{-1}\mathbf{A}(\Phi_1)\mathbf{D}_{\zeta}.
\end{align}Switching equivalence preserves connectivity and balancedness. The least Laplacian eigenvalue has a special role in the spectral theory of gain graphs. In fact, if the least eigenvalue is zero, then $\Phi = (G, \varphi)$ is switching equivalent to $(G, 1)$, and $\mathbf{L}(\Phi)$ is similar to $\mathbf{L}(G)$.  Similarly, if $\Phi$ is switching equivalent to $(G, -)$, then $\mathbf{L}(\Phi)$ is similar to $\mathbf{Q}(G) =\mathbf{D}(G) +\mathbf{A}(G)$, the signless Laplacian of $G$, and then we have the signless Laplacian theory of (usual) graphs \cite{kannan2020normalized, reff2012spectral}.

The following lemma will be useful.
\begin{lemma}[{\cite[Lemma 5.3]{reff2012spectral}}]   \label{lemma:unorm hermitian}
	Let $\Phi$ be a gain graph on $n$ vertices and $\mathbf{x}^T = (x_1 , x_2 , \dots, x_n) \in \mathbb{C}^n$ be a row vector. Then
	\begin{equation*}
		\mathbf{x}^*\mathbf{L}(\Phi)\mathbf{x} = \sum\limits_{\substack{v_i \sim v_j,\\ i<j}} |x_i-a_{ij} x_j|^2.
	\end{equation*}
\end{lemma}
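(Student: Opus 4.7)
The plan is to expand $\mathbf{x}^*\mathbf{L}(\Phi)\mathbf{x}$ directly using the definition $\mathbf{L}(\Phi)=\mathbf{D}(G)-\mathbf{A}(\Phi)$ and then regroup the resulting sum edge-by-edge so that each edge $e_{ij}$ with $i<j$ contributes exactly $|x_i - a_{ij}x_j|^2$. No auxiliary machinery is needed beyond the Hermitian property of $\mathbf{A}(\Phi)$ and the unimodularity of the gains.

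First I would split the quadratic form as $\mathbf{x}^*\mathbf{L}(\Phi)\mathbf{x} = \mathbf{x}^*\mathbf{D}(G)\mathbf{x} - \mathbf{x}^*\mathbf{A}(\Phi)\mathbf{x}$. The diagonal term gives $\sum_{i=1}^n d_i |x_i|^2$, which I would rewrite by distributing each $d_i$ across the edges incident to $v_i$, yielding $\sum_{v_i\sim v_j,\, i<j}(|x_i|^2+|x_j|^2)$. For the adjacency term, $\mathbf{x}^*\mathbf{A}(\Phi)\mathbf{x} = \sum_{i,j}\overline{x_i}\, a_{ij} x_j$, and since $a_{ij}=0$ for non-adjacent $v_i,v_j$ and $a_{ji}=\overline{a_{ij}}$ on edges (as $\mathbf{A}(\Phi)$ is Hermitian), grouping the $(i,j)$ and $(j,i)$ terms for each edge gives
\[
\mathbf{x}^*\mathbf{A}(\Phi)\mathbf{x} = \sum_{\substack{v_i\sim v_j,\\ i<j}}\bigl(\overline{x_i}\,a_{ij}x_j + \overline{x_j}\,\overline{a_{ij}}\,x_i\bigr)=\sum_{\substack{v_i\sim v_j,\\ i<j}} 2\,\Rea\!\bigl(\overline{x_i}\,a_{ij}\,x_j\bigr).
\]

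Putting the two pieces together, the contribution of each edge $e_{ij}$ with $i<j$ is $|x_i|^2+|x_j|^2-2\,\Rea(\overline{x_i}\,a_{ij}\,x_j)$. I would then verify by a one-line expansion that, using $|a_{ij}|=1$,
\[
|x_i - a_{ij}x_j|^2 = |x_i|^2 - \overline{x_i}\,a_{ij}\,x_j - x_i\,\overline{a_{ij}}\,\overline{x_j} + |a_{ij}|^2|x_j|^2 = |x_i|^2 + |x_j|^2 - 2\,\Rea(\overline{x_i}\,a_{ij}\,x_j),
\]
which matches the edge contribution exactly, and summing over all edges with $i<j$ yields the claimed formula.

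There is no real obstacle: the only subtle point is tracking that each edge is counted once on the left (through the degrees and the two symmetric off-diagonal entries) and once on the right (through the $i<j$ convention), which is precisely what the Hermitian structure and the condition $|a_{ij}|=1$ are designed to synchronize.
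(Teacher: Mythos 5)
Your proposal is correct, and the computation is the standard one. Note that the paper itself does not prove this lemma; it is cited directly from Reff's paper (\cite[Lemma~5.3]{reff2012spectral}), where the proof is essentially the same edge-by-edge regrouping you give: split $\mathbf{x}^*\mathbf{L}(\Phi)\mathbf{x}$ into the degree and adjacency pieces, convert $\sum_i d_i|x_i|^2$ into a sum over edges, pair the $(i,j)$ and $(j,i)$ off-diagonal terms using $a_{ji}=\overline{a_{ij}}$, and then recognize $|x_i|^2+|x_j|^2-2\Rea(\overline{x_i}a_{ij}x_j)$ as the expansion of $|x_i-a_{ij}x_j|^2$ via $|a_{ij}|=1$. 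No gap.
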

\begin{remark}
Hereafter, to simplify the notation,  we write  $\sum\limits_{\substack{v_i \sim v_j}} |x_i-a_{ij} x_j|^2$ instead of  $\sum\limits_{\substack{v_i \sim v_j,\\ i<j}} |x_i-a_{ij} x_j|^2$, if there is no confusion.
\end{remark}
\begin{theorem}[{\cite[Corollary 4.5]{reff2012spectral}}]\label{theorem:upperbounds-beta_n}
	Let $\Phi = (G,\varphi)$ be a complex unit gain graph on $n$ vertices.  Let  $m_i = \frac{\sum_{v_i \sim v_j} d_j}{d_i}$ denote the average $2$-degree of the vertex $v_i$. Then,
	\begin{enumerate}
		\item[(i)] $\lambda_n(\Phi) \leq 2\Delta.$
		\item[(ii)] $\lambda_n(\Phi) \leq \max\limits_{v_i \sim v_j}\{d_i+m_i\}.$
		\item[(iii)] $\lambda_n(\Phi) \leq \max\limits_{v_i \sim v_j}\{d_i+d_j\}.$
		\item[(iv)]$\lambda_n(\Phi) \leq \max\limits_{v_i \sim v_j}\{(d_i (d_i+m_i)+d_j(d_j+m_j))/(d_i+d_j)\}.$
	\end{enumerate}Furthermore, equality holds if and only if $\Phi \sim (G, -)$.
\end{theorem}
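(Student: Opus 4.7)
The plan is to reduce all four upper bounds to the corresponding classical bounds for the signless Laplacian $\mathbf{Q}(G) = \mathbf{D}(G) + \mathbf{A}(G)$ of the underlying graph. For an arbitrary $\mathbf{x} \in \mathbb{C}^n$, set $\mathbf{y} = (|x_1|,\ldots,|x_n|)^T \in \mathbb{R}_{\geq 0}^n$. Because each gain $a_{ij}$ has modulus one, the triangle inequality yields $|x_i - a_{ij}x_j| \leq |x_i| + |x_j|$, and Lemma \ref{lemma:unorm hermitian} gives
\[
\mathbf{x}^*\mathbf{L}(\Phi)\mathbf{x} \;=\; \sum_{v_i \sim v_j} |x_i - a_{ij}x_j|^2 \;\leq\; \sum_{v_i \sim v_j} \bigl(|x_i|+|x_j|\bigr)^2 \;=\; \mathbf{y}^T \mathbf{Q}(G)\, \mathbf{y}.
\]
Since $\|\mathbf{x}\| = \|\mathbf{y}\|$, applying the Rayleigh--Ritz characterization to both sides gives $\lambda_n(\mathbf{L}(\Phi)) \leq \lambda_n(\mathbf{Q}(G))$.

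Having reduced everything to the signless Laplacian, each of (i)--(iv) follows from a well-known bound on $\lambda_n(\mathbf{Q}(G))$. Concretely, (i) is an immediate application of Gershgorin's theorem to $\mathbf{Q}(G)$, whose $i$-th diagonal entry is $d_i$ and whose $i$-th off-diagonal absolute row sum is also $d_i$, producing $\lambda \leq 2d_i \leq 2\Delta$. Bounds (ii) and (iii) are derived by examining the eigenvalue equation $(\lambda - d_r) y_r = \sum_{j \sim r} y_j$ satisfied by the Perron eigenvector $\mathbf{y} \geq 0$ of $\mathbf{Q}(G)$ at the coordinate $r$ where $y_r = \max_i y_i$, and bound (iv) follows from the same identity used at the two largest coordinates and then averaged with weights $d_i/(d_i+d_j)$ and $d_j/(d_i+d_j)$; these are standard Merris-- and Das-type arguments, which I would invoke by citation.

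For the equality statement, suppose equality holds in any one of (i)--(iv) for $\lambda_n(\mathbf{L}(\Phi))$, and let $\mathbf{x}$ be a corresponding principal eigenvector. Then equality must propagate through the reduction step, so $|x_i - a_{ij}x_j| = |x_i| + |x_j|$ on every edge $e_{ij}$; equivalently, $a_{ij}\,x_j/x_i$ is a negative real number whenever $x_i,x_j \neq 0$. Setting $\zeta(v_i) = x_i/|x_i|$ on the support of $\mathbf{x}$ (and extending $\zeta$ arbitrarily to $\mathbb{T}$ off the support), this gives a switching function under which every edge gain equals $-1$, so by (\ref{eq: switching equi}) we have $\Phi \sim (G,-)$. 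Conversely, if $\Phi \sim (G,-)$, then by (\ref{eq: switching equi}) $\mathbf{L}(\Phi)$ is unitarily similar to $\mathbf{Q}(G)$, so the extremal eigenvalues coincide and each bound attains equality for $\mathbf{L}(\Phi)$ precisely when it does for $\mathbf{Q}(G)$.

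The main obstacle I expect is the equality analysis, specifically ensuring that when the principal eigenvector $\mathbf{x}$ of $\mathbf{L}(\Phi)$ has some zero entries, the triangle-inequality equality still pins down the gains on every edge used by the bound-defining subgraph; this will require a short argument that the support of $\mathbf{x}$ meets every relevant edge, relying on the eigenvalue equation and the connectivity of $G$. A secondary, bookkeeping obstacle is verifying that the classical averaging used to derive bound (iv) still respects the inequality $\mathbf{x}^*\mathbf{L}(\Phi)\mathbf{x} \leq \mathbf{y}^T\mathbf{Q}(G)\mathbf{y}$ tightly enough for the equality characterization.
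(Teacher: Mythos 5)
This statement is cited from \cite[Corollary 4.5]{reff2012spectral}; the paper includes no proof of its own, so there is nothing internal to compare against. That said, your reconstruction follows exactly the proof chain the paper itself implicitly relies on: the reduction $\lambda_n(\mathbf{L}(\Phi)) \leq \lambda_n(\mathbf{Q}(G))$ via $|x_i - a_{ij}x_j| \leq |x_i|+|x_j|$ is precisely Lemma \ref{lemma:phi-psi} (cited separately as \cite[Theorem 4.4]{reff2012spectral}), and the remaining step is to import the classical bounds on $\lambda_n(\mathbf{Q}(G))$, which is what you do. Your switching-function argument for the ``only if'' direction of the equality case is also the standard one and is written correctly: $|x_i - a_{ij}x_j| = |x_i|+|x_j|$ on an edge forces $\zeta(v_i)^{-1}a_{ij}\zeta(v_j) = -1$ with $\zeta(v_k) = x_k/|x_k|$.

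One caution worth making explicit, which you brush against but do not quite name: the ``if and only if'' clause as stated in the theorem is not literally correct, and your own argument exposes this. Your chain is $\lambda_n(\mathbf{L}(\Phi)) \leq \lambda_n(\mathbf{Q}(G)) \leq B$, where $B$ is one of the four bounds. Equality of the outer quantities forces \emph{both} links to be tight. Tightness of the first link gives $\Phi \sim (G,-)$, but tightness of the second is a structural condition on $G$ alone; e.g.\ for (i) it requires $G$ to be $\Delta$-regular and bipartite. Conversely, $\Phi \sim (G,-)$ only gives $\lambda_n(\mathbf{L}(\Phi)) = \lambda_n(\mathbf{Q}(G))$, not equality with $B$. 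Your closing sentence (``each bound attains equality for $\mathbf{L}(\Phi)$ precisely when it does for $\mathbf{Q}(G)$'') is the correct characterization, and you should present it as a correction to, not a confirmation of, the bare ``iff $\Phi \sim (G,-)$'' wording. Your secondary worry about zero entries of $\mathbf{x}$ is legitimate but resolvable: an eigenvector of $\mathbf{L}(\Phi)$ for $\lambda_n$ with $\|\mathbf{x}\| = \|\mathbf{y}\|$ and $\mathbf{x}^*\mathbf{L}(\Phi)\mathbf{x} = \mathbf{y}^T\mathbf{Q}(G)\mathbf{y} = \lambda_n(\mathbf{Q}(G))$ forces $\mathbf{y}$ to be a $\lambda_n$-eigenvector of $\mathbf{Q}(G)$, and on a connected graph the Perron eigenvector of $\mathbf{Q}(G)$ is strictly positive, so $\mathbf{x}$ has full support.
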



The following theorem provides the variational formulation for the eigenvalue problem of the Hermitian matrices.
\begin{theorem}[Courant-Fischer Theorem, \cite{meyer2000matrix}] \label{theorem:CF}
	Let $\bf H$ be an $n \times n$ Hermitian matrix with eigenvalues $\lambda_1 \leq \lambda_2 \leq \ldots \leq \lambda_n$. For an integer $k \: (1 \leq k \leq n)$, we have
	
	$$\lambda_k = \max\limits_{\mathbf{x}^{(1)},\mathbf{x}^{(2)}, \ldots, \mathbf{x}^{(k-1)} \in \mathbb{C}^n} \: \min\limits_{\substack{\mathbf{x} \perp \mathbf{x}^{(1)},\mathbf{x}^{(2)}, \ldots, \mathbf{x}^{(k-1)}; \\ \mathbf{x} \neq 0; \\ \mathbf{x}  \in \mathbb{C}^n}} \frac{\mathbf{x}^*\mathbf{H}\mathbf{x}}{\mathbf{x}^*\mathbf{x}},$$
	
	$$\lambda_k = \min\limits_{\mathbf{x}^{(k+1)},\mathbf{x}^{(k+2)}, \ldots,
		\mathbf{x}^{(n)} \in \mathbb{C}^n} \max\limits_{\substack{\mathbf{x} \perp \mathbf{x}^{(k+1)},\mathbf{x}^{(k+2)}, \ldots, \mathbf{x}^{(n)}; \\ \mathbf{x} \neq 0; \\ \mathbf{x}  \in \mathbb{C}^n}} \frac{\mathbf{x}^*\mathbf{H}\mathbf{x}}{\mathbf{x}^*\mathbf{x}},$$where $\{\mathbf{x}^{(1)},\mathbf{x}^{(2)}, \ldots, \mathbf{x}^{(k-1)}\}$ and $\{ \mathbf{x}^{(k+1)},\mathbf{x}^{(k+2)}, \ldots, \mathbf{x}^{(n)} \}$ are linearly independent sets.
\end{theorem}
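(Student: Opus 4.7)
The plan is to prove both identities via the spectral decomposition of the Hermitian matrix $\mathbf{H}$. Fix an orthonormal eigenbasis $\{\mathbf{u}_1,\ldots,\mathbf{u}_n\}$ with $\mathbf{H}\mathbf{u}_i=\lambda_i\mathbf{u}_i$ and $\lambda_1\le\cdots\le\lambda_n$. Writing an arbitrary $\mathbf{x}\in\mathbb{C}^n$ as $\mathbf{x}=\sum_i c_i\mathbf{u}_i$ with $c_i=\mathbf{u}_i^*\mathbf{x}$, the Rayleigh quotient takes the clean form
\[
R(\mathbf{x}):=\frac{\mathbf{x}^*\mathbf{H}\mathbf{x}}{\mathbf{x}^*\mathbf{x}}=\frac{\sum_i \lambda_i |c_i|^2}{\sum_i |c_i|^2},
\]
which is a convex combination of the eigenvalues with weights $|c_i|^2$. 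This immediately gives the useful monotonicity facts: if $\mathbf{x}\in\mathrm{span}(\mathbf{u}_k,\ldots,\mathbf{u}_n)$ then $R(\mathbf{x})\ge\lambda_k$, and if $\mathbf{x}\in\mathrm{span}(\mathbf{u}_1,\ldots,\mathbf{u}_k)$ then $R(\mathbf{x})\le\lambda_k$.

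For the first identity (max--min), I would establish a two-sided bound. For the lower bound ``$\ge\lambda_k$'', I would make the explicit choice $\mathbf{x}^{(i)}=\mathbf{u}_i$ for $1\le i\le k-1$; then every admissible $\mathbf{x}$ lies in $\mathrm{span}(\mathbf{u}_k,\ldots,\mathbf{u}_n)$, so by the monotonicity remark $R(\mathbf{x})\ge\lambda_k$, with equality at $\mathbf{x}=\mathbf{u}_k$. For the upper bound ``$\le\lambda_k$'', I would use a dimension-counting argument: given any linearly independent $\mathbf{x}^{(1)},\ldots,\mathbf{x}^{(k-1)}$, the subspace $S=\{\mathbf{x}\in\mathbb{C}^n:\mathbf{x}\perp\mathbf{x}^{(i)},\,1\le i\le k-1\}$ has dimension at least $n-k+1$, and $T=\mathrm{span}(\mathbf{u}_1,\ldots,\mathbf{u}_k)$ has dimension $k$; since $\dim S+\dim T\ge n+1$, the intersection $S\cap T$ contains a nonzero vector $\mathbf{x}$, and $R(\mathbf{x})\le\lambda_k$ by the second monotonicity remark. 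Hence the inner minimum is $\le\lambda_k$ for every outer choice, so the max--min is exactly $\lambda_k$.

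For the second identity (min--max) I would argue symmetrically by reversing the roles: take $\mathbf{x}^{(i)}=\mathbf{u}_i$ for $k+1\le i\le n$ to achieve an outer value of $\lambda_k$ (admissible vectors now lie in $\mathrm{span}(\mathbf{u}_1,\ldots,\mathbf{u}_k)$, whose maximum Rayleigh quotient is $\lambda_k$, attained at $\mathbf{u}_k$), and for the reverse inequality intersect the orthogonal complement of any $\{\mathbf{x}^{(k+1)},\ldots,\mathbf{x}^{(n)}\}$ (dimension $\ge k$) with $\mathrm{span}(\mathbf{u}_k,\ldots,\mathbf{u}_n)$ (dimension $n-k+1$) to produce a vector whose Rayleigh quotient is $\ge\lambda_k$. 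Alternatively, the second identity follows at once from the first applied to $-\mathbf{H}$, whose ordered eigenvalues are $-\lambda_n\le\cdots\le-\lambda_1$ and whose $(n-k+1)$-st smallest is $-\lambda_k$.

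The routine parts are the algebra of the Rayleigh quotient and the monotonicity bounds; the only genuinely clever ingredient, and the main obstacle to get right, is the dimension count that guarantees a nonzero vector in $S\cap T$. This relies critically on the linear independence hypothesis on $\{\mathbf{x}^{(1)},\ldots,\mathbf{x}^{(k-1)}\}$ (and its counterpart), so that $\dim S\ge n-k+1$ rather than something smaller; without that one cannot force the inequality $\dim S+\dim T>n$ used above.
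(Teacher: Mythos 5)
The paper does not prove this theorem: it is cited verbatim from Meyer's textbook (\cite{meyer2000matrix}) as a known result, so there is no in-paper argument to compare against. Your proposal is the standard and correct proof of Courant--Fischer via spectral decomposition, the Rayleigh-quotient convexity observation, and the dimension count $\dim S + \dim T \ge n+1 \Rightarrow S\cap T \ne \{0\}$, together with the usual reduction of the min--max form to the max--min form by passing to $-\mathbf H$.

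One small inaccuracy in your closing remark: the linear independence of $\{\mathbf{x}^{(1)},\ldots,\mathbf{x}^{(k-1)}\}$ is not what guarantees $\dim S \ge n-k+1$. The orthogonal complement of the span of $k-1$ vectors always has dimension $n - \dim\bigl(\mathrm{span}\{\mathbf{x}^{(i)}\}\bigr) \ge n-(k-1)$, and dropping independence can only make $S$ \emph{larger}, not smaller; linear independence gives equality $\dim S = n-k+1$. So the dimension count, and hence the upper bound $\le \lambda_k$, goes through regardless. The hypothesis in the statement is there so that the maximization is over genuinely $(k-1)$-dimensional constraint sets (and because equality is achieved at a linearly independent choice), not because the inequality would otherwise fail.
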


\begin{corollary}\label{corollary:max-diag-element}
	Let $\mathbf{H} = (h_{ij}) \in \mathbb{C}^{n\times n}$ be a  Hermitian matrix with eigenvalues $\lambda_1 \leq \dots \leq \lambda_n$. Then $\lambda_n \geq \max\limits_{i \in \{1,2,\dots,n\}}\{h_{ii}\}$
\end{corollary}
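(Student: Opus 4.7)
The plan is to invoke the Courant-Fischer characterization (Theorem \ref{theorem:CF}) in the simplest possible way, namely the Rayleigh quotient upper characterization of the largest eigenvalue. Specializing the first display of Theorem \ref{theorem:CF} to $k=n$ eliminates the orthogonality constraints (the outer max is over the empty set), and leaves the clean identity
\[
\lambda_n \;=\; \max_{\mathbf{x} \in \mathbb{C}^n,\, \mathbf{x} \neq 0} \frac{\mathbf{x}^*\mathbf{H}\mathbf{x}}{\mathbf{x}^*\mathbf{x}}.
\]

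The key step is then to test this variational formula against the standard basis vectors. Fixing $i \in \{1,2,\dots,n\}$ and letting $\mathbf{e}_i \in \mathbb{C}^n$ denote the $i$-th coordinate vector, a direct computation gives $\mathbf{e}_i^*\mathbf{H}\mathbf{e}_i = h_{ii}$ and $\mathbf{e}_i^*\mathbf{e}_i = 1$. Since $\mathbf{e}_i \neq 0$, it is an admissible test vector in the maximization, so
\[
\lambda_n \;\geq\; \frac{\mathbf{e}_i^*\mathbf{H}\mathbf{e}_i}{\mathbf{e}_i^*\mathbf{e}_i} \;=\; h_{ii}.
\]
This inequality holds for every index $i$, so taking the maximum over $i$ yields $\lambda_n \geq \max_i h_{ii}$, which is the claim. (Note also that the diagonal entries $h_{ii}$ of a Hermitian matrix are automatically real, so the maximum is well defined.)

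There is no real obstacle here; the proof is a one-line application of Rayleigh's principle. The only mild subtlety worth mentioning is making sure one invokes the correct direction of Theorem \ref{theorem:CF} (the max-min form with $k = n$ collapses to an unconstrained maximum), but this is standard.
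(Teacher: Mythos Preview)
Your proof is correct and is exactly the argument the paper has in mind: the corollary is stated immediately after Theorem~\ref{theorem:CF} with no separate proof, so the intended derivation is precisely to take $k=n$ in the Courant--Fischer formula and plug in the standard basis vectors $\mathbf{e}_i$.
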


And, finally we provide the well known Gershgorian theorem.

\begin{theorem}[{\cite{gershgorin1931uber}}]
	Every eigenvalue of the $n \times n$ complex matrix $\mathbf{A} = (a_{ij})$ lies within at
	least one of the Gershgorin disks $D_i$, where
	\begin{align*}
		D_i = \{z_i \in \mathbb{C} \mid |z_i-a_{ii}| < \sum_{j \neq i} \vert a_{ij} \vert\}.
	\end{align*}
\end{theorem}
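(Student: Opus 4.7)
The plan is to pick out an eigenpair and locate the eigenvalue by examining the coordinate of the eigenvector that has largest modulus. This is the standard row-maximum argument, and the whole proof reduces to a short calculation once this idea is in hand.

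Concretely, fix an eigenvalue $\lambda$ of $\mathbf{A}$ and a corresponding eigenvector $\mathbf{x} = (x_1,\dots,x_n)^T \neq \mathbf{0}$. First I would choose an index $i \in \{1,\dots,n\}$ with $|x_i| = \max_{1 \le k \le n} |x_k|$; since $\mathbf{x}$ is nonzero, $|x_i| > 0$. The $i$th row of the eigenvalue equation $\mathbf{A}\mathbf{x} = \lambda \mathbf{x}$ reads $\sum_{j=1}^n a_{ij} x_j = \lambda x_i$, which I would rearrange by isolating the diagonal term as
\begin{equation*}
(\lambda - a_{ii})\, x_i \;=\; \sum_{j \neq i} a_{ij}\, x_j.
\end{equation*}

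Next, I would take moduli on both sides, apply the triangle inequality to the right-hand sum, and bound $|x_j| \le |x_i|$ for every $j \neq i$ using the extremal choice of $i$. This gives
\begin{equation*}
|\lambda - a_{ii}|\,|x_i| \;\le\; \sum_{j \neq i} |a_{ij}|\,|x_j| \;\le\; |x_i| \sum_{j \neq i} |a_{ij}|.
\end{equation*}
Dividing both sides by $|x_i|>0$ yields $|\lambda - a_{ii}| \le \sum_{j \neq i}|a_{ij}|$, so $\lambda \in D_i$ (interpreting the disk as its closure, which is how the statement is intended; the strict inequality in the definition would only affect boundary eigenvalues such as those arising from a diagonal matrix).

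The main obstacle is essentially vacuous: the only subtlety is ensuring that the selected coordinate $x_i$ does not vanish, and this is guaranteed automatically by choosing $i$ as the argmax of $|x_k|$ over a nonzero vector. No structural hypothesis on $\mathbf{A}$ (symmetry, normality, diagonal dominance, etc.) enters the argument, so the same proof applies to arbitrary complex matrices and in particular to $\mathbf{L}(\Phi)$ used later in Section~\ref{sec:largest-eig-bounds}.
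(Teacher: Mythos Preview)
Your argument is correct and is precisely the classical row-maximum proof of Gershgorin's theorem; the paper itself does not supply a proof, citing \cite{gershgorin1931uber} instead, so there is nothing to compare against. Your remark about the strict inequality in the stated definition of $D_i$ is also apt: as written the disks are open and would miss boundary eigenvalues, so the intended reading is indeed the closed disks.
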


\section{ Frustration number and Frustration index for gain graphs}   \label{sec:frustration-bounds}

In this section, we define the notions of the frustration number and the frustration index for complex unit gain graphs. We provide bounds for the  above quantities in terms of the smallest eigenvalue of the Laplacian matrix of the complex unit gain graph. 	

For a gain graph $\Phi$, the frustration number
(resp. frustration index), denoted by $\nu(\Phi)$ (resp. $\epsilon(\Phi)$), is the minimum number of vertices (resp. edges) to be deleted such that the resultant gain graph
is balanced. Note that $\nu(\Phi) \leq \epsilon(\Phi)$ is evident.  It is well known that computing the frustration index is an NP-hard problem in general.  Let $0 \leq \lambda_1(\Phi) \leq \lambda_2(\Phi) \leq \ldots \leq \lambda_n(\Phi)$ be the eigenvalues of $\mathbf{L}(\Phi)$.

In the next theorem, we prove that $\lambda_1(\Phi) \leq \nu(\Phi)$. However, before proving it we need some additional notation. If $\Phi$ is unbalanced, then there exists a gain subgraph $S \subset \Phi$, with $|S| = \nu(\Phi)$, such that $\Phi \setminus S$ is balanced. Observe also that in the worst case $\Phi \setminus S = (K_2,\varphi)$, so $|\Phi \setminus S|\geq 2$  and, consequently, $|S| \leq |\Phi|-2$.

The proof of the following theorem is similar to that of  \cite[Theorem 3.2]{ belardo2020balancedness}.
\begin{theorem}\label{frus-bound-main}
	Let $\Phi$ be a $\mathbb{T}$-gain graph of order $n$. Then $\lambda_1(\Phi) \leq \nu(\Phi)$, and $\lambda_1(\Phi) \leq \epsilon(\Phi)$.
\end{theorem}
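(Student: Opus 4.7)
The plan is to build a single explicit test vector and feed it into the Rayleigh quotient characterization of $\lambda_1(\Phi)$ (the $k=1$ case of Theorem \ref{theorem:CF}). First I would let $S \subset V(\Phi)$ be a minimum-size vertex set, $|S| = \nu(\Phi)$, such that $\Phi \setminus S$ is balanced. Since a gain graph is balanced if and only if it is switching equivalent to the all-$1$ gain graph (as recorded in the preliminaries just before \eqref{eq: switching equi}), there exists a switching function $\zeta : V(\Phi) \setminus S \to \mathbb{T}$ with $a_{ij} = \zeta_i\,\overline{\zeta_j}$ for every edge $e_{ij}$ of $\Phi \setminus S$ (using $\zeta_i^{-1} = \overline{\zeta_i}$ for unit-modulus values). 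Using $\zeta$, I would take $\mathbf{x} \in \mathbb{C}^n$ defined by $x_i = \zeta_i$ for $v_i \notin S$ and $x_i = 0$ for $v_i \in S$.

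Next I would evaluate $\mathbf{x}^* \mathbf{L}(\Phi)\mathbf{x}$ via Lemma \ref{lemma:unorm hermitian}. Each edge of $\Phi \setminus S$ contributes $|\zeta_i - \zeta_i\,\overline{\zeta_j}\,\zeta_j|^2 = 0$; each edge with both endpoints in $S$ contributes $0$; and each edge with exactly one endpoint in $S$ contributes $|a_{ij}\,\zeta_j|^2 = 1$ (or $|\zeta_i|^2 = 1$) because every nonzero coordinate has unit modulus. Hence $\mathbf{x}^* \mathbf{L}(\Phi)\mathbf{x} = e(S, V\setminus S)$, the number of edges across the cut. Since $|\zeta_i|=1$ off $S$, we have $\mathbf{x}^*\mathbf{x} = n - \nu(\Phi)$, which is strictly positive because $\nu(\Phi) \leq n-2$ (as observed just before the theorem statement), so $\mathbf{x} \neq 0$ and the Rayleigh quotient is well defined.

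Finally, Courant--Fischer yields
\[
\lambda_1(\Phi) \;\leq\; \frac{\mathbf{x}^*\mathbf{L}(\Phi)\mathbf{x}}{\mathbf{x}^*\mathbf{x}} \;=\; \frac{e(S, V\setminus S)}{n-\nu(\Phi)} \;\leq\; \frac{\nu(\Phi)\bigl(n-\nu(\Phi)\bigr)}{n-\nu(\Phi)} \;=\; \nu(\Phi),
\]
where the middle step uses the trivial cut bound $e(S, V\setminus S) \leq |S|\,(n-|S|)$. This establishes the first inequality. The second inequality $\lambda_1(\Phi) \leq \epsilon(\Phi)$ is then immediate from the elementary comparison $\nu(\Phi) \leq \epsilon(\Phi)$ noted in the paragraph preceding the theorem. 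The only genuinely nontrivial ingredient is the clean identification of the switching function on $V(\Phi) \setminus S$ that reduces the edge sum to a cut count; once that is in hand, the rest is a routine Rayleigh-quotient estimate.
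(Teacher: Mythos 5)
Your argument is correct and mirrors the paper's own proof essentially step for step: the same switching-based test vector supported on $V(\Phi)\setminus S$, the same per-edge case analysis reducing $\mathbf{x}^*\mathbf{L}(\Phi)\mathbf{x}$ to the cut count $e(S, V\setminus S)$, the same trivial bound $e(S, V\setminus S)\leq |S|(n-|S|)$, and the same appeal to $\nu(\Phi)\leq\epsilon(\Phi)$ for the second inequality. No discrepancies to flag.
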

\begin{proof}
	Let $\nu(\Phi)=k$ and $|\Phi|=n$. If so, there exists $S \subset \Phi$ with $|S| = k \leq n-2$ such that $\Phi \setminus S$ is balanced. Let $\zeta$ be the switching function on $\Phi \setminus S$ which switches the balanced gain graph $\Phi \setminus S$ to its underlying graph. Let us define the following vector $\mathbf{x} = (\zeta,0,\dots,0)^T$ such that the length of string of zeroes at tail is $|S| = k$, so $\mathbf{x}^\star\mathbf{x} = |\Phi \setminus S|$. By the definition of switching function, we have $|x_i - a_{ij}x_j|^2 = 0$ whenever $v_i\sim v_j \in E(\Phi \setminus S)$, also $|x_i - a_{ij}x_j|^2 = 1$ for any edge in $E(S,\Phi \setminus S)$ and $|x_i - a_{ij}x_j|^2 = 0$ for all edges in $E(S,S)$. Now,
	\begin{align*}
		\lambda_1(\Phi) &\leq \frac{\sum_{v_i \sim v_j} |x_i-a_{ij} x_j|^2}{\mathbf{x}^\star \mathbf{x}}\\
		&= \frac{\sum_{v_i \sim v_j, v_i \in S, v_j \in \Phi \setminus S} |x_i-a_{ij} x_j|^2}{|\Phi \setminus S|}\\
		&\leq \frac{|S||\Phi \setminus S|}{|\Phi \setminus S|} = k.
	\end{align*}
\end{proof}
The smallest eigenvalue $\lambda_1(\Phi)$ of $\Phi$ being the lower bound for both frustration number $\nu(\Phi)$ and frustration index $\epsilon(\Phi)$ is called the\emph{ algebraic frustration} of $\Phi$.

Note that,  \cite[Theorem 3.1]{belardo2014balancedness}  for the signed graphs, and \cite[Proposition 3.1]{ belardo2020balancedness} for the $\mathbb{T}_4$-gain graphs follow from Theorem \ref{frus-bound-main}.

\section{ Bounds for the least eigenvalue of gain Laplacian}  \label{sec:least-eig-bounds}

We start by providing bounds in terms of the chromatic number of the underlying graph $G$, which are gain dependent as well, and later prove degree dependent bounds at the end of the section.

The lemma below is crucial for the inception of gain dependent quantity, $a_\theta(\Phi)$, which is the gateway to extend the inequality from the \cite[Theorem $2.11$]{de2011smallest} in context of gain graphs.

\begin{definition}\label{def:a-theta-phi}
	For  a unit complex gain graph $\Phi = (G, \varphi)$  with $n$ vertices and $m$ edges, with $m\geq1$, define $a_{\theta}(\Phi) = \frac{1}{m}\bigg(\sum\limits_{\substack{v_i \sim v_j}}(1-\Rea(a_{ij}e^{\mathbf{i}\theta}))\bigg)$. Then $a(\Phi) = a_0(\Phi)$ and $b(\Phi) = a_{-\frac{\pi}{2}}(\Phi)$.
\end{definition}

If $\Phi_1 \sim \Phi_2$, then $a(\Phi_1)$ need not be equal to $a(\Phi_2).$ Consider the complete graph on three vertices, with the gains $\varphi_1$ and $\varphi_2$ are given by the following adjacency matrices:
 $$ A(\Phi_1) = \begin{bmatrix}
 	0 & 1 & 1\\
 	1 & 0 & 1 \\
 	1 & 1 & 0
 \end{bmatrix},$$ and $$ A(\Phi_2) = \begin{bmatrix}
 	0 & 1 & -1\\
 	1 & 0 & -1 \\
 	-1 & -1 & 0
 \end{bmatrix}.$$
 Now, $$  \begin{bmatrix}
 	0 & 1 & 1\\
 	1 & 0 & 1 \\
 	1 & 1 & 0
 \end{bmatrix} = \begin{bmatrix}
 	1 & 0 & 0\\
 	0 & 1 & 0 \\
 	0 & 0 & -1
 \end{bmatrix}\begin{bmatrix}
 	0 & 1 & -1\\
 	1 & 0 & -1 \\
 	-1 & -1 & 0
 \end{bmatrix}\begin{bmatrix}
 	1 & 0 & 0\\
 	0 & 1 & 0 \\
 	0 & 0 & -1
 \end{bmatrix}.$$
 Thus $\Phi_1$ and $\Phi_2$ are switching equivalent, but $a(\Phi_1) = 0$ and $a(\Phi_2) = \frac{4}{3}$.

Given two nonempty subsets $V_1$ and $V_2$ of $V(G)$, let $e(V_1, V_2)$ denote the number edges between  $V_1$ and $V_2$ in $G$.
\begin{lemma}\label{chromres-5}
	Let $\Phi = (G, \varphi)$ be a unit complex gain graph with $n$ vertices and $m$, with $m\geq1$, edges and chromatic number $\chi$. Let  $V_1, V_2,\dots,V_\chi$ be the color classes of the underlying graph of $\Phi$. Then the following holds:
	\begin{align*}
		\lambda_{1}(\Phi) &\leq \frac{2m}{n}\bigg(a(\Phi) - \frac{(a(\Phi)-1)(\gamma^2+1) -2\gamma(a_{\theta}(\Phi)-1)}{\chi + \gamma^2 -1}\bigg),
	\end{align*}
	for all $\gamma \in \mathbb{R}$ and for all $ \theta \in [0,2\pi].$
\end{lemma}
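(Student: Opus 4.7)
The strategy is to adapt the test-vector Rayleigh--quotient argument used for signed graphs in \cite[Theorem 2.11]{de2011smallest} to the complex-unit-gain setting. The engine is the Courant--Fischer theorem (Theorem \ref{theorem:CF}): for any nonzero $\mathbf{x}\in\mathbb{C}^n$,
\[
\lambda_{1}(\Phi)\;\le\;\frac{\mathbf{x}^{*}\mathbf{L}(\Phi)\mathbf{x}}{\mathbf{x}^{*}\mathbf{x}},
\]
and consequently, for any family of nonzero test vectors $\mathbf{x}^{(1)},\dots,\mathbf{x}^{(\chi)}\in\mathbb{C}^n$, the mediant inequality gives
\[
\lambda_{1}(\Phi)\;\le\;\frac{\sum_{k=1}^{\chi}\mathbf{x}^{(k)*}\mathbf{L}(\Phi)\mathbf{x}^{(k)}}{\sum_{k=1}^{\chi}\mathbf{x}^{(k)*}\mathbf{x}^{(k)}}.
\]

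Given the $\chi$-coloring $V_{1},\dots,V_{\chi}$ of $G$, I would introduce one test vector per color class, defined by
\[
x_{v}^{(k)}\;=\;\begin{cases}\gamma e^{-\mathbf{i}\theta}, & v\in V_{k},\\ 1, & v\notin V_{k}.\end{cases}
\]
A direct computation gives $\mathbf{x}^{(k)*}\mathbf{x}^{(k)}=n+(\gamma^{2}-1)|V_{k}|$, and summing over $k$ telescopes the dependence on the individual class sizes into $\sum_{k}\mathbf{x}^{(k)*}\mathbf{x}^{(k)}=n(\chi+\gamma^{2}-1)$, which is exactly the denominator appearing in the claimed bound. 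For the numerator I would expand $\mathbf{x}^{(k)*}\mathbf{L}(\Phi)\mathbf{x}^{(k)}$ via Lemma \ref{lemma:unorm hermitian} and analyse a generic edge $v_{i}v_{j}$ with $i<j$, $v_{i}\in V_{p}$ and $v_{j}\in V_{q}$ (necessarily $p\ne q$, as color classes are independent), splitting into three cases: for $k\notin\{p,q\}$ the contribution is $2-2\Rea(a_{ij})$; for $k=p$ it equals $\gamma^{2}+1-2\gamma\Rea(a_{ij}e^{\mathbf{i}\theta})$; and for $k=q$ it equals $\gamma^{2}+1-2\gamma\Rea(a_{ij}e^{-\mathbf{i}\theta})$. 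Summing the three cases over $k$ and then summing over all edges, and invoking the identities $\sum_{v_{i}\sim v_{j}}(1-\Rea(a_{ij}))=m\,a(\Phi)$ and $\sum_{v_{i}\sim v_{j}}(1-\Rea(a_{ij}e^{\mathbf{i}\theta}))=m\,a_{\theta}(\Phi)$, a routine algebraic rearrangement should collapse the ratio into the form stated in the lemma.

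The main obstacle is the edge-by-edge bookkeeping described above: because $a_{ji}=\overline{a_{ij}}$ differs from $a_{ij}$ when the gain is not real, the $k=p$ and $k=q$ contributions are \emph{not} equal --- they involve $\Rea(a_{ij}e^{\mathbf{i}\theta})$ and $\Rea(a_{ij}e^{-\mathbf{i}\theta})$ respectively --- so one must carefully track the $i<j$ convention from Lemma \ref{lemma:unorm hermitian} when combining them. This is the genuinely new feature compared with the signed-graph argument, in which $\sigma_{ij}=\pm 1$ forces the two ``special'' contributions to coincide, and it is what is responsible for the $-2\gamma(a_{\theta}(\Phi)-1)$ correction in the claimed numerator. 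Once this step is handled, the remaining algebra parallels the closing computation of \cite[Theorem 2.11]{de2011smallest}.
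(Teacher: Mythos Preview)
Your strategy---one Rayleigh test vector per colour class, then sum the $\chi$ resulting inequalities---is exactly the paper's, and your test vectors agree with the paper's up to a global scalar (the paper takes $a$ on $V_k$ and $be^{\mathbf i\theta}$ off it, which is your vector multiplied by $be^{\mathbf i\theta}$). You have also put your finger on the one genuinely new feature: for an edge $v_iv_j$ with $i<j$, $v_i\in V_p$, $v_j\in V_q$, the $k=p$ contribution carries $\Rea(a_{ij}e^{\mathbf i\theta})$ whereas the $k=q$ contribution carries $\Rea(a_{ij}e^{-\mathbf i\theta})$.

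Where your plan (and, in fact, the paper's own argument) falls short is in treating this as mere bookkeeping. Adding the two special contributions gives
\[
\Rea(a_{ij}e^{\mathbf i\theta})+\Rea(a_{ij}e^{-\mathbf i\theta})=2\cos\theta\,\Rea(a_{ij}),
\]
so after summing over all $k$ and all edges the numerator of your mediant is
\[
2m\bigl[(\chi-2)\,a(\Phi)+\gamma^{2}+1+2\gamma\cos\theta\,(a(\Phi)-1)\bigr],
\]
and \emph{not} $2m\bigl[(\chi-2)\,a(\Phi)+\gamma^{2}+1+2\gamma(a_{\theta}(\Phi)-1)\bigr]$. The imaginary parts of the gains cancel pairwise in the sum over $k$, and with them the $-\sin\theta\,(b(\Phi)-1)$ portion of $a_\theta(\Phi)-1=\cos\theta\,(a(\Phi)-1)-\sin\theta\,(b(\Phi)-1)$; no amount of careful tracking of the $i<j$ convention can restore it, because this family of test vectors simply does not see $\Ima(a_{ij})$ once the $\chi$ quotients are added. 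The paper makes the identical slip at its ``By adding these inequalities'' line, writing $2m\,a_\theta(\Phi)$ for a sum that in fact equals $2m[1+\cos\theta(a(\Phi)-1)]$. What the argument genuinely establishes is the bound with $\cos\theta\,(a(\Phi)-1)$ in place of $a_\theta(\Phi)-1$; this coincides with the stated lemma precisely when $\sin\theta\,(b(\Phi)-1)=0$, and in particular it still yields Corollary~\ref{chromres-1}.
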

\begin{proof}
	For $1\leq k \leq \chi$, define the vector $\mathbf{x}_k^T = (x_1,\dots,x_n)$ as follows:
	$$ x_i =  \begin{cases}
	a\quad \text{if} \ i \in V_k, \\
	be^{\mathbf{i}\theta} \quad \text{otherwise,}
	\end{cases} $$
	where $b \neq 0$. It is easy to see  that $||\mathbf{x}_k||^2 = (a^2-b^2)|V_k| + nb^2$, and
	$$	\sum_{v_i \sim v_j} |x_i - a_{ij}x_j|^2 = \sum_{v_i \sim v_j,v_i\in V_k,v_j \notin V_k} |a - a_{ij}e^{\mathbf{i}\theta}b|^2 +  \sum_{v_i \sim v_j,v_i\notin V_k,v_j \notin V_k} b^2|1 - a_{ij}|^2. $$
	
	Now, whenever $a_{ij} \neq 0$, we have $	|-1 + a_{ij}|^2 = 2(1-\Rea(a_{ij}))$, and
	\begin{align*}
		|a - a_{ij}e^{\mathbf{i}\theta}b|^2 &= (a-b)^2 + 2ab(1 - \Rea(a_{ij}e^{\mathbf{i}\theta})).
	\end{align*}
	Thus							
	$$	\sum_{v_i \sim v_j} |x_i - a_{ij}x_j|^2 = \sum_{v_i \sim v_j,v_i\in V_k,v_j \notin V_k} \big((a-b)^2 + 2ab(1 - \Rea(a_{ij}e^{\mathbf{i}\theta}))\big) + \sum_{v_i \sim v_j,v_i\notin V_k,v_j \notin V_k} 2b^2(1-\Rea(a_{ij})) , $$ and hence
	\begin{gather*}	
		\sum_{v_i \sim v_j} |x_i - a_{ij}x_j|^2 = (a-b)^2 e(V_k,V\setminus V_k) + \sum_{v_i \sim v_j,v_i\in V_k,v_j \notin V_k}2ab(1-\Rea(a_{ij}e^{\mathbf{i}\theta})) \\+ \sum_{v_i \sim v_j,v_i\notin V_k,v_j \notin V_k} 2b^2(1-\Rea(a_{ij})).
	\end{gather*}	
	
	Since $$\sum_{v_i \sim v_j,v_i\notin V_k,v_j \notin V_k} (1-\Rea(a_{ij})) = ma(\Phi) - \sum_{v_i \sim v_j,v_i\in V_k,v_j \notin V_k}(1-\Rea(a_{ij})),$$ we have,
	\begin{gather} \label{main-eqn1}
		\sum_{v_i \sim v_j} |x_i - a_{ij}x_j|^2 = (a-b)^2 e(V_k,V\setminus V_k) + 2ab\sum_{v_i \sim v_j,v_i\in V_k,v_j \notin V_k}(1-\Rea(a_{ij}e^{\mathbf{i}\theta})) \\ \nonumber - 2b^2\sum_{v_i \sim v_j,v_i\in V_k,v_j \notin V_k}(1-\Rea(a_{ij})) + 2b^2ma(\Phi).
	\end{gather}

	Thus, for every $k = 1,\dots,\chi$, we have
	\begin{align}
		((a^2-b^2)|V_k| + nb^2)\lambda_1(\Phi) \leq (a-b)^2 e(V_k,V\setminus V_k) + 2ab\sum_{v_i \sim v_j,v_i\in V_k,v_j \notin V_k}(1-\Rea(a_{ij}e^{\mathbf{i}\theta}))\\ \nonumber - 2b^2\sum_{v_i \sim v_j,v_i\in V_k,v_j \notin V_k}(1-\Rea(a_{ij})) + 2b^2ma(\Phi).
	\end{align}
	By adding these inequalities, we get
	\begin{align*}
		((a^2-b^2)n + n\chi b^2)\lambda_1(\Phi) &\leq (a-b)^2 (2m) + (2ab)2ma_{\theta}(\Phi) - (2b^2)2ma(\Phi) + 2b^2ma(\Phi)\chi
	\end{align*}
	which is equivalent to
	\begin{align*}
		\lambda_{1}(\Phi) &\leq \frac{2m}{n}\bigg(\frac{(a-b)^2 +(2ab)a_{\theta}(\Phi) + a(\Phi)(\chi-2)b^2)}{a^2 + b^2(\chi -1)}\bigg).
	\end{align*}
	Now, since the inequality is homogenous in $a,b$, so let $a = \gamma b$, then simplifying we obtain,
	$$
	\lambda_{1}(\Phi) \leq \frac{2m}{n}\bigg(a(\Phi) - \frac{(a(\Phi)-1)(\gamma^2+1) -2\gamma(a_{\theta}(\Phi)-1)}{\chi + \gamma^2 -1}\bigg).
	\qedhere $$
\end{proof}

By plugging $\theta =0$ in Lemma \ref{chromres-5} we obtain the following corollary.
\begin{corollary}\label{chromres-1}
	Let $\Phi = (G, \varphi)$ be a unit complex gain graph with $n$ vertices and $m$ edges and chromatic number $\chi$. Let  $V_1, V_2,\dots,V_\chi$ be the color classes of the underlying graph of $\Phi$. Then the following holds:
	\begin{align*}
		\lambda_{1}(\Phi) &\leq \frac{2m}{n}\Bigg(a(\Phi) - \frac{(\gamma-1)^2(a(\Phi)-1)}{\chi + \gamma^2-1}\Bigg),
	\end{align*}
	for any real number $ \gamma.$
\end{corollary}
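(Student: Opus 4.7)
The plan is to specialize Lemma \ref{chromres-5} at $\theta = 0$ and simplify. By Definition \ref{def:a-theta-phi}, we have $a_0(\Phi) = a(\Phi)$, so when we substitute $\theta = 0$ into the right-hand side of Lemma \ref{chromres-5}, the quantity $a_\theta(\Phi)-1$ in the numerator of the fraction becomes $a(\Phi)-1$, matching the other factor already present.

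The key observation is then purely algebraic: the numerator of the subtracted fraction collapses via
\begin{align*}
(a(\Phi)-1)(\gamma^2+1) - 2\gamma(a(\Phi)-1) &= (a(\Phi)-1)\bigl(\gamma^2 - 2\gamma + 1\bigr) \\
&= (a(\Phi)-1)(\gamma-1)^2,
\end{align*}
so that the bound in Lemma \ref{chromres-5} becomes exactly
\[
\lambda_{1}(\Phi) \leq \frac{2m}{n}\Bigg(a(\Phi) - \frac{(\gamma-1)^2(a(\Phi)-1)}{\chi + \gamma^2-1}\Bigg),
\]
which is the claimed inequality. The denominator $\chi + \gamma^2 - 1$ is inherited unchanged from Lemma \ref{chromres-5}, and is positive for $\chi \geq 1$ and any real $\gamma$ (with $\gamma \neq 0$ when $\chi = 1$, but in that case $\Phi$ has no edges so the statement is vacuous). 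There is no genuine obstacle here; the corollary is simply the specialization that removes the imaginary-part contribution and exhibits the bound in a form that makes its dependence on the single real parameter $\gamma$ transparent.
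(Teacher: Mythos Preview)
Your proposal is correct and matches the paper's approach exactly: the paper states only ``By plugging $\theta =0$ in Lemma \ref{chromres-5} we obtain the following corollary,'' and your write-up simply makes the resulting algebraic simplification $(a(\Phi)-1)(\gamma^2+1) - 2\gamma(a(\Phi)-1) = (a(\Phi)-1)(\gamma-1)^2$ explicit.
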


\begin{theorem}\label{chromres-2}
	In the Corollary \ref{chromres-1}, the optimal bound for $\lambda_{1}(\Phi)$ is given by
	\begin{gather*}
		\lambda_{1}(\Phi) \leq \begin{cases}
			\frac{2m}{n}a(\Phi)\quad \text{if} \ 0 \leq a(\Phi) \leq 1, \\
			\frac{2m}{n}\bigg(1 - \frac{a(\Phi)-1}{\chi-1}\bigg) \quad 1 < a(\Phi) \leq 2.
		\end{cases}
	\end{gather*}
\end{theorem}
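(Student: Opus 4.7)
The plan is to view the right-hand side of Corollary \ref{chromres-1} as a function of the free parameter $\gamma \in \mathbb{R}$ and optimize it. Define
\[
f(\gamma) \;=\; a(\Phi) - \frac{(\gamma-1)^2\bigl(a(\Phi)-1\bigr)}{\chi + \gamma^2 - 1}, \qquad g(\gamma) \;=\; \frac{(\gamma-1)^2}{\chi + \gamma^2 - 1}.
\]
Note that $\chi \geq 2$ whenever $m \geq 1$, so the denominator $\chi + \gamma^2 - 1$ is strictly positive, and $g$ is well defined and nonnegative on all of $\mathbb{R}$. Since the sign of $a(\Phi) - 1$ controls whether minimizing $f$ corresponds to minimizing or maximizing $g$, I split into the two cases in the statement.

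For the case $0 \leq a(\Phi) \leq 1$, one has $a(\Phi) - 1 \leq 0$, so $f(\gamma) = a(\Phi) + |a(\Phi)-1|\,g(\gamma)$, which is minimized exactly when $g(\gamma)$ is minimized. Since $g \geq 0$ with equality at $\gamma = 1$, the optimal choice is $\gamma = 1$, yielding $f(1) = a(\Phi)$ and hence $\lambda_1(\Phi) \leq \tfrac{2m}{n} a(\Phi)$.

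For the case $1 < a(\Phi) \leq 2$, we have $a(\Phi) - 1 > 0$, so minimizing $f$ is equivalent to maximizing $g$. A routine derivative computation gives
\[
g'(\gamma) \;=\; \frac{2(\gamma-1)\bigl[\chi - 1 + \gamma\bigr]}{(\chi + \gamma^2 - 1)^2},
\]
so the critical points are $\gamma = 1$ (a minimum with $g(1) = 0$) and $\gamma = 1 - \chi$. Since $g(\gamma) \to 1$ as $\gamma \to \pm\infty$ and $g(1-\chi) = \tfrac{\chi^2}{\chi^2 - \chi} = \tfrac{\chi}{\chi - 1} > 1$, the global maximum is attained at $\gamma = 1 - \chi$. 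Substituting into $f$ and simplifying gives $f(1-\chi) = \tfrac{\chi - a(\Phi)}{\chi - 1} = 1 - \tfrac{a(\Phi)-1}{\chi - 1}$, which matches the claimed bound.

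The only mild obstacle is bookkeeping: one must verify that $\gamma = 1 - \chi$ is indeed a global (not just local) maximum of $g$ on all of $\mathbb{R}$, which is handled by the asymptotic check $g(\pm\infty) = 1 < \tfrac{\chi}{\chi-1}$, and check that the degenerate case $\chi = 1$ does not arise (it would force $m = 0$, which is excluded). Everything else is direct substitution.
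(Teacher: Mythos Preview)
Your proof is correct and follows essentially the same approach as the paper: both optimize the bound from Corollary~\ref{chromres-1} over $\gamma$ via calculus, locating the critical points $\gamma=1$ and $\gamma=1-\chi$ and selecting the appropriate one according to the sign of $a(\Phi)-1$. Your argument is in fact slightly more careful than the paper's, since you explicitly verify global (not just local) optimality by checking the asymptotics $g(\pm\infty)=1<\tfrac{\chi}{\chi-1}$ and note the nondegeneracy condition $\chi\ge 2$.
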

\begin{proof}
	Let us minimize the expression $\Big(a(\Phi) - \frac{(\gamma-1)^2(a(\Phi)-1)}{\chi + \gamma^2-1}\Big)$ with respect to $\gamma$. By the first order conditions we get
	\begin{align*}
		\frac{d}{d\gamma}\Bigg(a(\Phi) - \frac{(\gamma-1)^2(a(\Phi)-1)}{\chi + \gamma^2-1}\Bigg) &= 0, \\
		\implies		\frac{2(\gamma-1)}{\chi + \gamma^2-1} - \frac{2\gamma(\gamma-1)^2}{(\chi + \gamma^2-1)^2} &= 0 ,\\
		\implies		(\gamma-1)(\gamma + \chi -1) &= 0.
	\end{align*}
	Thus
	\begin{gather*}
		\gamma = \begin{cases}
			1 \quad \text{if} \ 0 \leq a(\Phi) \leq 1, \\
			-(\chi-1) \quad 1 < a(\Phi) \leq 2,
		\end{cases}
	\end{gather*}and the result follows.
\end{proof}
Note that  \cite[Theorem $2.11$]{de2011smallest} is a particular case of Theorem \ref{chromres-2}.
\begin{corollary}
	Let $G$ be a graph with $n$ vertices,  $m$ edges and chromatic number $\chi$. Then
	$$ \lambda_{1}(-) \leq \frac{2m}{n}\bigg(1-\frac{1}{\chi -1}\bigg), $$ where $\lambda_{1}(-) $ denote the least eigenvalue of the signless Laplacian.
\end{corollary}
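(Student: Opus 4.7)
The plan is to apply Theorem \ref{chromres-2} directly to the specific gain graph $\Phi = (G, -)$, for which every edge gain equals $-1$, and then identify $\mathbf{L}(\Phi)$ with the signless Laplacian $\mathbf{Q}(G)$ so that $\lambda_1(\Phi) = \lambda_1(-)$.

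First I would verify that $\mathbf{L}((G,-)) = \mathbf{Q}(G)$. Since every $a_{ij} = -1$, the adjacency matrix of $(G,-)$ is $-\mathbf{A}(G)$, so $\mathbf{L}((G,-)) = \mathbf{D}(G) - (-\mathbf{A}(G)) = \mathbf{D}(G) + \mathbf{A}(G) = \mathbf{Q}(G)$. In particular the smallest eigenvalue of $\mathbf{L}((G,-))$ is exactly $\lambda_1(-)$ as defined in the preliminaries.

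Next I would compute $a(\Phi)$ for $\Phi = (G,-)$. From Definition \ref{def:a-theta-phi} (with $\theta=0$), since $\operatorname{Re}(a_{ij}) = -1$ for every edge, we obtain
\[
a(\Phi) \;=\; \frac{1}{m}\sum_{v_i \sim v_j}\bigl(1 - \operatorname{Re}(a_{ij})\bigr) \;=\; \frac{1}{m}\sum_{v_i \sim v_j} 2 \;=\; 2.
\]
Thus $a(\Phi)$ falls into the second regime $1 < a(\Phi) \leq 2$ of Theorem \ref{chromres-2}.

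Finally, substituting $a(\Phi) = 2$ into the second branch of the conclusion of Theorem \ref{chromres-2} yields
\[
\lambda_1(-) \;=\; \lambda_1(\Phi) \;\leq\; \frac{2m}{n}\Bigl(1 - \frac{2-1}{\chi-1}\Bigr) \;=\; \frac{2m}{n}\Bigl(1 - \frac{1}{\chi-1}\Bigr),
\]
which is exactly the claimed bound. There is no genuine obstacle here; the corollary is a direct specialization of the previous theorem, and the only thing to be careful about is matching the sign convention so that $\mathbf{L}((G,-))$ really is the signless Laplacian, and noting that the optimal choice $\gamma = -(\chi-1)$ in the proof of Theorem \ref{chromres-2} is precisely what activates the relevant branch.
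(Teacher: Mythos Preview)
Your proposal is correct and is exactly the intended argument: the paper presents this corollary immediately after Theorem \ref{chromres-2} as a direct specialization, and your computation $a((G,-))=2$ together with the identification $\mathbf{L}((G,-))=\mathbf{Q}(G)$ is precisely what is needed to land in the second branch of that theorem.
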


In the Theorem \ref{chromres-2}, the bound only depends on the real part of the gain $\Phi$, whose performance is poor when $a(\Phi) = 1$, as can be seen from Table \ref{tab:bipartite-lambda-bounds}. The next theorem provides a more complete bound which uses both the real and imaginary components of the gain.


\begin{theorem}\label{chro-ima}
	In the Lemma \ref{chromres-5}, the optimal bound for $\lambda_{1}(\Phi)$ is given as
	\begin{align}\label{eq:chromres-5-opt}
		\lambda_{1}(\Phi) &\leq \frac{m}{n}\Bigg(a(\Phi) +1- \frac{a(\Phi)-1}{\chi-1} - \frac{\sqrt{\chi^2(a(\Phi)-1)^2  + 4 (\chi-1)(b(\Phi)-1)^2}}{\chi-1}\Bigg).
	\end{align}
\end{theorem}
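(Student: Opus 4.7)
The plan is to minimize the right-hand side of the inequality in Lemma \ref{chromres-5} over the two free parameters $\gamma\in\mathbb{R}$ and $\theta\in[0,2\pi]$, which amounts to maximizing
\[
F(\gamma,\theta) \;=\; \frac{(a(\Phi)-1)(\gamma^2+1) - 2\gamma(a_\theta(\Phi)-1)}{\chi-1+\gamma^2}.
\]
I would attack this in two stages: first over $\theta$ with $\gamma$ held fixed, and then over $\gamma$.

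For the $\theta$-stage, expanding $\Rea(a_{ij}e^{\mathbf{i}\theta}) = \Rea(a_{ij})\cos\theta - \Ima(a_{ij})\sin\theta$ and averaging over the edges, using $\tfrac{1}{m}\sum \Rea(a_{ij}) = 1 - a(\Phi)$ and $\tfrac{1}{m}\sum \Ima(a_{ij}) = 1 - b(\Phi)$ (the latter coming from Definition \ref{def:a-theta-phi} at $\theta=-\pi/2$), yields
\[
a_\theta(\Phi) - 1 \;=\; (a(\Phi)-1)\cos\theta - (b(\Phi)-1)\sin\theta.
\]
By Cauchy--Schwarz this expression ranges over $[-B,B]$ with $B=\sqrt{(a(\Phi)-1)^2+(b(\Phi)-1)^2}$, and both endpoints are attained. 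For fixed $\gamma$, choosing $\theta$ so that $a_\theta(\Phi)-1$ has sign opposite to $\gamma$ and absolute value $B$ replaces the cross term $-2\gamma(a_\theta(\Phi)-1)$ by $2|\gamma|B$, reducing the remaining task to maximizing
\[
h(u) \;=\; \frac{(a(\Phi)-1)(u^2+1) + 2uB}{\chi-1+u^2}
\]
over $u=|\gamma|\geq 0$.

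For the $\gamma$-stage I would prefer the value/discriminant trick over direct differentiation. Writing $p=a(\Phi)-1$, $q=b(\Phi)-1$, $c=\chi-1$ and setting $t=h(u)$, the defining identity rearranges to the quadratic $(p-t)u^2 + 2Bu + (p-tc) = 0$ in $u$, which has a real root exactly when $B^2 \geq (p-t)(p-tc)$. Expanding $(p-t)(p-tc) = p^2 - p(c+1)t + ct^2$ and using $B^2 = p^2 + q^2$, this collapses to the clean inequality
\[
c\,t^2 \;-\; p(c+1)\,t \;-\; q^2 \;\leq\; 0,
\]
whose larger root is
\[
t^* \;=\; \frac{p(c+1) + \sqrt{p^2(c+1)^2 + 4c\,q^2}}{2c} \;=\; \frac{\chi(a(\Phi)-1) + \sqrt{\chi^2(a(\Phi)-1)^2 + 4(\chi-1)(b(\Phi)-1)^2}}{2(\chi-1)}.
\]
A short check (squaring the candidate $t^*\geq p$ reduces to $4c(p^2+q^2)\geq 0$) shows the accompanying double root $u^*=B/(t^*-p)$ is non-negative, so the constraint $u\geq 0$ is inactive and $t^*$ is indeed the supremum of $h$.

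Substituting $t^*$ into $\tfrac{2m}{n}(a(\Phi)-t^*)$ and simplifying via the elementary identity $2a(\Phi) - \tfrac{\chi(a(\Phi)-1)}{\chi-1} = a(\Phi)+1 - \tfrac{a(\Phi)-1}{\chi-1}$ produces the stated bound. The main hazard I anticipate is bookkeeping in the expansion of $a_\theta(\Phi)-1$: a sign error in the $\sin\theta$ term would spoil the simplification of the radicand. I would cross-check the final formula by specializing to $b(\Phi)=1$: then $B=|a(\Phi)-1|$, the square root collapses to $\chi\,|a(\Phi)-1|$, and splitting into the cases $a(\Phi)\leq 1$ and $a(\Phi)>1$ recovers the two branches of Theorem \ref{chromres-2}.
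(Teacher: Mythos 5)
Your proof is correct and yields the paper's stated bound. The paper uses the same two-stage optimization (over $\theta$, then over $\gamma$) but does both by calculus: it solves $\tan\theta^* = -(b(\Phi)-1)/(a(\Phi)-1)$, then solves a quadratic first-order condition $\gamma^2+(\chi-2)\cos\theta^*\,\gamma-(\chi-1)=0$, and finally back-substitutes that $\gamma$ into the bound, a step it simply asserts rather than carries out. Your route replaces the $\theta$-calculus with the observation that $a_\theta(\Phi)-1$ is a sinusoid of amplitude $B=\sqrt{(a(\Phi)-1)^2+(b(\Phi)-1)^2}$, and replaces the $\gamma$-calculus with the discriminant method, which hands you the extremal value of the quotient directly without ever solving for $\gamma$; that sidesteps the paper's heaviest algebra, and the remaining simplification is just the short identity $2a(\Phi)-\chi(a(\Phi)-1)/(\chi-1)=a(\Phi)+1-(a(\Phi)-1)/(\chi-1)$. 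You also explicitly verify that the optimizing $u=|\gamma|$ is non-negative, a check the paper omits. Both routes arrive at the same final inequality, so this is the same decomposition with a tidier execution, not a genuinely different proof strategy.
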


\begin{proof}
	Let us simplify $a_{\theta}(\Phi)$ defined in Definition \ref{def:a-theta-phi} to obtain an explicit bound in terms of $\theta$, thus making it suitable for direct optimization with respect to $\gamma$.
	Let $a_{ij}=(x_{ij}+\mathbf{i}y_{ij})$, we obtain
	$$\Rea(a_{ij}e^{\mathbf{i}\theta}) = \Rea((x_{ij}+\mathbf{i}y_{ij})(\cos(\theta) +\mathbf{i}\sin(\theta))) = x_{ij}\cos\theta - y_{ij}\sin\theta. $$Substituting, we get
	\begin{align*}
		a_{\theta}(\Phi) &= \frac{1}{m}\bigg(\sum\limits_{v_i \sim v_j}(1-x_{ij}\cos\theta + y_{ij}\sin\theta)\bigg) \\ &= \frac{1}{m}\bigg(\sum\limits_{v_i \sim v_j}(1-\cos\theta + \sin\theta +(1-x_{ij})\cos\theta - \sin\theta (1-y_{ij}))\bigg)
	\end{align*}
and hence

$$		a_{\theta}(\Phi)-1 = \cos\theta(a(\Phi)-1) - \sin\theta(b(\Phi)-1).$$
Thus we have,
	\begin{align*}
		\lambda_{1}(\Phi) \leq \frac{2m}{n}\bigg(a(\Phi) - \frac{(a(\Phi)-1)(\gamma^2+1-2\gamma \cos\theta) +2\gamma \sin\theta(b(\Phi)-1)}{\chi + \gamma^2 -1}\bigg).
	\end{align*}
	Minimizing the expression with respect to $\theta$, we get
	\begin{gather*}
		(a(\Phi)-1) 2\gamma \sin\theta + (b(\Phi)-1) 2\gamma \cos\theta =0 \\
		\Rightarrow \tan\theta = -\bigg(\frac{b(\Phi)-1}{a(\Phi)-1}\bigg).
	\end{gather*}
	Let the optimal value of $\theta$ be denoted as $\theta^*$. Substituting the value of $\theta^*$ gives us
	\begin{align*}
		\lambda_{1}(\Phi) \leq \frac{2m}{n}\bigg(a(\Phi) - \frac{(a(\Phi)-1)(\gamma^2+1)-2\gamma\sqrt{(a(\Phi)-1)^2+(b(\Phi)-1)^2}}{\chi + \gamma^2 -1}\bigg).
	\end{align*}
	Now, minimizing with respect to $\gamma$, gives us
	\begin{gather*}
		\gamma^2 + (\chi-2)\cos\theta^*\gamma - (\chi-1) = 0 \\
		\Rightarrow \gamma = \frac{-(\chi-2)\cos\theta^* \pm \sqrt{(\chi-2)^2\cos^2\theta^* + 4(\chi-1)}}{2},
	\end{gather*}
	substituting the value of $\gamma$, the bound evaluates to
$$
		\lambda_{1}(\Phi) \leq \frac{m}{n}\Bigg(a(\Phi) +1- \frac{a(\Phi)-1}{\chi-1} - \frac{\sqrt{((a(\Phi)-1)^2 (\chi)^2 + 4 (b(\Phi)-1)^2 (\chi-1))}}{\chi-1}\Bigg). \qedhere
$$
\end{proof}

\begin{remark}
{\rm 	In Theorem \ref{chro-ima}, the case $\gamma = 0$ has been omitted. Because in this case the bound evaluates to
	\begin{align*}
	\lambda_{1}(\Phi) \leq \frac{2m}{n}\Bigg(a(\Phi) - \frac{a(\Phi)-1}{\chi-1}\Bigg),
	\end{align*}which is not better than the bound obtained in Theorem \ref{chromres-2}, and therefore $\gamma = 0$ doesn't correspond to the minimum solution.}
\end{remark}
The optimized bounds obtained in the Lemma \ref{chromres-1} and Theorem \ref{chromres-2}, can be optimized further for bipartite gains, which is done below, in the other theorems.

\begin{lemma}\label{chromres-3}
	Let $\Phi = (G, \varphi)$ be a bipartite unit complex gain graph with $n$ vertices and $m$ edges. Let  $V_1, V_2$ be the color classes of the underlying bipartite graph of $\Phi$. Then the following holds:
	\begin{align*}
		\	\lambda_{1}(\Phi) &\leq \frac{m}{|V_1|}\bigg(\frac{(\gamma-1)^2 + 2\gamma a_{\theta}(\Phi)}{\gamma^2 + \frac{|V_2|}{|V_1|}}\bigg)
	\end{align*}
	for all $ \gamma \in \mathbb{R}.$
\end{lemma}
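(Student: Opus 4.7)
The plan is to mirror the test-vector construction used in Lemma \ref{chromres-5}, but to exploit the bipartite structure so that the sum over within-class edges vanishes entirely. Let $V_1, V_2$ be the color classes. I would define a vector $\mathbf{x}\in\mathbb{C}^n$ by setting $x_i = a$ for $i\in V_1$ and $x_i = b e^{\mathbf{i}\theta}$ for $i\in V_2$, where $a,b\in\mathbb{R}$ with $b\neq 0$. A direct computation then yields $\mathbf{x}^*\mathbf{x} = a^2 |V_1| + b^2 |V_2|$.

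Next I would apply Lemma \ref{lemma:unorm hermitian} to evaluate $\mathbf{x}^*\mathbf{L}(\Phi)\mathbf{x}$. Because $\Phi$ is bipartite, every edge has one endpoint in $V_1$ and the other in $V_2$, so only terms of the form $|a - a_{ij} e^{\mathbf{i}\theta} b|^2$ appear. Using the identity $|a - a_{ij} e^{\mathbf{i}\theta} b|^2 = (a-b)^2 + 2ab\bigl(1 - \Rea(a_{ij} e^{\mathbf{i}\theta})\bigr)$ already exploited in the proof of Lemma \ref{chromres-5}, and summing over all $m$ edges, I obtain
\[
\mathbf{x}^*\mathbf{L}(\Phi)\mathbf{x} \;=\; m(a-b)^2 \,+\, 2ab \sum_{v_i \sim v_j}\bigl(1 - \Rea(a_{ij} e^{\mathbf{i}\theta})\bigr) \;=\; m(a-b)^2 + 2ab\, m\, a_{\theta}(\Phi),
\]
where the last equality uses Definition \ref{def:a-theta-phi}.

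By Theorem \ref{theorem:CF} (or equivalently the Rayleigh quotient characterization of the smallest eigenvalue of a Hermitian matrix), this gives
\[
\lambda_1(\Phi) \;\leq\; \frac{\mathbf{x}^*\mathbf{L}(\Phi)\mathbf{x}}{\mathbf{x}^*\mathbf{x}} \;=\; \frac{m\bigl((a-b)^2 + 2ab\, a_{\theta}(\Phi)\bigr)}{a^2|V_1| + b^2|V_2|}.
\]
Finally, I would use the homogeneity of this ratio in $(a,b)$: set $a = \gamma b$, cancel the common factor $b^2$, and then factor $|V_1|$ out of the denominator to obtain the stated form. The proof is essentially a simplified instance of the Lemma \ref{chromres-5} argument, so I do not anticipate any real obstacle; the only thing to be careful about is allowing $\gamma\in\mathbb{R}$ arbitrary (including $\gamma = 0$ and negative $\gamma$), which is automatic since the Rayleigh quotient inequality is valid for any nonzero $\mathbf{x}$.
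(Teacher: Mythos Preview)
Your proposal is correct and follows essentially the same route as the paper's own proof: the paper likewise takes the test vector constant equal to $a$ on $V_1$ and $be^{\mathbf{i}\theta}$ on $V_2$, notes that bipartiteness kills the within-class sum so that only the cross terms $(a-b)^2 + 2ab(1-\Rea(a_{ij}e^{\mathbf{i}\theta}))$ survive, applies the Rayleigh quotient, and then substitutes $a=\gamma b$. The only cosmetic difference is that the paper phrases this as specializing Equation~\eqref{main-eqn1} from Lemma~\ref{chromres-5} to $\chi=2$, $k=1$, whereas you redo the short computation directly.
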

\begin{proof}

	Let $\chi =2$. For $k=1$, Equation (\ref{main-eqn1}) in Lemma \ref{chromres-5} specializes as follows:
$$
		\sum_{v_i \sim v_j} |x_i - a_{ij}x_j|^2 = (a-b)^2 e(V_1, V_2) + \sum_{v_i \sim v_j,v_i\in V_1,v_j \in V_2}2ab(1-\Rea(a_{ij}e^{\mathbf{i}\theta}))
 $$
 which is equivalent to
 $$
		\sum_{v_i \sim v_j} |x_i - a_{ij}x_j|^2 = m(a-b)^2 + 2abma_{\theta}(\Phi).
$$
	
	Thus, we have
	\begin{align*}
		\lambda_{1}(\Phi) &\leq \bigg(\frac{m(a-b)^2 + 2abma_{\theta}(\Phi)}{|V_1|a^2 + |V_2|b^2}\bigg).
	\end{align*}
	Now, since the inequality is homogenous in $a,b$, and $b$ is non-zero, so let $a = \gamma b$, then simplifying we obtain,
$$
		\lambda_{1}(\Phi)  \leq \frac{m}{|V_1|}\bigg(\frac{(\gamma-1)^2 + 2\gamma a_{\theta}(\Phi)}{\gamma^2 + \frac{|V_2|}{|V_1|}}\bigg). \qedhere
$$

\end{proof}

\begin{theorem}\label{bipart-refine}
	
	{
		In the Lemma \ref{chromres-3}, the optimal bound for $\lambda_1(\Phi)$ is given by
		%
	
		{\large \begin{align}\label{eq:bipartite-opt}
		\lambda_1(\Phi) \leq
		\begin{cases}
		\frac{m}{|V_2|}, \quad \text{if} \quad a_{\theta}(\Phi) = 1,  \\
		\frac{m}{2}\Bigg(\frac{n-\sqrt{n^2-4a_{\theta}(\Phi)(2-a_{\theta}(\Phi))|V_1||V_2|}}{|V_1||V_2|}\Bigg),  \quad \text{if} \quad a_{\theta}(\Phi) \neq 1.
		\end{cases}
		\end{align}}}

\end{theorem}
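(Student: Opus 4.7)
The plan is to treat the bound from Lemma \ref{chromres-3} as a function of the single real parameter $\gamma$ and minimize it. To streamline notation, write $r = |V_2|/|V_1|$ and $a = a_\theta(\Phi)$, so that Lemma \ref{chromres-3} reads
\[
\lambda_1(\Phi) \leq \frac{m}{|V_1|}\, f(\gamma),\qquad f(\gamma) = \frac{(\gamma-1)^2 + 2\gamma a}{\gamma^2 + r},
\]
for every $\gamma \in \mathbb{R}$. The task is to compute $\min_{\gamma}f(\gamma)$ and simplify.

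I would first dispose of the degenerate case $a=1$ separately. There $f(\gamma) = (\gamma^2+1)/(\gamma^2+r) = 1 - (r-1)/(\gamma^2+r)$, so (after relabeling so that $|V_2| \geq |V_1|$, i.e., $r \geq 1$) the minimum is attained at $\gamma = 0$ with value $1/r$, giving $\lambda_1(\Phi) \leq m/|V_2|$, the first branch of \eqref{eq:bipartite-opt}.

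For the generic case $a \neq 1$, rather than grinding through $f'(\gamma)=0$, I would use the standard pencil/Rayleigh-quotient trick: set $\lambda = f(\gamma)$, cross-multiply, and view the resulting identity as a quadratic in $\gamma$,
\[
(1-\lambda)\gamma^2 + 2(a-1)\gamma + (1-\lambda r) = 0.
\]
The values of $\lambda$ that $f$ actually attains are exactly those for which this equation admits a real root $\gamma$, i.e., those with nonnegative discriminant $(a-1)^2 - (1-\lambda)(1-\lambda r) \geq 0$. This rewrites as the scalar quadratic inequality
\[
r\lambda^2 - (1+r)\lambda + \bigl(1 - (a-1)^2\bigr) \leq 0,
\]
whose smaller root gives the sought minimum:
\[
\lambda_{\min} = \frac{(1+r) - \sqrt{(1-r)^2 + 4r(a-1)^2}}{2r}.
\]

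The remaining step is purely algebraic: substitute $r = |V_2|/|V_1|$, multiply by $m/|V_1|$, and clear denominators. The only nontrivial manipulation — and the place where one must be careful — is simplifying the expression under the square root; using $|V_1|+|V_2|=n$, one rewrites
\[
(|V_1|-|V_2|)^2 + 4|V_1||V_2|(a-1)^2 = n^2 - 4|V_1||V_2|\bigl(1-(a-1)^2\bigr) = n^2 - 4a(2-a)|V_1||V_2|,
\]
which delivers exactly the second branch of \eqref{eq:bipartite-opt}. The main (mild) obstacle is keeping the square-root simplification straight and confirming that the smaller root of the $\lambda$-quadratic is indeed attained by a real $\gamma$, which is automatic from the discriminant condition being an equality there.
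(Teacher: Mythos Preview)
Your argument is correct and reaches the same conclusion, but the route differs from the paper's. The paper optimizes by direct calculus: it rewrites the bound as $\frac{m}{|V_1|}\bigl(1+\frac{1-c+2\gamma(a-1)}{\gamma^2+c}\bigr)$ with $c=|V_2|/|V_1|$, differentiates in $\gamma$, solves the resulting quadratic $(a-1)\gamma^2+(1-c)\gamma-c(a-1)=0$ for the optimal $\gamma$, and then substitutes this $\gamma$ back into the bound. Your discriminant/range trick bypasses finding the optimizer altogether: you characterize the attainable values $\lambda=f(\gamma)$ directly as those making the quadratic $(1-\lambda)\gamma^2+2(a-1)\gamma+(1-\lambda r)=0$ solvable in $\mathbb{R}$, which reduces immediately to a quadratic inequality in $\lambda$ whose smaller root is the answer. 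This is cleaner --- it avoids the back-substitution step and the attendant sign bookkeeping for $\gamma$ --- at the cost of a small extra check (that the degenerate case $\lambda=1$, where the $\gamma$-equation is linear rather than quadratic, is consistent with the discriminant criterion; it is, since plugging $\lambda=1$ gives $-(a-1)^2\le 0$). Your handling of the $a=1$ case via the relabeling $|V_2|\ge |V_1|$ is also slightly more careful than the paper's, which simply takes the unique critical point $\gamma=0$ without noting that it is a maximum rather than a minimum when $|V_2|<|V_1|$.
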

\begin{proof}
	Introducing $a = a_{\theta}(\Phi)$ and $c = \frac{|V_2|}{|V_1|}$, then expanding out the inequality in Lemma \ref{chromres-3}, we have,
	\begin{align}\label{eq:lambda1-chromes-bw}
		\lambda_{1}(\Phi) &\leq \frac{m}{|V_1|}\bigg(1+ \frac{1-c+2\gamma(a-1)}{\gamma^2 + c}\bigg).
	\end{align}
	Now, differentiating the expression with respect to $\gamma$, we get,
	\begin{gather}
	\frac{2(a-1)}{\gamma^2 + c} - 2\gamma\frac{1-c+2\gamma(a-1)}{(\gamma^2  +c)^2} = 0 \label{eq:chromes-ext}\notag\\
	\Rightarrow a-1 = \gamma\frac{1-c+2\gamma(a-1)}{\gamma^2  +c}.\label{eq:chomes-quad}
	\end{gather} If $a = 1$ and $c \neq 1$ then $\gamma = 0$ from \eqref{eq:chomes-quad}, and the expression in \eqref{eq:lambda1-chromes-bw} evaluates to  $$	\lambda_{1}(\Phi) \leq \frac{m}{|V_2|}.$$ If $a = 1$ and $c = 1$ then the expression in \eqref{eq:lambda1-chromes-bw} is independent of $\gamma$ and it evaluates to  $$\lambda_{1}(\Phi) \leq \frac{m}{|V_2|}.$$Furthermore, if $a \neq 1$ then $\gamma \neq 0$, and we obtain $$\frac{a-1}{\gamma} = \frac{1-c+2\gamma(a-1)}{\gamma^2  +c}.$$Now, further simplifying we get
	\begin{gather*}
	(a-1)\gamma^2 + (1-c)\gamma - c(a-1) = 0 \\
	\Rightarrow \gamma = \frac{c-1 \pm \sqrt{(c-1)^2 + 4(a-1)^2c}}{2(a-1)}.
	\end{gather*}Thus, we obtain
	\begin{align*}
	1+ \frac{a-1}{\gamma} &= \frac{c+1 \pm \sqrt{(c-1)^2 + 4(a-1)^2c}}{2c}.
	\end{align*}
	Since we want the minimum, the positive value of $\gamma$ has to be chosen. Now, substituting back the values of $a$ and $c$ and using $n = |V_1| + |V_2|$, we obtain
	\begin{align*}
	\gamma &= \Bigg(\frac{|V_2|-|V_1|+\sqrt{n^2-4a_{\theta}(\Phi)(2-a_{\theta}(\Phi))|V_1||V_2|}}{2(a_{\theta}(\Phi)-1)|V_1|}\Bigg),
	\end{align*}
	and correspondingly those evaluates to
	$$
	\lambda_1(\Phi) \leq \frac{m}{2}\Bigg(\frac{n-\sqrt{n^2-4a_{\theta}(\Phi)(2-a_{\theta}(\Phi))|V_1||V_2|}}{|V_1||V_2|}\Bigg). \qedhere
	$$
\end{proof}



Next we prove some other bounds for $\lambda_{1}(\Phi)$ in terms of degrees. These bounds extend the bounds established in  \cite[Theorem 2.7, Theorem 2.8 and Theorem 2.9]{de2011smallest}.

\begin{theorem}\label{theorem:d_s+d_t-bound}
	Let $\Phi = (G, \varphi)$ be a connected nonempty gain graph with $n$ vertices and $d_s$ denote the degree of the vertex $v_s \in V(G)$. Then the following statements hold:
	\begin{enumerate}
		\item[(i)]  $\lambda_1(\Phi) \leq \min_{v_s \sim v_t} \bigg(\frac{1}{2}\{d_s+d_t-2\}\bigg)$.
		
		
		\item[(ii)] $\lambda_1(\Phi) \leq \min_{v_s \sim v_t}\frac{1}{2}\Bigg(d_s+d_t- \sqrt{(d_s-d_t)^2+4}\Bigg)$.
		
		\item[(iii)] If $\delta >0$ denotes the minimum degree of $G$, then $$\lambda_1(\Phi) \leq\frac{1}{2}\Bigg(\delta+n-1- \sqrt{(n-1-\delta)^2+4}\Bigg) < \delta.$$
	\end{enumerate}
\end{theorem}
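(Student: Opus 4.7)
The plan is to use the variational characterization $\lambda_1(\Phi) \leq \mathbf{x}^*\mathbf{L}(\Phi)\mathbf{x}/\mathbf{x}^*\mathbf{x}$ together with Lemma \ref{lemma:unorm hermitian}, applied to a test vector supported on just two adjacent vertices $v_s$ and $v_t$. The gain-specific trick is to set $x_s = \alpha$ and $x_t = \beta\,\overline{a_{st}}$, with $\alpha,\beta$ scalar parameters to be chosen. Because $a_{st}\overline{a_{st}} = |a_{st}|^2 = 1$, the edge $v_sv_t$ contributes $|\alpha - \beta|^2$ to $\sum |x_i - a_{ij}x_j|^2$; each of the remaining $d_s - 1$ edges at $v_s$ contributes $|\alpha|^2$ (its other endpoint carries $x$-value $0$), each of the remaining $d_t - 1$ edges at $v_t$ contributes $|\beta|^2$, and all other edges contribute $0$. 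Hence the Rayleigh quotient reduces to
\begin{equation*}
R(\alpha,\beta) \;=\; \frac{(d_s - 1)|\alpha|^2 + (d_t - 1)|\beta|^2 + |\alpha - \beta|^2}{|\alpha|^2 + |\beta|^2}.
\end{equation*}

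For (i), set $\alpha = \beta = 1$; then $R(1,1) = (d_s + d_t - 2)/2$, and minimizing over adjacent pairs yields the bound. For (ii), take $\alpha,\beta$ real and, by homogeneity, set $\alpha = 1$, $\beta = t$. Then
\begin{equation*}
R(1,t) \;=\; \frac{d_s - 2t + d_t t^{2}}{1 + t^{2}}
\end{equation*}
is precisely the Rayleigh quotient of the $2 \times 2$ Hermitian matrix
\begin{equation*}
M \;=\; \begin{pmatrix} d_s & -1 \\ -1 & d_t \end{pmatrix}
\end{equation*}
at the vector $(1,t)^{T}$. Its infimum over $t \in \mathbb{R}$ is the smaller eigenvalue of $M$, namely $\tfrac{1}{2}\bigl(d_s + d_t - \sqrt{(d_s - d_t)^{2} + 4}\bigr)$. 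Taking the minimum over adjacent pairs $v_s \sim v_t$ gives (ii).

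For (iii), specialize (ii) by choosing $v_s$ to be a vertex of minimum degree $\delta$; since $\Phi$ is connected and nonempty, $v_s$ has a neighbor $v_t$ with $d_t \leq n - 1$. The function $g(y) = y - \sqrt{(y - \delta)^{2} + 4}$ satisfies $g'(y) = 1 - (y - \delta)/\sqrt{(y - \delta)^{2} + 4} > 0$ for every real $y$, so $g$ is strictly increasing; therefore $g(d_t) \leq g(n-1)$, which yields the stated upper bound, and the strict inequality $< \delta$ follows immediately from $n - 1 - \delta < \sqrt{(n - 1 - \delta)^{2} + 4}$. The only subtle point, and the essential gain-theoretic ingredient, is the choice $x_t = \beta\,\overline{a_{st}}$: without this rotation the cross term $|x_s - a_{st}x_t|^{2}$ would not collapse to $|\alpha - \beta|^2$, and the reduction to the signless-Laplacian style $2 \times 2$ eigenproblem would fail. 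Once that is in place, (ii) and (iii) are routine scalar calculus and (i) is a one-line specialization.
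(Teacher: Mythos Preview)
Your proof is correct and follows essentially the same approach as the paper: the same test vector supported on two adjacent vertices with the gain rotation $x_t=\beta\,\overline{a_{st}}$, and the same monotonicity argument for (iii). The one small difference is that for (ii) you recognize the Rayleigh quotient of the $2\times 2$ matrix $M=\begin{pmatrix}d_s&-1\\-1&d_t\end{pmatrix}$ directly, whereas the paper parameterizes $(\alpha,\beta)=(a,\sqrt{1-a^2})$ and differentiates; your shortcut is cleaner but not a different route.
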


\begin{proof}
	\begin{enumerate}
		
		\item[(i)] Let $v_s \sim v_t$ with $s<t$. Define the vector $\mathbf{x} \in \mathbb{C}^n$ as follows: $$x_k = \begin{cases} \frac{1}{\sqrt{2}} & ~\mbox{if}~k=s, \\ \frac{1}{\sqrt{2}}\overline{\varphi_{st}} & ~\mbox{if}~k=t, \\  0 & \text{otherwise.} \end{cases}$$ Since $\Vert \mathbf{x} \Vert=1$, by Theorem \ref{theorem:CF}, we have $\lambda_1(\Phi) \leq \sum_{v_i \sim v_j}|x_i - \varphi_{ij}x_j|^2 = \frac{1}{2}(d_s + d_t - 2).$

		\item[(ii)] Let $v_s \sim v_t$ with $s<t$ and $d_s \geq d_t$.  Define the vector $\mathbf{x} \in \mathbb{C}^n$ as follows: $$x_k = \begin{cases} a & ~\mbox{if}~k=s, \\ \overline{\varphi_{st}} \sqrt{1-a^2} & ~\mbox{if}~ k= t, \\  0 & \text{otherwise,} \end{cases}$$ where $|a|\leq 1$. By Theorem \ref{theorem:CF}
		\begin{align*}
			\lambda_1(\Phi) &\leq \sum_{v_i \sim v_j}|x_i - \varphi_{ij}x_j|^2\\ &= a^2(d_s-1) + (1-a^2)(d_t-1) + (a-\sqrt{1-a^2})^2\\
			&\leq \min_{|a|\leq 1} \{a^2(d_s-1) + (1-a^2)(d_t-1) + 1 - 2a\sqrt{1-a^2}\} \notag \\
			&= \min_{|a|\leq 1} \{d_t + a^2(d_s-d_t) - 2a\sqrt{1-a^2}\}. \label{opt:a}
		\end{align*}Differentiating the above expression  with respect to $a$, and equating the obtained expression to $0$, we obtain
		\begin{align*}
			a^4[(d_s-d_t)^2+4] - a^2[(d_s-d_t)^2+4]+1 = 0,
		\end{align*}which after solving yields $$a =  \sqrt{\frac{1}{2}-\frac{d_s-d_t}{2\sqrt{(d_s-d_t)^2+4}}},$$for the minimum. Thus, we obtain $$\lambda_1(\Phi) \leq \min_{v_s \sim v_t} \frac{1}{2}\Bigg(d_s+d_t- \sqrt{(d_s-d_t)^2+4}\Bigg).$$ Proof of the case $d_s < d_t$ is left to the reader.

		\item[(iii)] Let $d_s = \delta$ and let $v_t$ be a neighbor of $v_s$. By statement $(ii)$, we see that $$\lambda_1(\Phi) \leq \min_{v_s \sim v_t}\frac{1}{2}\Bigg(\delta+d_t- \sqrt{(d_t-\delta)^2+4}\Bigg).$$It is clear that the function$$f(x) =\frac{1}{2}\Bigg(\delta+x- \sqrt{(x-\delta)^2+4}\Bigg)$$is increasing in $x$ for $x\geq \delta$. Therefore, in our case $f(x) \leq f(n - 1)$, implying the assertion. \qedhere
	\end{enumerate}
\end{proof}

\begin{remark}{\rm		In this setup, the bound given in statement $(ii)$ of Theorem \ref{theorem:d_s+d_t-bound} is optimal.}
	
\end{remark}

In the following theorem, we obtain  optimal upper bounds for $\lambda_1(\Phi)$.
\begin{theorem}\label{theorem:d_s+d_t+d_i-bound}
	Let $\Phi = (G, \varphi)$ be a connected gain graph with $n$ vertices. Let $d_s$ denote the degree of the vertex $v_s \in V(G)$ and let $\mathcal{T}_G$ be the set of triples $(i, j,k)$ such that the vertices $v_i, v_j, v_k$ form a triangle $T_{i,j,k}$ in $G$. Assuming that $\mathcal{T}_G$ is nonempty and setting $\cos \theta_{ijk} = \Rea(\varphi(T_{i, j,k}))$ for each $(i,j,k) \in \mathcal{T}_G$, the following inequalities hold:
	{\small
		\begin{enumerate}
			\item[(i)] \begin{eqnarray}\label{eq:3d-degree-bound1-triangle}
				\lambda_1(\Phi) \leq \min_{(s,t,r) \in \mathcal{T}_G} \: \frac{d_s+d_t+d_r-2\cos\theta_{str}-4}{3}.
			\end{eqnarray}
			\item[(ii)] \begin{eqnarray}\label{eq:3d-degree-bound2-triangle}
				\lambda_1(\Phi) \leq \min_{(s,t,r) \in \mathcal{T}_G} \frac{1}{4}\bigg(d_s+d_t+2d_r-2-\sqrt{(d_s+d_t-2d_r-2)^2+8(\cos\theta_{str}+1)^2}\bigg).
			\end{eqnarray}
			
			\item[(iii)] \begin{eqnarray}\label{eq:3d-degree-bound3-triangle}
				\lambda_1(\Phi) \leq \min_{(s,t,r) \in \mathcal{T}_G} \frac{1}{4}\bigg(d_s+2d_t+d_r-2-\sqrt{(d_s+d_r-2d_t-2)^2+8(\cos\theta_{str}+1)^2}\bigg).
			\end{eqnarray}

			\item[(iv)]\begin{eqnarray}\label{eq:3d-degree-bound4-triangle}
				\lambda_1(\Phi) \leq \min_{(s,t,r) \in \mathcal{T}_G} \frac{1}{4}\bigg(2d_s+d_t+d_r-2\cos\theta_{str}-\sqrt{(2d_s-d_t-d_r-2\cos\theta_{str})^2+32}\bigg).
			\end{eqnarray}
			
	\end{enumerate}}
\end{theorem}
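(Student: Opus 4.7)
The plan is to adapt the argument of Theorem~\ref{theorem:d_s+d_t-bound}, this time using test vectors $\mathbf{x}\in\mathbb{C}^n$ supported on the three vertices of the triangle $T_{s,t,r}$. By the min--max formulation of Theorem~\ref{theorem:CF} together with Lemma~\ref{lemma:unorm hermitian}, any such $\mathbf{x}$ yields $\lambda_1(\Phi)\le \sum_{v_i\sim v_j}|x_i-a_{ij}x_j|^2/\|\mathbf{x}\|^2$. Writing $x_s=a\zeta_s$, $x_t=b\zeta_t$, $x_r=c\zeta_r$ with $a,b,c\in\mathbb{R}$ and $\zeta_s,\zeta_t,\zeta_r\in\mathbb{T}$, I would first observe that one can always choose the unimodular phases so that two of the three triangle edges contribute the ``balanced'' terms among $(a-b)^2$, $(b-c)^2$, $(c-a)^2$. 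Since the triangle is unbalanced in general, the third edge cannot simultaneously be neutralised, and a short calculation shows that its contribution becomes (up to the labels of the endpoints) $\alpha^2+\gamma^2-2\alpha\gamma\cos\theta_{str}$, because the leftover phase factor works out to $\varphi(T_{s,t,r})$ up to conjugation. This is the only point at which the triangle gain enters the Rayleigh quotient.

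Once the phases are fixed, each edge incident to $v_s$, $v_t$, or $v_r$ but not lying on the triangle contributes $a^2$, $b^2$, or $c^2$ respectively, since the far endpoint has $x_j=0$. Collecting, the Rayleigh numerator reads
\[
d_s a^2 + d_t b^2 + d_r c^2 - 2ab - 2bc - 2ac\cos\theta_{str}
\]
(with a cyclic variant depending on which edge is chosen to carry the triangle phase), and the denominator is $a^2+b^2+c^2$. Bound~(i) falls out immediately by setting $a=b=c=1/\sqrt{3}$, giving $(d_s+d_t+d_r-4-2\cos\theta_{str})/3$.

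For each of (ii)--(iv), I would specialize to one of the three cases $a=b$, $a=c$, or $b=c$, and choose the uncompensated triangle edge so that $\cos\theta_{str}$ ends up in the slot consistent with the advertised expression. After the homogeneous rescaling $\|\mathbf{x}\|=1$, the Rayleigh quotient collapses in each case to a two-variable real quadratic form of the shape $A\alpha^2+B\beta^2-2C\alpha\beta$ on the unit circle $\alpha^2+\beta^2=1$, whose minimum equals the smaller eigenvalue
$\tfrac12(A+B)-\tfrac12\sqrt{(A-B)^2+4C^2}$
of $\bigl(\begin{smallmatrix}A&-C\\-C&B\end{smallmatrix}\bigr)$. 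Reading off $A,B,C$ in each of the three specializations then reproduces the three discriminant expressions displayed in the statement: for (ii) the vertex $v_r$ is the distinguished one, for (iii) it is $v_t$, and for (iv) it is $v_s$, with the uncompensated edge in the last case chosen as $tr$ so that the cross term $\cos\theta_{str}$ appears outside the square root rather than inside the cross product.

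The principal bookkeeping hurdle is deciding, for each of (ii), (iii), (iv), which edge of the triangle should carry the leftover gain so that the cross term involving $\cos\theta_{str}$ lands in the intended position of the $2\times 2$ reduction; this is what distinguishes the three discriminants from one another. Nothing conceptually deep is required beyond the Courant--Fischer inequality and the closed form for the smaller eigenvalue of a symmetric $2\times 2$ matrix, but care is needed to avoid mislabelling which of $s,t,r$ plays the distinguished role and which edge is made to absorb the triangle phase.
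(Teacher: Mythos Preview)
Your plan is correct and matches the paper's approach: both pick a test vector supported on the triangle with phases chosen to neutralise two of its three edges, and then invoke Courant--Fischer. The paper's specific choice is $x_s=a$, $x_t=b\,\overline{\varphi}_{st}$, $x_r=c\,\overline{\varphi}_{sr}$, so that the edge $tr$ alone carries the triangle gain; this \emph{single} phase choice suffices for all four parts, and the case-by-case selection of the uncompensated edge that you describe is not actually needed. The only other difference is in the one-parameter optimisation for (ii)--(iv): the paper substitutes (for example) $c=\sqrt{1-2a^2}$, differentiates, and solves the resulting biquadratic in $a^2$, whereas you propose to read off the smaller eigenvalue of the associated $2\times 2$ symmetric matrix. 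Your route is tidier and gives the same answer; just be aware that after setting, say, $a=b$ the norm constraint is $2a^2+c^2=1$, so a rescaling $\alpha=a\sqrt{2}$, $\beta=c$ is required before the standard formula $\tfrac12(A+B)-\tfrac12\sqrt{(A-B)^2+4C^2}$ applies.
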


\begin{proof}
	Let the vertices $v_s, v_t, v_r$ form a triangle in $G$ and $s<t<r$. Define the vector $\mathbf{x} \in \mathbb{C}^n$ as follows: $$x_k = \begin{cases} a &~\mbox{if}~ k=s, \\ b\overline{\varphi}_{st} &~\mbox{if}~ k= t, \\  c\overline{\varphi}_{sr} &~\mbox{if}~ k= r, \\ 0 & \text{otherwise,} \end{cases}$$ where $a^2+b^2+c^2=1$. By Theorem \ref{theorem:CF}, we obtain

\begin{align}
\lambda_1(\Phi) &\leq \sum_{v_i \sim v_j}|x_i - \varphi_{ij}x_j|^2 \notag\\
&=\sum_{\substack{v_j: v_s \sim v_j;\\ v_j \neq v_t; v_j \neq v_r}}a^2  + \sum_{\substack{v_j: v_t \sim v_j;\\ v_j \neq v_s; v_j \neq v_r}}b^2  + \sum_{\substack{v_j: v_r \sim v_j;\\ v_j \neq v_s; v_j \neq v_t}}c^2  + |a - \overline{\varphi}_{st}\varphi_{st}b|^2 + |a - \overline{\varphi}_{sr}\varphi_{sr} c|^2 \notag \\
&\quad \quad+ \quad |b - \varphi_{st}\varphi_{tr} \varphi_{rs}c|^2. \notag\\
&= d_sa^2+ d_tb^2+ d_rc^2 - 2ab - 2ac - bc (\varphi_{st}\varphi_{tr} \varphi_{rs} + \overline{\varphi}_{st}\overline{\varphi}_{tr} \overline{\varphi}_{rs}).
\end{align}

Thus,  we  obtain the following:
\begin{align}\label{eq:traingle-general-opt}
\lambda_1(\Phi) &\leq \min_{(s,t,r) \in \mathcal{T}_G}\min_{a^2+b^2+c^2=1} \: \{a^2(d_s-d_r) + b^2(d_t-d_r) - 2(ab+bc \cos\theta_{str}+ac)+d_r\}
\end{align}
	
	\begin{enumerate}
		\item[(i)] Choosing $a = b = c = \frac{1}{\sqrt{3}}$, we get:
		$$\lambda_1(\Phi) \leq \min_{(s,t,r) \in \mathcal{T}_G} \: \frac{d_s+d_t+d_r-2\cos\theta_{str}-4}{3}.$$

		\item[(ii)] Let us take $a = b$, and introduce $\alpha = d_s+d_t-2d_r-2 ,\: \beta= \cos\theta_{str}+1$. The inner optimization problem in \eqref{eq:traingle-general-opt} becomes
		\begin{eqnarray}\label{eq:traingle-a=b}
			\min_{|a| \leq 1} \: \{a^2\alpha-2a\sqrt{1-2a^2}\beta\}  + d_r.
		\end{eqnarray}After differentiating the above expression w.r.t $a$, we obtain
		\begin{align*}
			a \alpha-\sqrt{1-2a^2}\beta+\frac{2a^2}{\sqrt{1-2a^2}} &=0 \\
			\Rightarrow a^4(2\alpha^2+16\beta^2) - a^2 (\alpha^2+8\beta^2) + \beta^2 &= 0,
		\end{align*}
		which after solving yields $$a^2 = \frac{1}{4}\Bigg(1-\frac{\alpha}{\sqrt{\alpha^2+8\beta^2}}\Bigg),$$ for the minimum. Thus, after putting this value of $a$ in \eqref{eq:traingle-a=b}, we obtain
		\begin{align*}
			\lambda_1(\Phi) \leq \min_{(s,t,r) \in \mathcal{T}_G} \frac{1}{4}\bigg(d_s+d_t+2d_r-2-\sqrt{(d_s+d_t-2d_r-2)^2+8(\cos\theta_{str}+1)^2}\bigg).
		\end{align*}
		When  $\alpha =\beta = 0$, then using \eqref{eq:traingle-a=b}, we obtain the upper bound as, $\lambda_1(\Phi) \leq \min\limits_{(s,t,r) \in \mathcal{T}_G} d_r$. That is, the bound reduces to the smallest degree of vertices forming the triangle.
		\item[(iii)] Let us take $a = c$, and introduce $\alpha = d_s-2d_t+d_r-2 ,\: \beta= \cos\theta_{str}+1$. The inner optimization problem in \eqref{eq:traingle-general-opt} becomes
		\begin{eqnarray}\label{eq:traingle-a=c}
			\min_{|a| \leq 1} \: \{a^2\alpha-2a\sqrt{1-2a^2}\beta\}  + d_t,
		\end{eqnarray}which yields $$a^2 = \frac{1}{4}\Bigg(1-\frac{\alpha}{\sqrt{\alpha^2+8\beta^2}}\Bigg)$$ for the minimum. Thus, after putting the value of $a$ in \eqref{eq:traingle-a=c}, we obtain
		\begin{align*}
			\lambda_1(\Phi) \leq \min_{(s,t,r) \in \mathcal{T}_G} \frac{1}{4}\bigg(d_s+2d_t+d_r-2-\sqrt{(d_s+d_r-2d_t-2)^2+8(\cos\theta_{str}+1)^2}\bigg).
		\end{align*}

		\item[(iv)] Finally, let us take $b = c$, and introduce $\alpha = d_t+d_r-2d_s-2\cos\theta_{str} ,\: \beta= 2$. The inner optimization problem in \eqref{eq:traingle-general-opt} becomes
		\begin{eqnarray}\label{eq:traingle-b=c}
			\min_{|b| \leq 1} \: \{b^2\alpha-2b\sqrt{1-2b^2}\beta\}  + d_s
		\end{eqnarray}which yields $$b^2 = \frac{1}{4}\Bigg(1-\frac{\alpha}{\sqrt{\alpha^2+8\beta^2}}\Bigg)=\frac{1}{4}\Bigg(1-\frac{\alpha}{\sqrt{\alpha^2+32}}\Bigg)$$ for the minimum. Thus, after putting this value of $b$ in \eqref{eq:traingle-b=c}, we obtain
$$
			\lambda_1(\Phi) \leq \min_{(s,t,r) \in \mathcal{T}_G} \frac{1}{4}\bigg(2d_s+d_t+d_r-2\cos\theta_{str}-\sqrt{(2d_s-d_t-d_r-2\cos\theta_{str})^2+32}\bigg). \qedhere
$$
		
	\end{enumerate}
	
\end{proof}


\begin{remark}{\rm
		If the vertices $v_s, v_t$ and $v_r$ are such that  $v_s\sim v_t$ , $v_s \sim v_r$ and $v_t, v_r$ are not adjacent, then we obtain the bound as
		\begin{align*}
			\lambda_1(\Phi) &\leq \min_{v_t \sim v_s \sim v_r}\min_{a^2+b^2+c^2=1} \: \sum_{v_i \sim v_j}|x_i - \varphi_{ij}x_j|^2\\
			&= \min_{v_t \sim v_s \sim v_r}\min_{a^2+b^2+c^2=1} \: \{a^2(d_s-d_r) + b^2(d_t-d_r) - 2(ab+ac)+d_r\}.
		\end{align*} Note that this case is equivalent to choosing $\cos\theta_{str}=0$ in Theorem \ref{theorem:d_s+d_t+d_i-bound}. Therefore, after substituting $\cos\theta_{str}=0$ for the bounds obtained in Theorem \ref{theorem:d_s+d_t+d_i-bound}, we get the corresponding bounds as follows.
		\begin{enumerate}
			\item For $a = b = c = \frac{1}{\sqrt{3}}$
			\begin{eqnarray}\label{eq:3d-degree-bound1-tree}
				\lambda_1(\Phi) \leq \min_{v_t \sim v_s \sim v_r} \: \frac{d_s+d_t+d_r-4}{3}.
			\end{eqnarray}
			
			\item For $a = b$
			\begin{eqnarray}\label{eq:3d-degree-bound2-tree}
				\lambda_1(\Phi) \leq \min_{v_t \sim v_s \sim v_r} \frac{1}{4}\bigg(d_s+d_t+2d_r-2-\sqrt{(d_s+d_t-2d_r-2)^2+8}\bigg).
			\end{eqnarray}
			
			\item For $a = c$
			\begin{eqnarray}\label{eq:3d-degree-bound3-tree}
				\lambda_1(\Phi) \leq \min_{v_t \sim v_s \sim v_r} \frac{1}{4}\bigg(d_s+2d_t+d_r-2-\sqrt{(d_s-2d_t+d_r-2)^2+8}\bigg).
			\end{eqnarray}
			
			\item For $b = c$
			\begin{eqnarray}\label{eq:3d-degree-bound4-tree}
				\lambda_1(\Phi) \leq \min_{v_t \sim v_s \sim v_r} \frac{1}{4}\bigg(2d_s+d_t+d_r-\sqrt{(2d_s-d_t-d_r)^2+32}\bigg).
			\end{eqnarray}
			
		\end{enumerate}
	}
\end{remark}

\section{Bounds for the largest eigenvalue of gain Laplacian}\label{sec:largest-eig-bounds}
The aim of this section is to establish bounds for the largest eigenvalues of the gain Laplacian matrices. It  is known that  $\lambda_n(\Phi)  \geq  \Delta + 1$ \cite[Theorem 5.8]{reff2012spectral}. In the next theorem, we establish an improvement of this bound for certain values of $k$.


\begin{lemma}\label{lemma:diagonal-L-k}
	Let $\Phi  = (G,\varphi)$ be a gain graph. Let $\mathbf{L}(\Phi)$ be the Laplacian matrix of the gain graph. Then  $\lambda_n(\Phi) \geq \bigg(\max\limits_{i \in \{1,2,\dots,n\}}\big(\mathbf{L}^k(\Phi)_{ii}\big)\bigg)^{\frac{1}{k}}.$

\end{lemma}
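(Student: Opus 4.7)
The plan is to leverage the fact that $\mathbf{L}(\Phi)$ is Hermitian and positive semi-definite (as noted in Section~\ref{sec:prelim}), together with the diagonal-element lower bound for the largest eigenvalue of a Hermitian matrix given in Corollary~\ref{corollary:max-diag-element}. The key observation is that taking powers of $\mathbf{L}(\Phi)$ preserves Hermiticity while raising the eigenvalues to the same power, so we can apply the diagonal bound to $\mathbf{L}^k(\Phi)$ instead of $\mathbf{L}(\Phi)$ itself.

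First I would argue that $\mathbf{L}^k(\Phi)$ is Hermitian (since a power of a Hermitian matrix is Hermitian) and that its spectrum is $\{\lambda_i(\Phi)^k : 1 \leq i \leq n\}$. Because $\mathbf{L}(\Phi)$ is positive semi-definite, all $\lambda_i(\Phi) \geq 0$, and therefore the largest eigenvalue of $\mathbf{L}^k(\Phi)$ is precisely $\lambda_n(\Phi)^k$ (the map $t \mapsto t^k$ is monotone on $[0,\infty)$).

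Next, applying Corollary~\ref{corollary:max-diag-element} to the Hermitian matrix $\mathbf{L}^k(\Phi)$ yields
\[
\lambda_n(\Phi)^k \;\geq\; \max_{i \in \{1,\dots,n\}} \bigl(\mathbf{L}^k(\Phi)\bigr)_{ii}.
\]
Since both sides are non-negative, taking the $k$-th root on both sides preserves the inequality and delivers the claimed bound.

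There is no serious obstacle here; the proof is essentially a one-line reduction to Corollary~\ref{corollary:max-diag-element} via the spectral mapping $\lambda \mapsto \lambda^k$. The only point worth being careful about is ensuring non-negativity of the eigenvalues so that the $k$-th root step is valid for arbitrary positive integer $k$, which is guaranteed by the positive semi-definiteness of $\mathbf{L}(\Phi)$.
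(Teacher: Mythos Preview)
Your proposal is correct and follows exactly the same approach as the paper: apply Corollary~\ref{corollary:max-diag-element} to the Hermitian matrix $\mathbf{L}^k(\Phi)$ and then take the $k$-th root. The paper's own proof is a single sentence to this effect; your version simply spells out the supporting details (Hermiticity of powers, monotonicity of $t\mapsto t^k$ on $[0,\infty)$ via positive semi-definiteness) that the paper leaves implicit.
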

\begin{proof}
	By taking the $i$-th diagonal entry of $\mathbf{L}^k(\Phi)$ in the Corollary \ref{corollary:max-diag-element}, we get the result.
\end{proof}

In Table $3$, we compare the above bound with the known bounds for $\lambda_n(\Phi)$. We infer that, for the chosen graphs and for $k = 6, 7, 9, 11, 12$ the above bound is slightly better than $\Delta +1.$

Next we show that the bound in the previous theorem converge to $\lambda_n(\Phi)$, and hence giving an iterative scheme to approximate $\lambda_n(\Phi)$. This is done in the next theorem.
\begin{theorem}\label{thm:diag-elem}
	Let $\Phi  = (G,\varphi)  $ be a gain graph. Let $\mathbf{L}(\Phi)$ be the Laplacian matrix of the gain graph and let $L_{ii}$ denote the $i^{th}$ diagonal entry of the matrix $L$. Then $$ \lambda_n(\Phi) = \lim_{k \to \infty}\bigg(\max\limits_{i \in \{1,2,\dots,n\}} \big(\mathbf{L}^k(\Phi)_{ii}\big)\bigg)^{\frac{1}{k}}. $$
\end{theorem}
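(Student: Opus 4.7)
The plan is to sandwich the quantity $M_k := \max_{i} (\mathbf{L}^k(\Phi))_{ii}$ between $\lambda_n^k / n$ and $\lambda_n^k$, then take $k$-th roots and let $k \to \infty$. Since $\mathbf{L}(\Phi)$ is Hermitian and positive semi-definite, all $\lambda_j \geq 0$, and the spectral decomposition $\mathbf{L}(\Phi) = U \Lambda U^*$ makes diagonal entries of powers easy to compute, which is the key technical feature I will exploit.

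For the upper bound, I would apply Corollary \ref{corollary:max-diag-element} (or equivalently the Rayleigh quotient with $e_i$) to the Hermitian matrix $\mathbf{L}^k(\Phi)$, whose eigenvalues are $\lambda_1^k \leq \dots \leq \lambda_n^k$. This directly gives $M_k \leq \lambda_n(\mathbf{L}^k(\Phi)) = \lambda_n(\Phi)^k$, so $M_k^{1/k} \leq \lambda_n(\Phi)$ for every $k$.

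For the lower bound, I would use a trace argument. Since every eigenvalue of $\mathbf{L}(\Phi)$ is non-negative,
\begin{equation*}
\Tr(\mathbf{L}^k(\Phi)) \;=\; \sum_{j=1}^{n} \lambda_j(\Phi)^k \;\geq\; \lambda_n(\Phi)^k,
\end{equation*}
while on the other hand $\Tr(\mathbf{L}^k(\Phi)) = \sum_{i=1}^{n}(\mathbf{L}^k(\Phi))_{ii} \leq n\, M_k$. Combining these yields $M_k \geq \lambda_n(\Phi)^k / n$, hence $M_k^{1/k} \geq \lambda_n(\Phi)\, n^{-1/k}$.

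Putting the two inequalities together gives $\lambda_n(\Phi)\, n^{-1/k} \leq M_k^{1/k} \leq \lambda_n(\Phi)$, and since $n^{-1/k} \to 1$, the squeeze theorem delivers the claimed limit. There is no real obstacle here; the only point to check carefully is positive semi-definiteness (needed so that $\lambda_n^k$ dominates $\Tr(\mathbf{L}^k)$ from below), which was already noted in Section \ref{sec:prelim}.
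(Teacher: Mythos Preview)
Your proof is correct and takes a genuinely different route from the paper's. The paper proceeds by writing out the spectral decomposition $\mathbf{L}(\Phi)=U\Lambda U^*$, expressing $(\mathbf{L}^k)_{ii}=\sum_j \lambda_j^k|u_{ji}|^2$, factoring out $\lambda_n^k$, and then arguing directly that the remaining factor tends to $1$ as $k\to\infty$; this requires observing that $\max_i\sum_{j\ge p}|u_{ji}|^2>0$ because $U$ is unitary. Your argument bypasses the eigenvector coefficients entirely: the upper bound $M_k\le\lambda_n^k$ is just Corollary~\ref{corollary:max-diag-element} applied to $\mathbf{L}^k$, and the lower bound $M_k\ge\lambda_n^k/n$ comes from the trace estimate $\lambda_n^k\le\Tr(\mathbf{L}^k)\le nM_k$, after which the squeeze theorem finishes. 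Your approach is shorter and avoids any delicate interchange of $\max$ and $\lim$; the paper's approach, on the other hand, makes transparent exactly which eigenvector components govern the rate of convergence, which could be useful if one wanted quantitative information rather than just the limit.
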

\begin{proof} Let $\lambda_1(\Phi) \leq \dots \leq \lambda_{p-1}(\Phi) < \lambda_p(\Phi) = \dots = \lambda_n(\Phi)$ be the eigenvalues of $\mathbf{L}(\Phi).$
	By the
	spectral decomposition of $\mathbf{L}(\Phi)$, we have
	$$\mathbf{L}(\Phi) = \mathbf{U}\mathbf{\Lambda}(\Phi)\mathbf{U}^*= 	\begin{bmatrix}
		u_{11} & u_{21} & \dots & u_{n1} \\
		u_{12} & u_{22} & \dots & u_{2n}\\
		\vdots  & \vdots & \ddots & \vdots \\
		u_{1n} & u_{2n} & \dots & u_{nn}
	\end{bmatrix}
	\begin{bmatrix}
		\lambda_1(\Phi) & 0 & \dots & 0 \\
		0 & \lambda_2(\Phi) & \dots & 0\\
		\vdots  & \vdots & \ddots & \vdots \\
		0 & 0 & \dots & \lambda_n(\Phi)
	\end{bmatrix}\begin{bmatrix}
		u_{11}^* & u_{12}^* & \dots & u_{1n}^* \\
		u_{21}^* & u_{22}^* & \dots & u_{2n}^*\\
		\vdots  & \vdots & \ddots & \vdots \\
		u_{n1} ^*& u_{n2}^* & \dots & u_{nn}^*
	\end{bmatrix}.$$
	Thus $$\mathbf{L}^k(\Phi)_{ii} = \sum_{j=1}^n\lambda_j^k(\Phi) \: |u_{ji}|^2.$$ Now
	\begin{align*}
		\Bigg(\sum_{j=1}^n\lambda_j(\Phi)^k \: |u_{ji}|^2\Bigg)^{\frac{1}{k}}  = \lambda_n(\Phi) \Bigg( \sum_{j=1}^n\Bigg(\frac{\lambda_j(\Phi)}{\lambda_n(\Phi)}\Bigg)^k \: |u_{ji}|^2\Bigg)^{\frac{1}{k}},
	\end{align*}so when $k \to \infty$, $\bigg(\frac{\lambda_j(\Phi)}{\lambda_n(\Phi)}\bigg)^k \to 0$. We obtain,
	
\textbf{	\begin{align*}
		\lim_{k \to \infty} &\lambda_n(\Phi) \Bigg( \max\limits_{i \in \{1,2,\dots,n\}} \: \sum_{j=1}^n\Bigg(\frac{\lambda_j(\Phi)}{\lambda_n(\Phi)}\Bigg)^k \: |u_{ji}|^2\Bigg)^{\frac{1}{k}} \\
		&=  \lambda_n(\Phi) \Bigg( \max\limits_{i \in \{1,2,\dots,n\}} \: \lim_{k \to \infty} \Bigg[ \sum_{j=1}^n\Bigg(\frac{\lambda_j(\Phi)}{\lambda_n(\Phi)}\Bigg)^k \: |u_{ji}|^2\Bigg]\Bigg)^{\frac{1}{k}}  \\
		&= \lambda_n(\Phi) \: \lim_{k \to \infty} \Bigg( \max\limits_{i \in \{1,2,\dots,n\}} \: \sum_{j=p}^n |u_{ji}|^2\Bigg)^{\frac{1}{k}}\\ &= \lambda_n(\Phi). 	
	\end{align*}}
	Note that $U$ is unitary, so $  \max\limits_{i \in \{1,2,\dots,n\}}  \: \sum_{j=p}^n |u_{ji}|^2$ is positive.
\end{proof}
 Using the above bound, we can derive a lower bound for the smallest eigenvalue of an \emph{unbalanced} gain graph.
\begin{corollary}
	Let $\Phi$ be a unbalanced gain graph. Let $\mathbf{L}(\Phi)$ be the Laplacian matrix of the gain graph and let $L_{ii}$ denote the $i^{th}$ diagonal entry of the matrix $L$. Then $$ \lambda_1(\Phi) = \lim_{k \to \infty}\bigg(\max\limits_{i \in \{1,2,\dots,n\}} \big(\mathbf{L}^{-k}(\Phi)_{ii}\big)\bigg)^{-\frac{1}{k}} .$$
\end{corollary}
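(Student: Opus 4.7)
The plan is to reduce the corollary to Theorem \ref{thm:diag-elem} applied to the Hermitian positive definite matrix $\mathbf{L}^{-1}(\Phi)$. The key observation is that if $\Phi$ is unbalanced, then $\Phi$ is not switching equivalent to $(G,1)$, so by the discussion in Section \ref{sec:prelim}, $0$ is not an eigenvalue of $\mathbf{L}(\Phi)$. Combined with the fact that $\mathbf{L}(\Phi)$ is positive semidefinite, this gives $\lambda_1(\Phi) > 0$, and hence $\mathbf{L}(\Phi)$ is invertible.

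First, I would observe that $\mathbf{L}^{-1}(\Phi)$ is Hermitian and its eigenvalues are precisely the reciprocals $1/\lambda_n(\Phi) \le \cdots \le 1/\lambda_1(\Phi)$, with the largest being $1/\lambda_1(\Phi)$. Next, I would redo the spectral-decomposition computation of Theorem \ref{thm:diag-elem}, but now applied to $\mathbf{L}^{-1}(\Phi)$ instead of $\mathbf{L}(\Phi)$: writing $\mathbf{L}(\Phi) = \mathbf{U}\mathbf{\Lambda}(\Phi)\mathbf{U}^{*}$, we get
\begin{equation*}
\big(\mathbf{L}^{-k}(\Phi)\big)_{ii} \;=\; \sum_{j=1}^{n} \lambda_j(\Phi)^{-k}\, |u_{ji}|^{2},
\end{equation*}
and then the identical limiting argument (factoring out the dominant term, which now corresponds to $1/\lambda_1(\Phi)$ rather than $\lambda_n(\Phi)$) yields
\begin{equation*}
\lim_{k\to\infty}\Big(\max_{i \in \{1,\dots,n\}} \big(\mathbf{L}^{-k}(\Phi)\big)_{ii}\Big)^{1/k} \;=\; \frac{1}{\lambda_1(\Phi)}.
\end{equation*}
Taking reciprocals of both sides then gives the claimed formula.

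The main obstacle is essentially bookkeeping rather than a genuine difficulty: one must confirm that the multiplicity grouping in the proof of Theorem \ref{thm:diag-elem} still leaves $\max_i \sum_{j\in J} |u_{ji}|^2 > 0$, where $J$ is the index set of the largest eigenvalue of $\mathbf{L}^{-1}(\Phi)$ (equivalently, the smallest eigenvalue of $\mathbf{L}(\Phi)$). This holds because $\mathbf{U}$ is unitary, so the rows of the submatrix indexed by $J$ cannot all be identically zero. Once this positivity is noted, the $1/k$-th-root limit is well-defined, and the reduction goes through cleanly by invoking Theorem \ref{thm:diag-elem} (or, more precisely, the proof technique of that theorem applied verbatim to $\mathbf{L}^{-1}(\Phi)$).
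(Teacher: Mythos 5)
Your proof is correct and takes essentially the same route as the paper: the paper likewise observes that unbalancedness forces $\lambda_1(\Phi)>0$ so that $\mathbf{L}^{-1}(\Phi)$ exists with largest eigenvalue $1/\lambda_1(\Phi)$, and then invokes Theorem \ref{thm:diag-elem} applied to $\mathbf{L}^{-1}(\Phi)$. Your parenthetical caveat that one is really applying the \emph{proof technique} of Theorem \ref{thm:diag-elem} (since $\mathbf{L}^{-1}(\Phi)$ is not literally a gain Laplacian) is a slightly more careful reading than the paper's one-line argument, but the substance is identical.
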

\begin{proof}
	Since $\Phi$ is unbalanced, $\mathbf{L}^{-1}(\Phi)$ exists and its maximum eigenvalue is $\frac{1}{\lambda_{1}(\Phi)}$. By applying the Theorem \ref{thm:diag-elem} to $\mathbf{L}^{-1}(\Phi)$, we have  desired result.
\end{proof}

The following result will be helpful in the proof of Theorem \ref{theorem:avg-2-degree}.
\begin{lemma}[{\cite[Theorem 4.4]{reff2012spectral}}]\label{lemma:phi-psi}
	Let $\Phi = (G,\varphi)$ be a unit complex gain graph on $n$ vertices. Let $\lambda_n(\Phi)$ denote the largest eigenvalue of the Laplacian $\mathbf{L}(\Phi)$. Then $$\lambda_n(\Phi) \leq \lambda_n(-),$$where $\lambda_{n}(-) $ denote the largest eigenvalue of the signless Laplacian of $G$.  Equality holds if and only if $\Phi \sim (G, -).$
\end{lemma}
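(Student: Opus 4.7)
The plan is to combine the variational characterization of $\lambda_n(\Phi)$ with the quadratic form from Lemma \ref{lemma:unorm hermitian}, and to bound things edgewise by the triangle inequality. Note first that for the signed graph $(G,-)$ with all edge gains equal to $-1$, we have $\mathbf{L}(G,-)=\mathbf{D}(G)+\mathbf{A}(G)=\mathbf{Q}(G)$, and for any $\mathbf{y}\in\mathbb{R}^n$ one has $\mathbf{y}^*\mathbf{L}(G,-)\mathbf{y}=\sum_{v_i\sim v_j}(y_i+y_j)^2$. This is the object we want to compare against.

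Let $\mathbf{x}\in\mathbb{C}^n$ be a unit eigenvector realizing $\lambda_n(\Phi)=\mathbf{x}^*\mathbf{L}(\Phi)\mathbf{x}$, and define $\mathbf{y}\in\mathbb{R}^n$ by $y_i=|x_i|$, so that $\mathbf{y}^*\mathbf{y}=\mathbf{x}^*\mathbf{x}=1$. Because $|a_{ij}|=1$, the triangle inequality yields, edge by edge,
\[
|x_i-a_{ij}x_j|\le|x_i|+|a_{ij}x_j|=|x_i|+|x_j|=y_i+y_j.
\]
Squaring, summing over edges, and invoking Lemma \ref{lemma:unorm hermitian} together with the Rayleigh quotient characterization from Theorem \ref{theorem:CF} gives
\[
\lambda_n(\Phi)=\sum_{v_i\sim v_j}|x_i-a_{ij}x_j|^2\le\sum_{v_i\sim v_j}(y_i+y_j)^2=\mathbf{y}^*\mathbf{L}(G,-)\mathbf{y}\le\lambda_n(-),
\]
which is the stated inequality.

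For the equality case, the hard part is extracting the switching function. If $\lambda_n(\Phi)=\lambda_n(-)$, then every inequality above is forced to be an equality. The second one forces $\mathbf{y}$ to be a top eigenvector of $\mathbf{L}(G,-)$; since $G$ is connected this eigenvector can be chosen strictly positive by a Perron-type argument (applied to $\mathbf{Q}(G)$ on each connected component), so $y_i>0$ for every $i$, i.e. $x_i\neq0$ for all $i$. The edgewise equality $|x_i-a_{ij}x_j|=|x_i|+|x_j|$ then forces $x_i$ and $-a_{ij}x_j$ to be nonnegative real multiples of a common unit complex number, equivalently $x_i/|x_i|=-a_{ij}\,x_j/|x_j|$ for every edge $v_i\sim v_j$.

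Writing $\zeta(v_k):=x_k/|x_k|\in\mathbb{T}$ and using \eqref{eq: switching equi} with the diagonal matrix $\mathbf{D}_\zeta=\operatorname{diag}(\zeta(v_1),\dots,\zeta(v_n))$, the previous relation rearranges to
\[
\zeta(v_i)^{-1}\,a_{ij}\,\zeta(v_j)=-1\qquad\text{for every edge }v_i\sim v_j,
\]
so $\mathbf{D}_\zeta^{-1}\mathbf{A}(\Phi)\mathbf{D}_\zeta=\mathbf{A}(G,-)$ and hence $\Phi\sim(G,-)$. Conversely, if $\Phi\sim(G,-)$, then $\mathbf{L}(\Phi)$ is similar to $\mathbf{L}(G,-)=\mathbf{Q}(G)$ via a unitary diagonal conjugation, so they share all eigenvalues. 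The main obstacle is the connectedness / positivity step needed to upgrade the triangle-inequality equalities into a globally defined switching function $\zeta$; once positivity of $\mathbf{y}$ is secured, the switching identity is a direct rewriting.
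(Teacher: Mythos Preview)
The paper does not give its own proof of this lemma; it is quoted from \cite[Theorem~4.4]{reff2012spectral} and used as a black box. Your argument for the inequality is the standard one and is correct: bound the quadratic form from Lemma~\ref{lemma:unorm hermitian} edgewise via $|x_i-a_{ij}x_j|\le|x_i|+|x_j|$, then compare Rayleigh quotients.

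For the equality case, the issue you flag at the end is genuine and not fully handled by your parenthetical ``applied to $\mathbf{Q}(G)$ on each connected component''. The Perron--Frobenius step needs $G$ connected so that $\mathbf{Q}(G)$ is irreducible, its top eigenvalue is simple, and the eigenvector $\mathbf{y}$ is strictly positive. If $G$ is disconnected the ``only if'' direction can actually fail: when one component $C_1$ satisfies $\Phi|_{C_1}\sim(C_1,-)$ and carries the largest signless Laplacian eigenvalue among all components, equality $\lambda_n(\Phi)=\lambda_n(-)$ holds no matter what $\varphi$ does on the remaining components, so $\Phi\sim(G,-)$ need not hold globally. Your eigenvector $\mathbf{y}$ would then vanish identically on those other components, and no switching function can be extracted there. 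In the paper the lemma is only invoked in Theorem~\ref{theorem:avg-2-degree}, where $G$ is assumed connected, so this does not affect the paper's results; but as written the equality characterization needs a connectedness hypothesis.
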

In the next theorem we establish an upper  bound for the  largest eigenvalue of the gain Laplacian matrix using arbitrary invertible diagonal matrices.  For some particular choices of the diagonal matrix, we obtain parametric bounds for the largest eigenvalue of the gain Laplacian (\textit{cf.} Remark \ref{rem-inv-mat}), which can be tuned to achieve superior performance as compared to the existing bounds.
\begin{theorem}\label{theorem:avg-2-degree}
	Let $\Phi = (G,\varphi)$ be a connected $\mathbb{T}$-gain graph on $n$ vertices. Let $\mathbf{C}$ be an invertible diagonal matrix given as $\diag(c_1,c_2,\dots,c_n) \in \mathbb{C}^{n\times n}$. Then, $$\lambda_n(\Phi) \leq \max\limits_{v_i \in V(\Phi)}\bigg\{d_i+\sum_{v_i\sim v_j}\frac{|c_j|}{|c_i|}\bigg\}.$$	
\end{theorem}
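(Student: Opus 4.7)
The plan is to apply Gershgorin's theorem after a similarity transformation that leaves the spectrum invariant but reshapes the off-diagonal weights in a useful way. Since $\mathbf{C}$ is invertible and diagonal, the matrix $\mathbf{M} := \mathbf{C}^{-1}\mathbf{L}(\Phi)\mathbf{C}$ has the same eigenvalues as $\mathbf{L}(\Phi)$. A direct entrywise computation yields $M_{ii} = d_i$ (conjugation by a diagonal matrix preserves the diagonal entries), and for $i \neq j$, $M_{ij} = -\varphi_{ij}\, c_j/c_i$ when $v_i \sim v_j$ and $M_{ij} = 0$ otherwise.

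Next, I would invoke Gershgorin's theorem on $\mathbf{M}$. Every eigenvalue $\lambda$ of $\mathbf{M}$ lies in a disk centered at some $d_i$ with radius
\[
R_i \;=\; \sum_{j \neq i}|M_{ij}| \;=\; \sum_{v_i \sim v_j} \left|\varphi_{ij}\,\frac{c_j}{c_i}\right| \;=\; \sum_{v_i \sim v_j} \frac{|c_j|}{|c_i|},
\]
where the last equality uses $|\varphi_{ij}| = 1$.

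Finally, since $\mathbf{L}(\Phi)$ is Hermitian its spectrum is real, so the complex-norm estimate $|\lambda - d_i| \leq R_i$ collapses to the one-sided inequality $\lambda \leq d_i + R_i$ for some index $i$. Applying this to $\lambda = \lambda_n(\Phi)$ and maximizing over $i$ delivers the claimed bound. There is really no substantive obstacle here: the argument is a clean application of Gershgorin, and the role of $\mathbf{C}$ is precisely to generate a parametric family of such bounds, which can then be tuned by judicious choices of the diagonal weights to recover or improve existing estimates such as those in Theorem \ref{theorem:upperbounds-beta_n}.
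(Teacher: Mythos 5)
Your proof is correct and follows essentially the same route as the paper: conjugate $\mathbf{L}(\Phi)$ by the diagonal matrix $\mathbf{C}$ and apply Gershgorin's theorem to the similar matrix $\mathbf{C}^{-1}\mathbf{L}(\Phi)\mathbf{C}$. Your explicit remark that the Hermiticity of $\mathbf{L}(\Phi)$ (hence real spectrum) lets the disk inclusion $|\lambda - d_i| \leq R_i$ collapse to the one-sided bound $\lambda \leq d_i + R_i$ is a small clarification the paper leaves implicit, but the argument is otherwise identical.
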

\begin{proof}

	Consider the matrix $\mathbf{H}(\Phi) = \mathbf{C}^{-1}\mathbf{L}(\Phi)\mathbf{C}$. Then
	\begin{gather*}
		\mathbf{H}(\Phi) = \begin{bmatrix}
			d_1 & \frac{c_2}{c_1}\mathbf{L}(\Phi)_{12} & \dots & \frac{c_n}{c_1}\mathbf{L}(\Phi)_{1n} \\
			\frac{c_1}{c_2}\mathbf{L}(\Phi)_{21} & d_2 & \dots & \frac{c_n}{c_2}\mathbf{L}(\Phi)_{2n}\\
			\vdots  & \vdots & \ddots & \vdots \\
			\frac{c_1}{c_n}\mathbf{L}(\Phi)_{n1} & \frac{c_2}{c_n}\mathbf{L}(\Phi)_{n2} & \dots & d_n
		\end{bmatrix}.
	\end{gather*}
	Now, by Gershgorin's circle theorem,  we have,
	\begin{gather}\label{gen-2-deg}
		\lambda_n(\Phi) \leq \max\limits_{v_i \in V(\Phi)}\bigg\{d_i+\sum_{v_i\sim v_j}\frac{|c_j|}{|c_i|}\bigg\}.
	\end{gather}	
	\textbf{Equality:} Since the above proof technique is independent of the choice of gain, we obtain $\lambda_n(-) \leq \max\limits_{v_i \in V(\Phi)}\{d_i+\sum_{v_i\sim v_j}\frac{|c_j|}{|c_i|}\}.$ By Lemma \ref{lemma:phi-psi}, we have $\lambda_n(\Phi) \leq \lambda_n(-) \leq \max\limits_{v_i \in V(\Phi)}\{d_i+\sum_{v_i\sim v_j}\frac{|c_j|}{|c_i|}\}.$ Thus, in case of equality it is necessary that $\lambda_n(\Phi)  = \lambda_n(-)$ holds, and which we know, by Lemma \ref{lemma:phi-psi}, holds if and only if $(G,\phi) \sim (G, -)$.
\end{proof}
Below we provide some concrete bounds by considering suitable values for $\mathbf{C}$. For a vertex $v$ of $V(\Phi)$, we denote the average $2$-degree by $m_i = \frac{\sum_{v_i \sim v_j} d_j}{d_i}$ and different types of generalized $2$-degrees as follows: \begin{enumerate}
	\item $m_i^k = \frac{\sum_{v_i \sim v_j} (d_j+ m_j^{k-1})}{(d_i + m_i^{k-1})}$ with the convention that, $m_j^{0} = 0$ and $k = 1,2, \dots$ for all vertices $v_j \in V(\Phi)$.
	\item $n_i^k = \frac{\sum_{v_i \sim v_j} (d_j+ n_j^{k-1} -r)}{(d_i + n_i^{k-1}-r)}$ with the convention that, $n_j^{0} = 0$ and $k = 1,2, \dots$ for all vertices $v_j \in V(\Phi)$, and $0< r <1$. 
	\item $l_i^k = \frac{\sum_{v_i \sim v_j} (d_j+ l_j^{k-1}- l_j^{k-2})}{(d_i + l_i^{k-1}-l_i^{k-2})}$ with the convention that, $l_j^{0} = 0$, $l_j^{1} = m_j$ and $k = 2,3, \dots$ for all vertices $v_j \in V(\Phi)$.
\end{enumerate}
Note that the choices of initial conditions for the recurrence relations given above are arbitrary.

\begin{remark}\label{rem-inv-mat}{\rm
		In  Theorem \ref{theorem:avg-2-degree}, the diagonal matrix $C$ can be chosen to give some concrete bounds as follows:
		\begin{enumerate}
			\item If $\mathbf{C} = \diag(d_1, \dots, d_n)$, then we have,
			\begin{align}\label{eq: gershgorian-m}
				\lambda_n(\Phi) \leq \max_{v_i \in V(\Phi)} \{d_i  + m_i\}.
			\end{align}
			\item If $\mathbf{C} = \begin{bmatrix} (d_1 + m_1^{k-1})\mathbf{e}_1 & (d_2 + m_2^{k-1})\mathbf{e}_2 & \dots &  (d_n + m_n^{k-1})\mathbf{e}_n \end{bmatrix}$, then we have,
			\begin{align}\label{eq: gershgorian-mk}
				\lambda_n(\Phi) \leq \max_{v_i \in V(\Phi)} \{d_i  + m_i^k\}.
			\end{align}
			\item  If $\mathbf{C} = \begin{bmatrix} (d_1 + n_1^{k-1} - r)\mathbf{e}_1 & (d_2 + n_2^{k-1}-r)\mathbf{e}_2 & \dots &  (d_n + n_n^{k-1} - r)\mathbf{e}_n \end{bmatrix}$, then we have,
			\begin{align}\label{eq: gershgorian-n}
				\lambda_n(\Phi) \leq \max_{v_i \in V(\Phi)} \{d_i  + n_i^k\}.
			\end{align}
			\item If $\mathbf{C} = \begin{bmatrix} (d_1 + l_1^{k-1} - l_1^{k-2})\mathbf{e}_1 & (d_2 + l_2^{k-1} - l_2^{k-2})\mathbf{e}_2 & \dots &  (d_n + l_n^{k-1} - l_n^{k-2})\mathbf{e}_n \end{bmatrix}$, then we have,
			\begin{align}\label{eq: gershgorian-l}
				\lambda_n(\Phi) \leq \max_{v_i \in V(\Phi)} \{d_i  + l_i^k\}.
			\end{align}
	\end{enumerate}}	
\end{remark}

In Table \ref{tab:bounds-for-max-eig}, we provide a comparison of the above bounds with those existing in the literature. We observe that some of the above bounds are better than  known bounds in the literature.

Next, we provide a sequence of bounds for $\lambda_{n}(\Phi)$ in terms of traces of powers of Laplacian, and prove that the sequence converges to $\lambda_{n}(\Phi)$.
\begin{theorem}\label{trace-bound}
	Let $\Phi = (G, \varphi)$ be a complex unit gain graph with $n$ vertices with $n \geq 2$. Let $\Tr(L)$ denote the trace of the matrix $L$. Then
	\begin{align}\label{eq:trace-powers-lambda}
		\lambda_n(\Phi) &\geq \sqrt[k]{\frac{\Tr(L^{k}(\Phi))}{n} +  \sqrt{\frac{1}{n^2(n-1)}\bigg(n\Tr(L^{2k}(\Phi)) - \Tr(L^k(\Phi))^2\bigg)}} \quad \text{and}\\
		\lambda_n(\Phi) &= \lim_{k \to \infty}
		\sqrt[k]{\frac{\Tr(L^{k}(\Phi))}{n} +  \sqrt{\frac{1}{n^2(n-1)}\bigg(n\Tr(L^{2k}(\Phi)) - \Tr(L^k(\Phi))^2\bigg)}}. \notag
	\end{align}
\end{theorem}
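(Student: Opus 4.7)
The plan is to recognize the inequality as a Wolkowicz--Styan-type lower bound for the maximum of a finite sequence of real numbers in terms of its mean and (uncorrected) variance, and then to apply it to the spectrum $\{\lambda_i(\Phi)^k\}_{i=1}^n$ of $\mathbf{L}^k(\Phi)$. The limit statement will follow by extracting the dominant-eigenvalue asymptotics of the traces.

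First I would set $\mu_i := \lambda_i(\Phi)^k$, so that $\sum_{i=1}^n \mu_i = \Tr(\mathbf{L}^k(\Phi))$ and $\sum_{i=1}^n \mu_i^2 = \Tr(\mathbf{L}^{2k}(\Phi))$; writing $\bar{\mu} = \frac{1}{n}\sum_i \mu_i$ and $\tilde{s}^2 = \frac{1}{n}\sum_i (\mu_i - \bar{\mu})^2$, a direct identity shows
$$\tilde{s}^2 \;=\; \frac{n\Tr(\mathbf{L}^{2k}(\Phi)) - \Tr(\mathbf{L}^k(\Phi))^2}{n^2},$$
so the asserted inequality reduces to $\mu_n \geq \bar{\mu} + \tilde{s}/\sqrt{n-1}$. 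I would prove this by combining two elementary estimates. The nonnegative sum $\sum_{i=1}^n (\mu_i - \mu_1)(\mu_n - \mu_i) \geq 0$ expands, using $\sum \mu_i = n\bar{\mu}$ and $\sum \mu_i^2 = n\bar{\mu}^2 + n\tilde{s}^2$, to give $\tilde{s}^2 \leq (\mu_n - \bar{\mu})(\bar{\mu} - \mu_1)$. Separately, from $\sum_{i=2}^n(\mu_i - \bar{\mu}) = \bar{\mu} - \mu_1$ and the pointwise bound $\mu_i - \bar{\mu} \leq \mu_n - \bar{\mu}$, one obtains $\bar{\mu} - \mu_1 \leq (n-1)(\mu_n - \bar{\mu})$. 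Multiplying these yields $\tilde{s}^2 \leq (n-1)(\mu_n - \bar{\mu})^2$; since $\mu_n \geq \bar{\mu}$, taking the positive square root gives the Wolkowicz--Styan bound, and taking a $k$-th root delivers \eqref{eq:trace-powers-lambda}.

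For the limit, let $m$ denote the multiplicity of $\lambda_n(\Phi)$. Factoring out the top eigenvalue,
$$\Tr(\mathbf{L}^k(\Phi)) \;=\; \lambda_n(\Phi)^k\Bigl(m + \!\!\!\sum_{\lambda_i(\Phi) < \lambda_n(\Phi)}\!\! \bigl(\lambda_i(\Phi)/\lambda_n(\Phi)\bigr)^k\Bigr),$$
and the remainder sum vanishes as $k \to \infty$; the analogous statement holds for $\Tr(\mathbf{L}^{2k}(\Phi))/\lambda_n(\Phi)^{2k}$. Consequently the expression under the $k$-th root in \eqref{eq:trace-powers-lambda} equals $\lambda_n(\Phi)^k$ times a factor converging to the positive constant $m/n + \sqrt{m(n-m)/(n^2(n-1))}$. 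Since the $k$-th root of any positive constant tends to $1$, the entire expression converges to $\lambda_n(\Phi)$; combined with the lower bound just established, this sandwich forces equality in the limit.

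The main obstacle is the Wolkowicz--Styan step itself: a naive application of Cauchy--Schwarz to $(\mu_n - \bar{\mu})^2 = \bigl(\sum_{i<n}(\bar{\mu}-\mu_i)\bigr)^2$ only yields the easier \emph{upper} bound $\mu_n - \bar{\mu} \leq \tilde{s}\sqrt{n-1}$, so care is required in pairing the two one-sided estimates above to produce the lower bound $\mu_n - \bar{\mu} \geq \tilde{s}/\sqrt{n-1}$. The only bookkeeping caveat concerns the degenerate case $\mathbf{L}(\Phi) = 0$, where both sides of \eqref{eq:trace-powers-lambda} vanish and the limit is trivially $0 = \lambda_n(\Phi)$.
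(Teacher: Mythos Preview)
Your argument is correct, and the limit portion matches the paper's treatment essentially verbatim. For the inequality, however, the paper takes a shorter route than your two-step Wolkowicz--Styan derivation. Writing $\mu_i = \lambda_i(\Phi)^k$ as you do, the paper simply observes that the numbers $a_i := \mu_n - \mu_i$ are nonnegative, so
\[
\Bigl(\sum_{i=1}^n a_i\Bigr)^2 \;\geq\; \sum_{i=1}^n a_i^2,
\]
which after expanding both sides (using $\sum a_i = n(\mu_n - \bar\mu)$ and $\sum a_i^2 = n(\mu_n-\bar\mu)^2 + n\tilde s^2$) immediately yields $(n-1)(\mu_n-\bar\mu)^2 \geq \tilde s^2$. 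This is exactly the inequality you reach, but obtained in one stroke rather than by combining the Bhatia--Davis bound $\tilde s^2 \leq (\mu_n-\bar\mu)(\bar\mu-\mu_1)$ with the separate estimate $\bar\mu-\mu_1 \leq (n-1)(\mu_n-\bar\mu)$. Your route has the virtue of naming the classical source of the bound and making the equality analysis transparent (equality in Bhatia--Davis forces all $\mu_i \in \{\mu_1,\mu_n\}$), while the paper's route is more self-contained and avoids the intermediate machinery.
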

\begin{proof}
	 Let $\lambda_1(\Phi) \leq \dots \leq \lambda_{p-1}(\Phi) < \lambda_p(\Phi) = \dots = \lambda_n(\Phi)$ be the eigenvalues of $\mathbf{L}(\Phi).$
	It is clear that,
	\begin{gather}\label{eq:trace-lambda-n}
		( n\lambda_n^k - \Tr(L^{k}(\Phi)))^2 = \Bigg(\sum_{i=1}^{n} (\lambda_n^k - \lambda_i^k)\Bigg)^2 \geq \sum_{i=1}^{n} (\lambda_n^k - \lambda_i^k)^2.
	\end{gather}
	Now,
	$$
	\sum_{i=1}^{n} (\lambda_n^k - \lambda_i^k)^2 = \sum_{i=1}^{n} \big(\lambda_n^{2k} + \lambda_i^{2k} -2\lambda_n^{k}\lambda_i^{k}\big) = \Tr(L^{2k}(\Phi)) + n\lambda_n^{2k} - 2\lambda_n^k\Tr(L^k(\Phi)).$$ Also $$
	\sum_{i=1}^{n} (\lambda_n^k - \lambda_i^k)^2 = \frac{1}{n}( n\lambda_n^k - \Tr(L^{k}(\Phi)))^2 + \Tr(L^{2k}(\Phi)) - \frac{\Tr(L^k(\Phi))^2}{n}.
	$$
	Thus
	$$( n\lambda_n^k - \Tr(L^{k}(\Phi)))^2 \geq \frac{1}{n}( n\lambda_n^k - \Tr(L^{k}(\Phi)))^2 + \Tr(L^{2k}(\Phi)) - \frac{\Tr(L^k(\Phi))^2}{n}, $$	
	and hence	
	$$( n\lambda_n^k - \Tr(L^{k}(\Phi)))^2 \geq \frac{1}{n-1}\bigg(n\Tr(L^{2k}(\Phi)) - \Tr(L^k(\Phi))^2\bigg).$$ From the above inequality, it is easy to see that$$
	\lambda_n(\Phi) \geq \sqrt[k]{\frac{\Tr(L^{k}(\Phi))}{n} +  \sqrt{\frac{1}{n^2(n-1)}\bigg(n\Tr(L^{2k}(\Phi)) - \Tr(L^k(\Phi))^2\bigg)}}.$$Now, since in the limit of $k \to \infty$, $({\lambda_j/\lambda_n})^k \to 0$, we obtain
	\begin{align*}
		&\lim_{k \to \infty}
		\sqrt[k]{\frac{\Tr(L^{k}(\Phi))}{n} +  \sqrt{\frac{1}{n^2(n-1)}\bigg(n\Tr(L^{2k}(\Phi)) - \Tr(L^k(\Phi))^2\bigg)}} \\
		&= \lim_{k \to \infty}
		\sqrt[k]{\frac{\sum_{i=1}^n\lambda_i^k}{n} +  \sqrt{\frac{1}{n^2(n-1)}\bigg(n\sum_{i=1}^n\lambda_i^{2k} - (\sum_{i=1}^n\lambda_i^k)^2\bigg)}} \\
		&= \lambda_n \lim_{k \to \infty}
		\sqrt[k]{\frac{1}{n}\sum_{i=1}^n\Bigg(\frac{\lambda_i}{\lambda_n}\Bigg)^k +  \sqrt{\frac{1}{n^2(n-1)}\Bigg[n\sum_{i=1}^n\Bigg(\frac{\lambda_i}{\lambda_n}\Bigg)^{2k} - \Bigg(\sum_{i=1}^n\Bigg(\frac{\lambda_i}{\lambda_n}\Bigg)^k\Bigg)^2\Bigg]}} \\
		&= \lambda_n \lim_{k \to \infty}
		\sqrt[k]{\frac{n-p+1}{n} +  \sqrt{\frac{1}{n^2(n-1)}\bigg[n(n-p+1) - (n-p+1)^2\bigg]}} \\
		&= \lambda_n \lim_{k \to \infty}\Bigg(\frac{n-p+1}{n} +  \sqrt{\frac{(n-p+1)(p-1)}{n^2(n-1)}}\:\Bigg)^{\frac{1}{k}} = \lambda_n(\Phi).
	\end{align*}
	\textbf{Equality:} We know that equality in \eqref{eq:trace-lambda-n} follows if and only if $\lambda_i(\Phi) = \lambda_n(\Phi)$, for $i=1,2, \ldots, n-1$. This condition is satisfied for $\Phi \sim (K_n,1)$.
\end{proof}


\section{ Comparison between extremal eigenvalues bounds}\label{sec:comparison}

In this section, we perform a comparative analysis for different bounds obtained in this paper for both $\lambda_1(\Phi)$ and $\lambda_n(\Phi)$.

We perform all of our experiments\footnote{Code available at: \url{https://github.com/KumarNavish/gain-extremal-bounds}} on a set of Erd\H{o}s-Re\'yni graphs, $G(n,p)$, which are constructed by connecting nodes randomly. Each edge is included in the graph with probability $p$ independent from every other edge. Equivalently, all graphs with $n$ nodes and $m$ edges have equal probability  $p^m(1-p)^{\binom{n}{2}- m}$.

We used \texttt{Networkx} package to generate these random graphs. The way we define random complex gains of unit modulus is as follows:
\begin{itemize}
	\item First, we generate a random $n \times n$ matrix containing unit modulus complex numbers using \texttt{Numpy} (seed is set to 0). Then we take Hadamard product of this matrix with the
	$(0,1)$-adjacency matrix.
	\item Finally, we add the matrix obtained in last step with its Hermitian conjugate transpose and divide each entry by its modulus to obtain the gain adjacency matrix.
\end{itemize}

We construct a signed bipartite complete graph $K_{5,15}$, whose  adjacency matrix is given by
\begin{align}\label{eq: bipartite-adj}
	B = \begin{bmatrix}
		0   & A\\
		A^T & 0
	\end{bmatrix},
\end{align}where
\begin{align*}
	\setcounter{MaxMatrixCols}{15}
	A = \begin{bmatrix}
		1 &  -1 & \phantom{-}1 & \phantom{-}1 & \phantom{-}1 & \phantom{-}1 & -1 & -1 & -1 & -1 & \phantom{-}1 & \phantom{-}1 & \phantom{-}1 & \phantom{-}1 & \phantom{-}1\\
		1 & \phantom{-}1 & -1 & \phantom{-}1 & \phantom{-}1 & \phantom{-}1 & -1 & \phantom{-}1 & \phantom{-}1 & \phantom{-}1 & -1 & -1 & -1 & \phantom{-}1 & \phantom{-}1\\
		1 & \phantom{-}1 & \phantom{-}1 & -1 & \phantom{-}1 & \phantom{-}1 & \phantom{-}1 & -1 & \phantom{-}1 & \phantom{-}1 & -1 & \phantom{-}1 & \phantom{-}1 & -1 & -1\\
		1 & \phantom{-}1 & \phantom{-}1 & \phantom{-}1 & -1 & \phantom{-}1 & \phantom{-}1 & \phantom{-}1 & -1 & \phantom{-}1 & \phantom{-}1 & -1 & \phantom{-}1 & -1 & \phantom{-}1\\
		1 & \phantom{-}1 & \phantom{-}1 & \phantom{-}1 & \phantom{-}1 & -1 & \phantom{-}1 & \phantom{-}1 & \phantom{-}1 & -1 & \phantom{-}1 & \phantom{-}1 & -1 & \phantom{-}1 & -1
	\end{bmatrix}.
\end{align*}

We have taken this example from \cite{bowlin-max-frus}. The major objective of \cite{bowlin-max-frus} was to find the maximum frustration index of a graph over all possible signings of the edges, but we use this example to show the effectiveness of the gain dependent bound obtained in \eqref{eq:bipartite-opt} for estimating $\lambda_1(\Phi)$, especially when the value of $a(\Phi)$ is large. In Table \ref{tab:bipartite-lambda-bounds}, we summarize the performance of all the bipartite bounds obtained in this paper, for both small and large values of $a(\Phi)$, in the upper and lower half of the table respectively. Note that for the Erd\H{o}s-Re\'yni graphs used for experiments, the $a(\Phi)$ value is intrinsically low. To get results for higher values of $a(\Phi)$, we use the $b(\Phi)$ values instead, which are typically high for the chosen graphs.

\begin{figure}[H]
	\centering
	\subfloat[$H_1 \:( n=10, p=0.70, m=17)$]{\includegraphics[width=5cm,scale=0.8]{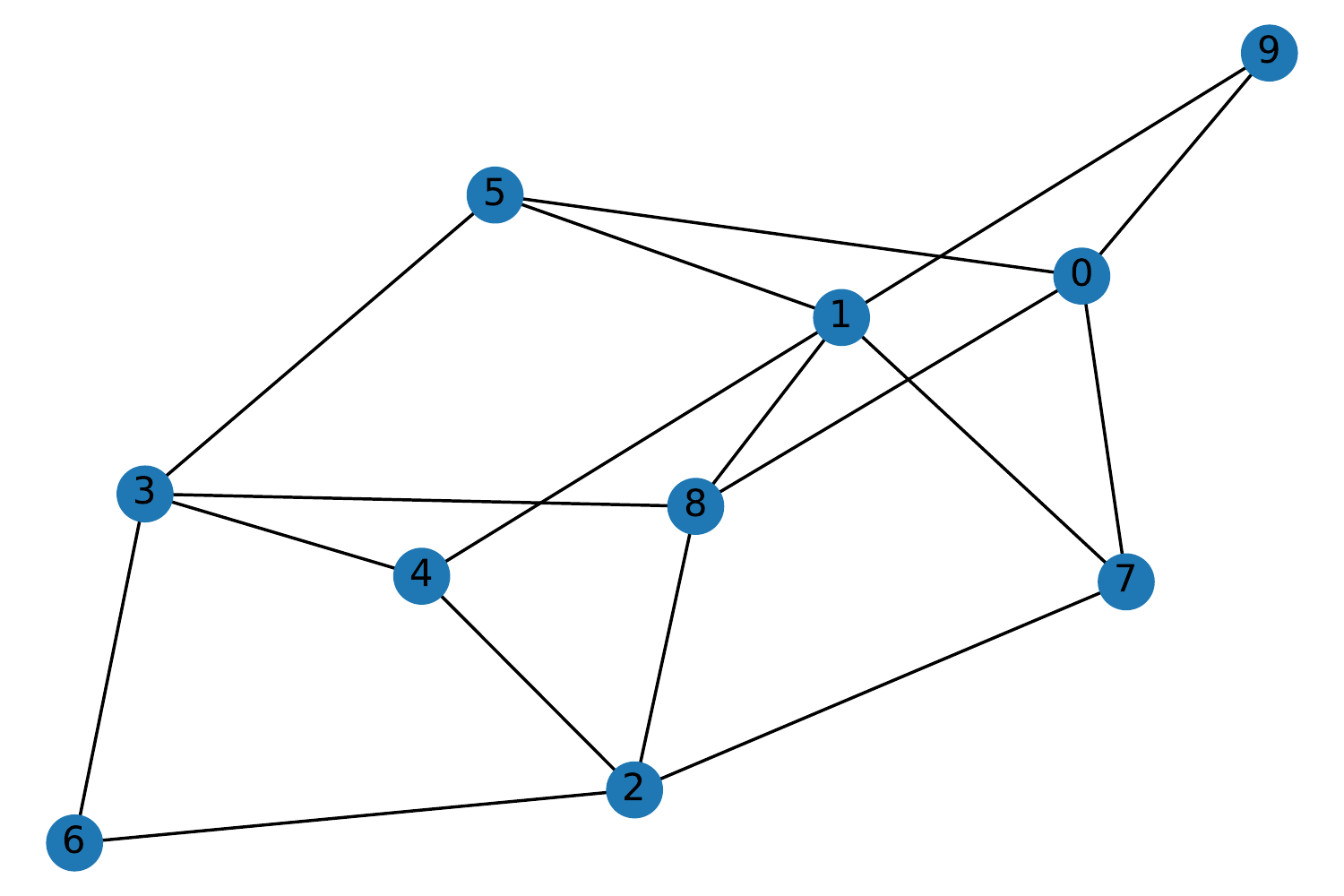}}
	\quad
	\subfloat[$H_2 \:( n=12, p=0.70, m=22)$]{\includegraphics[width=5cm,scale=0.8]{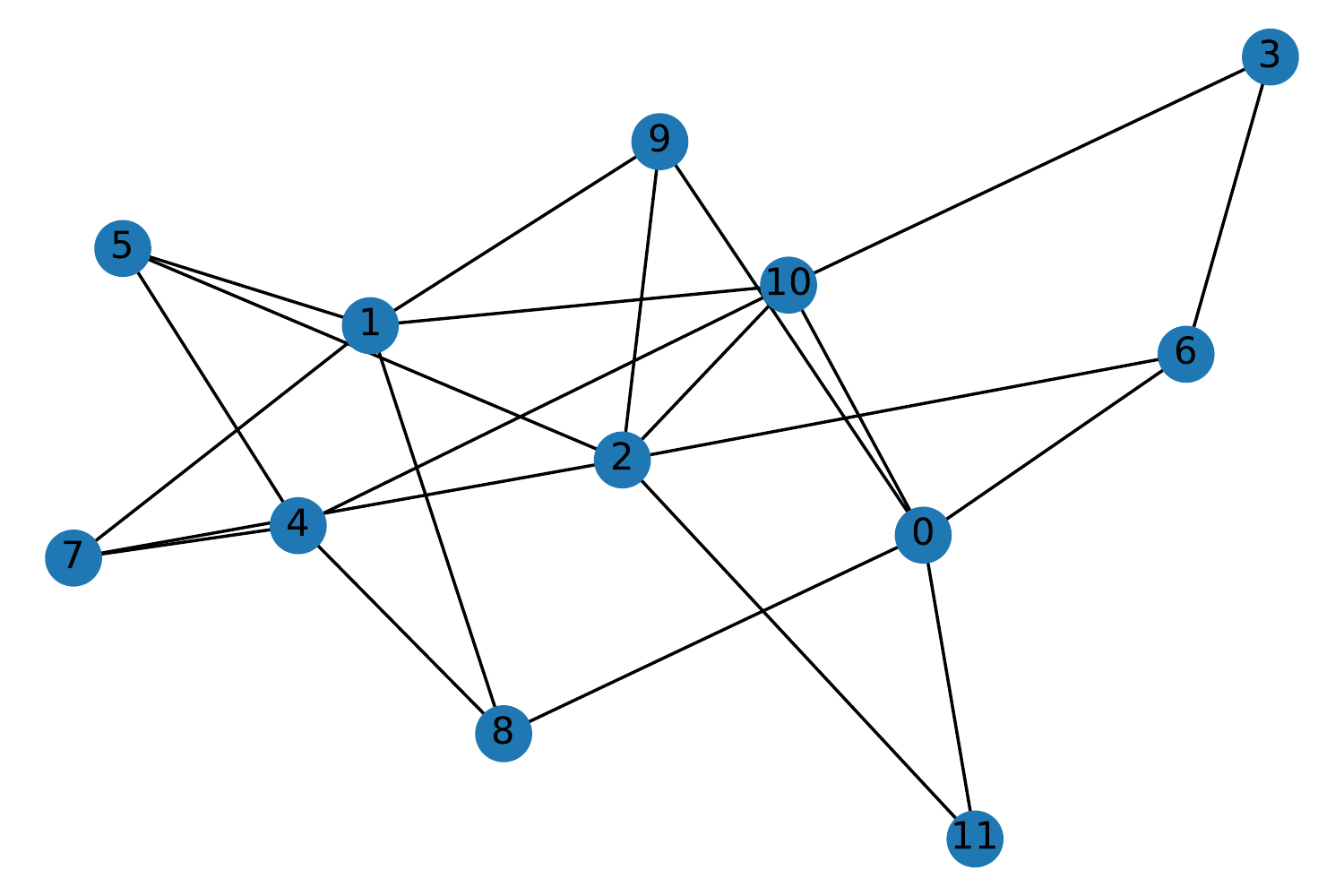}}
	\quad
	\subfloat[$H_3 \:( n=14, p=0.70, m=31)$]{\includegraphics[width=5cm,scale=0.8]{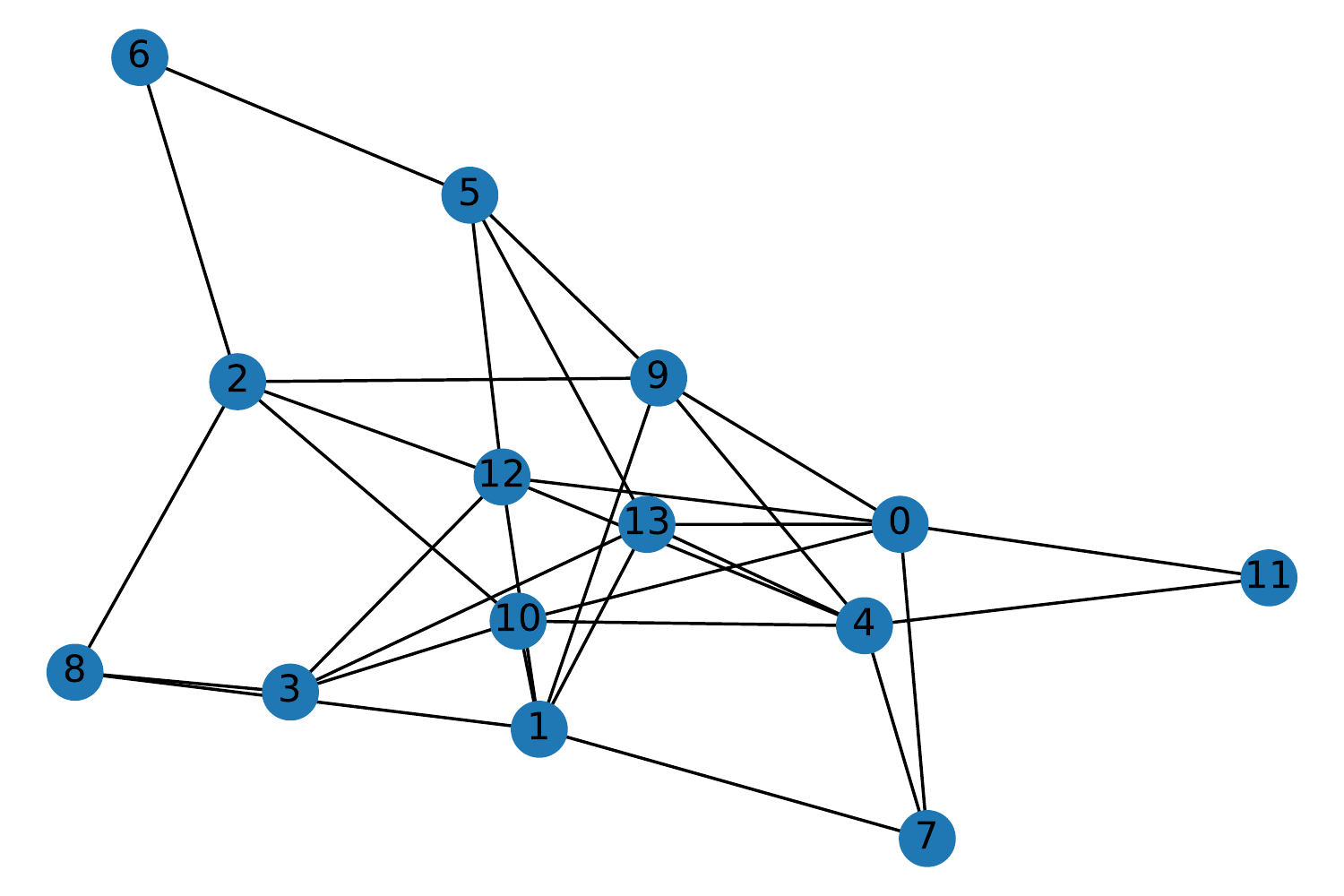}}
	\caption{\small Erd\H{o}s-Re\'yni Bipartite graphs. }
	\label{fig:bipartite-graphs}
\end{figure}

In Figure \ref{fig:bipartite-graphs}, we draw the Erd\H{o}s-Re\'yni bipartite graphs used in order to analyze the bound given in \eqref{eq:bipartite-opt}, and set the seed parameter to $0$ in \texttt{Networkx}. The bounds  are provided in Table \ref{tab:bipartite-lambda-bounds}. By looking at the highlighted entries in the following tables, we could infer that the bounds established in this work are better than some of the other bounds.

\newcolumntype{R}{>{$}r<{$}}  %
\newcolumntype{V}[1]{>{[\;}*{#1}{R@{\;\;}}R<{\;]}} %
\renewcommand{\arraystretch}{2.4}
\begin{table}[H]
	{\footnotesize
		\captionsetup{position=top} 
		\caption{\small Comparison of upper bound for $\lambda_1(\Phi)$ for bipartite graphs. }\vspace*{-1mm}\label{tab:bipartite-lambda-bounds}
		\begin{center}
			\subfloat{
				\begin{tabular}{|l|R|*4{p{16mm}}|} \Xhline{4\arrayrulewidth}
					\multicolumn{6}{|c|}{\textbf{\textsf{Upper Bounds for $\lambda_1(\Phi)$}}} \\ \hline
					
					\multicolumn{2}{|c|}{\textbf{\textsf{Bounds}}} &
					$H_1$ & $H_2$ & $H_3$ & $\substack{K_{5,15}\\ \eqref{eq: bipartite-adj}}$\\ \hline \hline
					
					\multicolumn{2}{|c|}{$\lambda_1(\Phi)$} & $0.018$ & $0.042$ & $0.050$ & $3.597$ \\ \Xhline{2\arrayrulewidth}
					
					\multicolumn{2}{|c|}{$a(\Phi)$} & $0.017$ & $0.025$ & $0.018$ & $0.613$  \\ \hline
					\multicolumn{2}{|c|}{$\frac{2m}{n}\cdot a(\Phi)$} & \hlgreen{$0.059$} & \hlgreen{$0.093$} & \hlgreen{$0.078$} & $4.600$ \\ \hline
					
					\multicolumn{2}{|c|}{\eqref{eq:bipartite-opt}} & \hlgreen{$0.059$} & \hlgreen{$0.093$} & \hlgreen{$0.078$} & \hlgreen{$3.982$}\\ \Xhline{2\arrayrulewidth}
					
					\multicolumn{2}{|c|}{$b(\Phi)$} & $0.958$ & $0.977$ & $0.991$ & $1.000$  \\ \hline
					
					\multicolumn{2}{|c|}{$\frac{2m}{n}\cdot b(\Phi)$} & $3.257$ & $3.581$ & $4.388$ & $7.500$\\ \hline
					
					\multicolumn{2}{|c|}{\eqref{eq:bipartite-opt}} & $2.818$ & $3.137$ & $3.874$ & $5.000$ \\ \hline

					\Xhline{4\arrayrulewidth}
			\end{tabular}}
		\end{center}
	}
	\vspace*{-7mm}
\end{table}

In Figure \ref{fig:erdos-reyni-graphs}, we draw the Erd\H{o}s-Re\'yni graphs, and set the seed parameter to $n$ (number of vertices in the random graph) in \texttt{Networkx}. We made comparisons between different bounds using these graphs as basis.
\begin{figure}[H]
	\centering
	\subfloat[$G_1 \:( n=6, p=0.55, m=8)$]{\includegraphics[width=5.5cm]{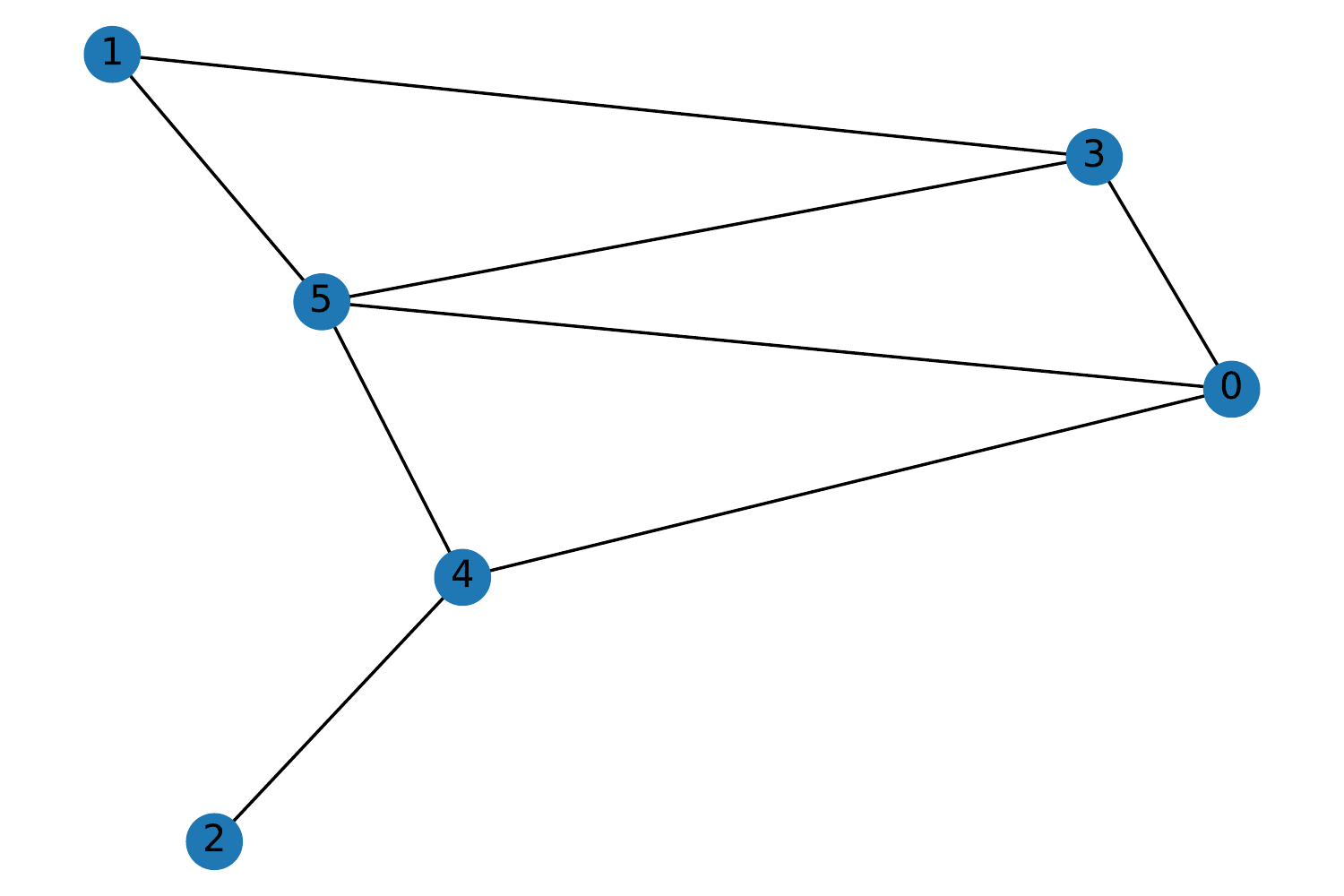}}
	\quad
	\subfloat[$G_2 \:( n=7, p=0.72, m=18)$]{\includegraphics[width=5.5cm]{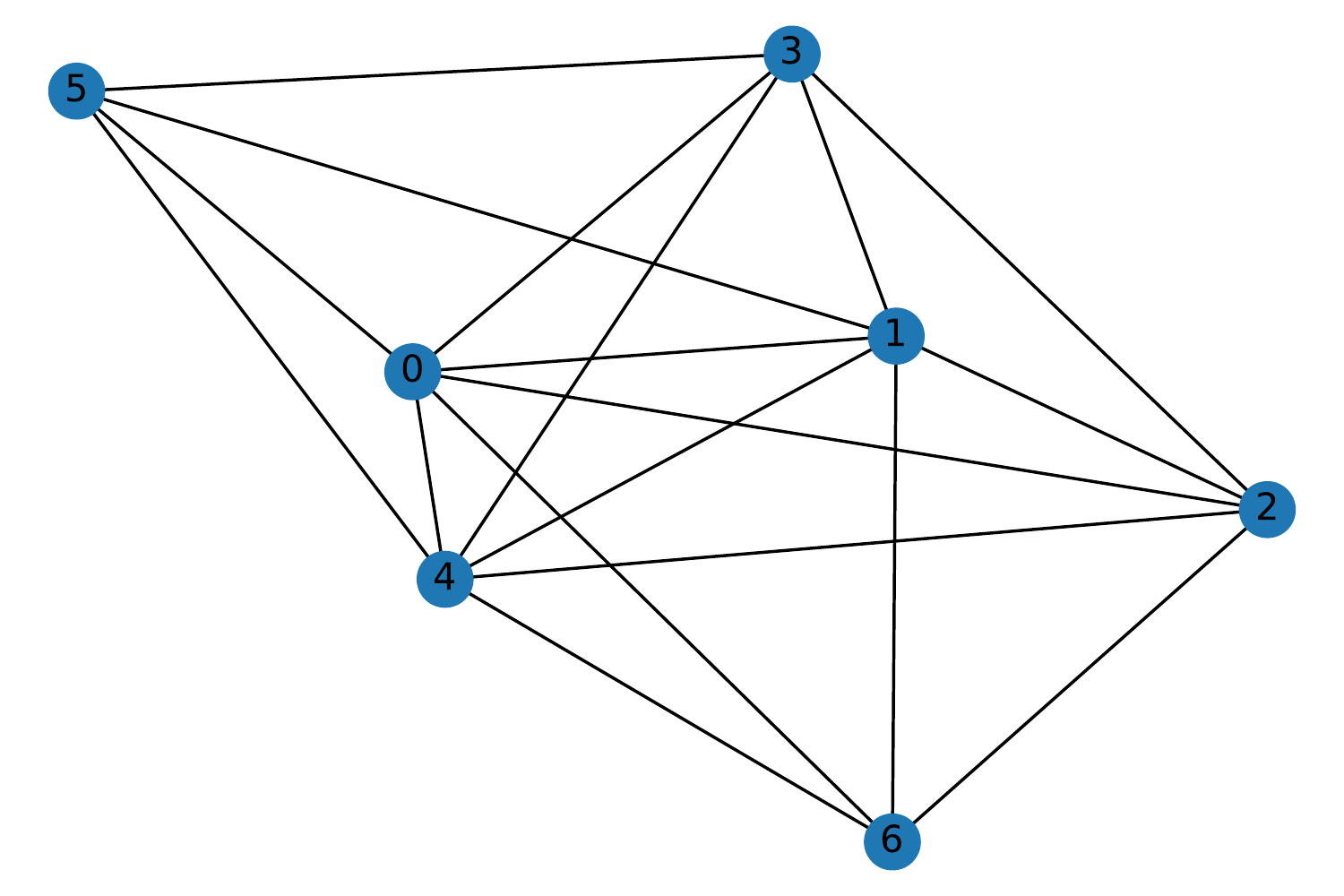}}
	\quad
	\subfloat[$G_3 \: (n=8, p=0.60, m=19)$]{\includegraphics[width=5.5cm]{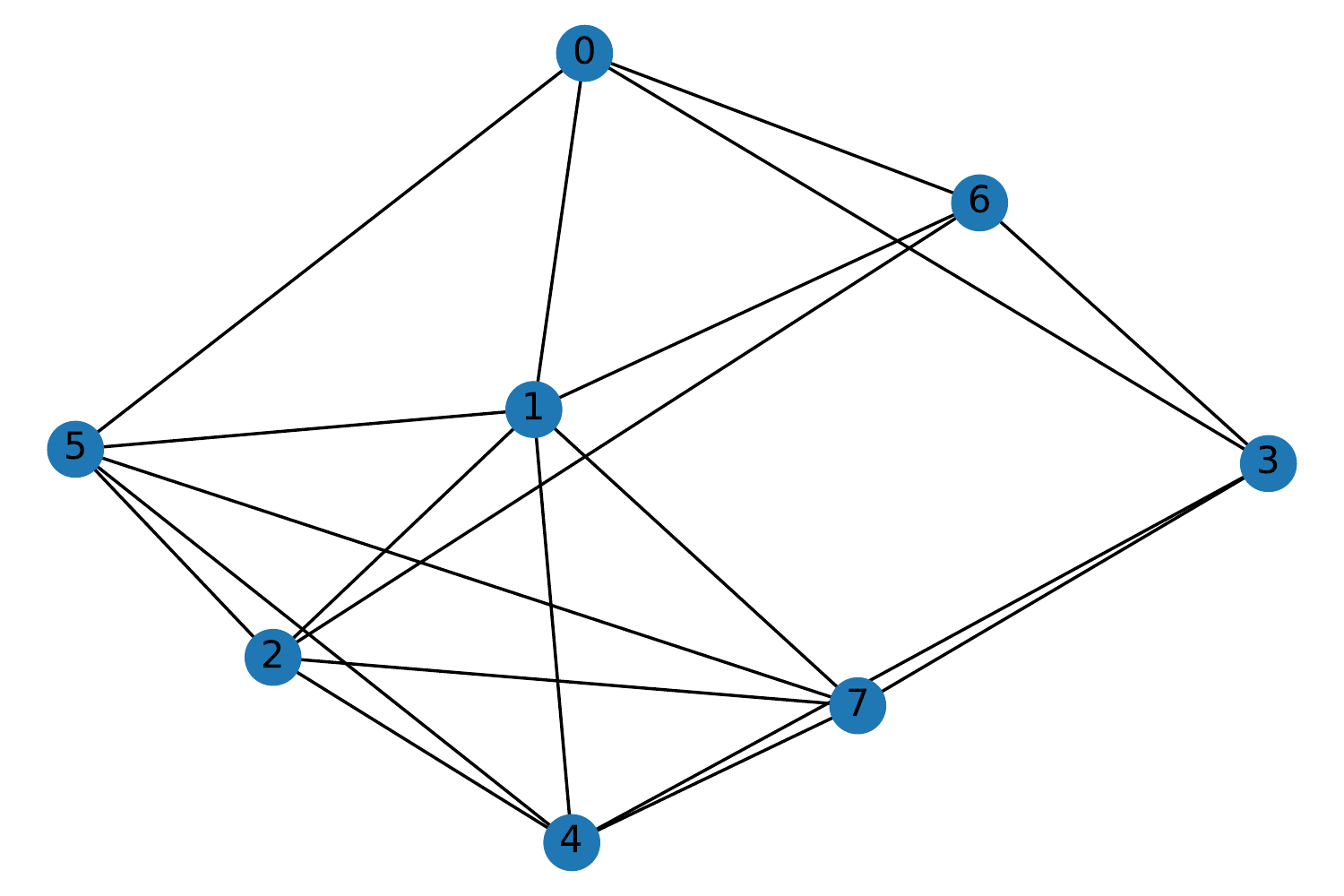}}
	\quad
	\subfloat[$G_4  \:(n=9, p=0.54, m=22)$]{\includegraphics[width=5.5cm]{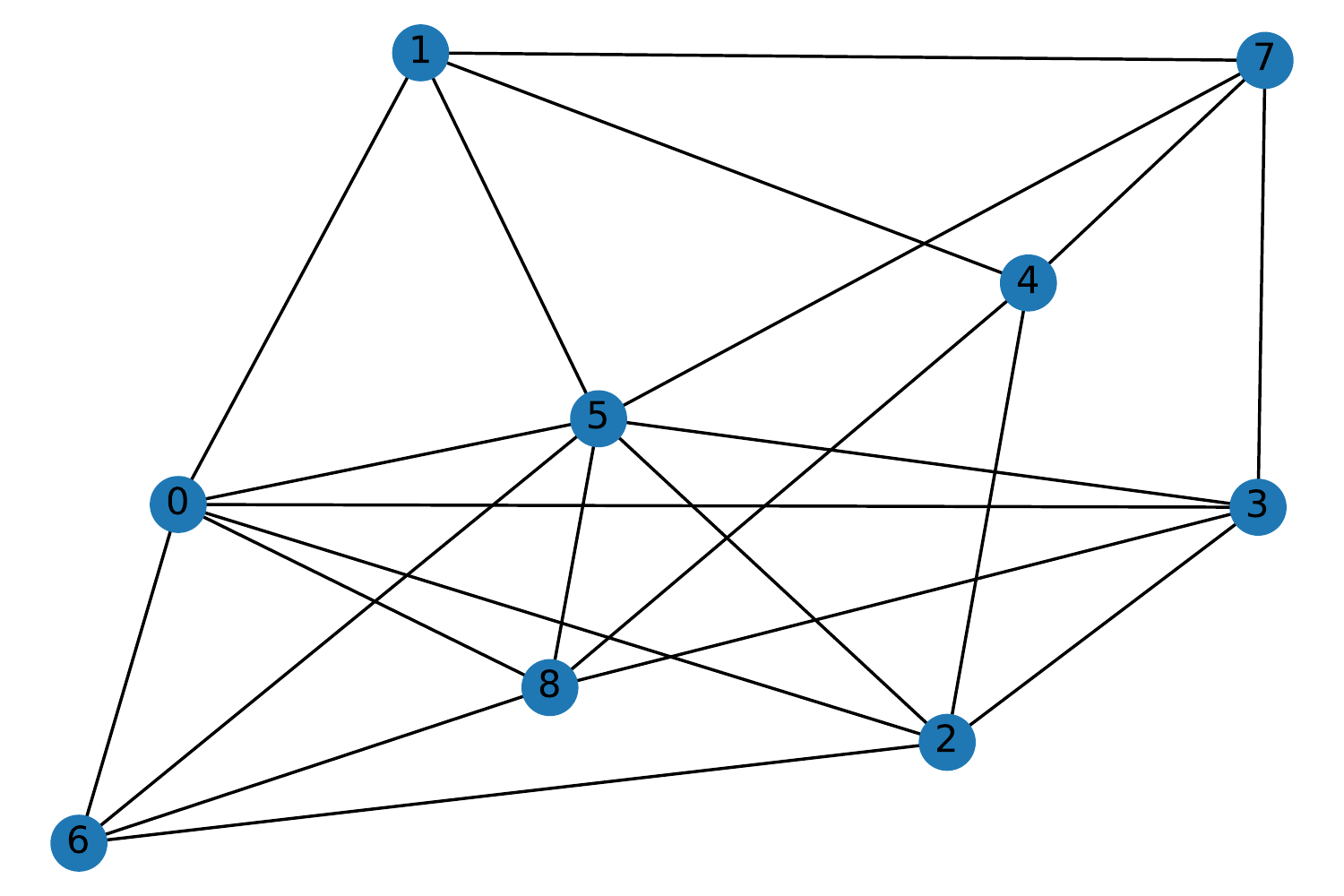}}
	\quad
	\subfloat[$G_5 \:( n=10, p=0.42, m=19)$]{\includegraphics[width=5.5cm]{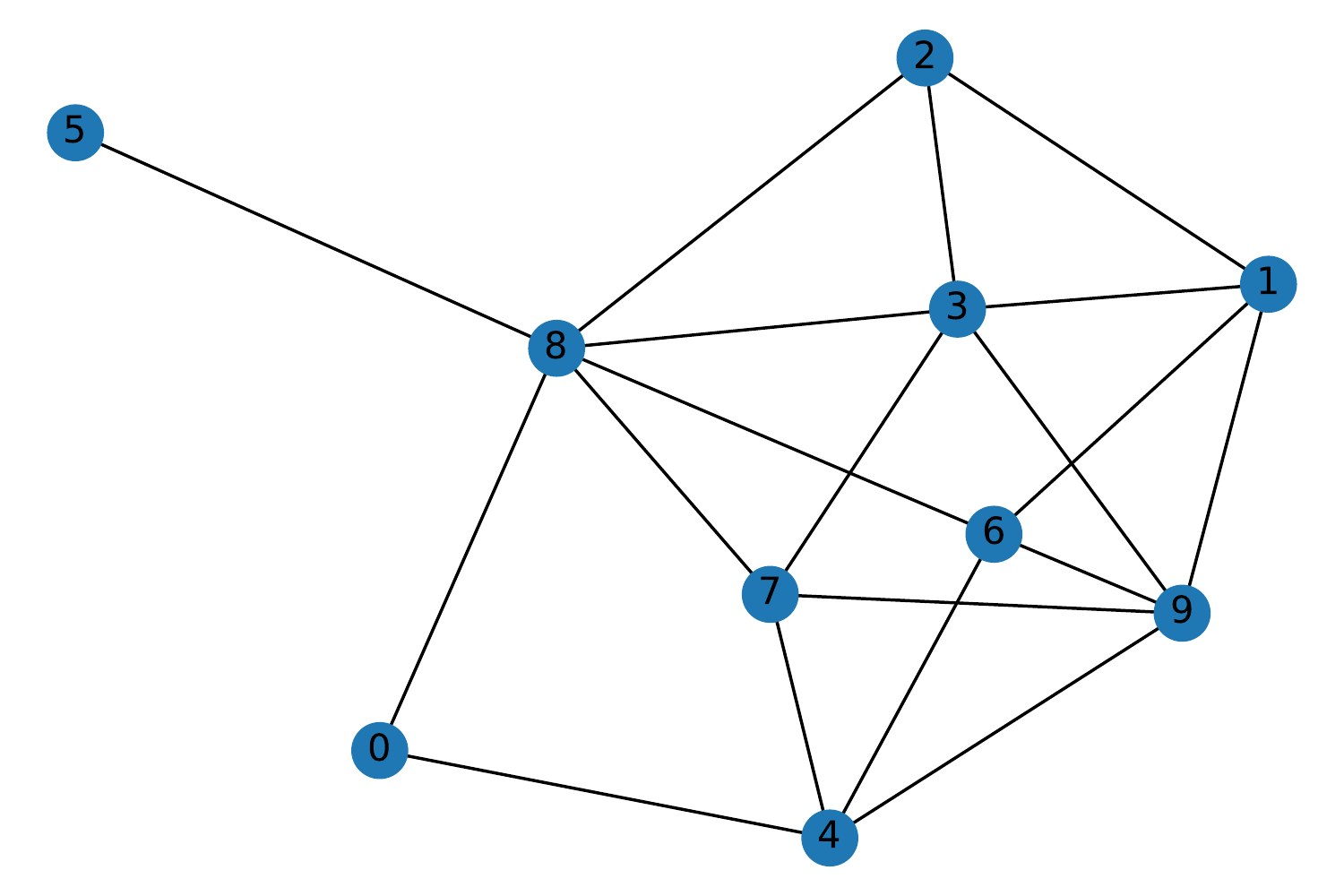}}
	\caption{\small Erd\H{o}s-Re\'yni graphs. }
	\label{fig:erdos-reyni-graphs}
\end{figure}Table \ref{tab:upper-bounds-for-least-eig} summarizes the upper bounds obtained in this paper for the least eigenvalue $\lambda_1(\Phi)$, for each of the Erd\H{o}s-Re\'yni graphs given in Figure \ref{fig:erdos-reyni-graphs}. The most noticeable bounds are the gain dependent bounds involving $a(\Phi)$ and $a_{\theta}(\Phi)$ quantities, which perform extremely well as compared to the existing gain independent bounds and the optimal degree bounds proven in this paper. It is also worth noting that the bound given in \eqref{eq:chromres-5-opt} which involves the quantity $a_{\theta}(\Phi)$ has superior performance across all instances of Erd\H{o}s-Re\'yni graphs.

\newcolumntype{R}{>{$}r<{$}}  %
\newcolumntype{V}[1]{>{[\;}*{#1}{R@{\;\;}}R<{\;]}} %
\renewcommand{\arraystretch}{2.8}
\begin{table}[H]
	{\footnotesize
		\captionsetup{position=top} 
		\caption{\small Comparison of upper bounds for $\lambda_1(\Phi)$ .}\vspace*{-1mm}\label{tab:upper-bounds-for-least-eig}
		\begin{center}
			\subfloat{
				\begin{tabular}{|c|R|*5{p{16mm}}|} \Xhline{4\arrayrulewidth}
					
					\multicolumn{7}{|c|}{\textbf{\textsf{Upper Bounds for $\lambda_1(\Phi)$}}} \\ \hline
					
					\multicolumn{2}{|c|}{\textbf{\textsf{Bounds}}} &
					$G_1$ & $G_2$ & $G_3$ & $G_4$ & $G_5$\\ \hline \hline
					
					\multicolumn{2}{|c|}{$\lambda_1(\Phi)$} & $0.013$ & $0.059$ & $0.05$ & $0.039$ & $0.032$ \\ \hline
					
					\multicolumn{2}{|c|}{$a(\Phi)$} & $0.03$ & $0.015$ & $0.017$ & $0.010$ & $0.014$ \\ \hline
					
					\multicolumn{2}{|c|}{$b(\Phi)$} & $1.1$ & $0.982$ & $1.041$ & $0.984$ & $0.977$ \\ \hline

					\multicolumn{2}{|c|}{$\frac{2m}{n}\cdot a(\Phi)$} & $0.075$ & \hlgreen{$0.079$} & $0.082$ & \hlgreen{$0.051$} & $0.054$ \\ \hline

					\multicolumn{2}{|c|}{\eqref{eq:chromres-5-opt}} & \hlgreen{$0.065$} & \hlgreen{$0.079$} & \hlgreen{$0.080$} & \hlgreen{$0.051$} & \hlgreen{$0.053$} \\
					\Xhline{2\arrayrulewidth}
					
					\multicolumn{2}{|c|}{$\min\limits_{v_s \sim v_t} \bigg(\frac{1}{2}\{d_s+d_t-2\}\bigg)$} & $1.000$ & $3.500$ & $3.000$ & $3.000$ & $2.000$ \\ \hline
					
					\multicolumn{2}{|c|}{$\min\limits_{v_s \sim v_t}\frac{1}{2}\Bigg(d_s+d_t- \sqrt{(d_s-d_t)^2+4}\Bigg)$} & $0.586$ & $3.382$ & $3.000$ & $3.000$ & $0.807$ \\ \hline

					\multicolumn{2}{|c|}{\eqref{eq:3d-degree-bound1-triangle}} & $1.013$ & $3.000$ & $2.005$ & $2.020$ & $2.012$ \\ \hline
					
					\multicolumn{2}{|c|}{\eqref{eq:3d-degree-bound2-triangle}} & $0.873$ & $2.814$ & $2.005$ & $2.020$ & $1.872$ \\ \hline
					
					\multicolumn{2}{|c|}{\eqref{eq:3d-degree-bound3-triangle}} & $1.013$ & $2.865$ & $2.005$ & $2.020$ & $1.827$ \\ \hline
					
					\multicolumn{2}{|c|}{\eqref{eq:3d-degree-bound4-triangle}} & $0.379$ & $3.004$ & $2.005$ & $2.020$ & $1.517$ \\ \hline
					
					\multicolumn{2}{|c|}{\eqref{eq:3d-degree-bound1-tree}} & $1.000$ & $3.333$ & $3.000$ & $3.000$ & $1.667$ \\ \hline
					
					\multicolumn{2}{|c|}{\eqref{eq:3d-degree-bound2-tree}} & $0.775$ & $3.293$ & $2.775$ & $2.775$ & $0.411$ \\ \hline
					
					\multicolumn{2}{|c|}{\eqref{eq:3d-degree-bound3-tree}} & $0.634$ & $3.219$ & $2.775$ & $2.775$ & $0.775$ \\ \hline
					
					\multicolumn{2}{|c|}{\eqref{eq:3d-degree-bound4-tree}} & $0.363$ & $3.268$ & $2.814$ & $2.814$ & $0.551$ \\
					
					\Xhline{4\arrayrulewidth}
			\end{tabular}}
		\end{center}
	}
	\vspace*{-7mm}
\end{table}

\newcolumntype{R}{>{$}r<{$}}  %
\newcolumntype{V}[1]{>{[\;}*{#1}{R@{\;\;}}R<{\;]}} %
\renewcommand{\arraystretch}{2}
\begin{table}[H]
	{\footnotesize
		\captionsetup{position=top} 
		\caption{\small Comparison of  different lower and upper bounds for $\lambda_n(\Phi)$. Here $r$ is defined in \eqref{eq: gershgorian-n}.}\vspace*{-1mm}\label{tab:bounds-for-max-eig}
		\begin{center}
			\subfloat{
				\begin{tabular}{|l|R|*5{p{16mm}}|} \Xhline{4\arrayrulewidth}
					\multicolumn{7}{|c|}{\textbf{\textsf{Lower Bounds for $\lambda_n(\Phi)$}}} \\ \hline
					
					\multicolumn{2}{|c|}{\textbf{\textsf{Bounds}}} &
					$\substack{G_1 \\
						(r=0.560)}$ & $\substack{G_2 \\
						(r=0.990)}$ & $\substack{G_3 \\
						(r=0.990)}$ & $\substack{G_4 \\
						(r=0.990)}$ & $\substack{G_5 \\
						(r=0.320)}$\\ \hline \hline
					
					\multicolumn{2}{|c|}{$\lambda_n(\Phi)$} & $5.19$ & $7.44$ & $7.59$ & $8.29$ & $7.70$ \\ \Xhline{2\arrayrulewidth}
					
					\multicolumn{2}{|c|}{$\Delta+1$} & $5$ & $7$ & $7$ & $8$ & $7$ \\ \hline
					
					\multicolumn{2}{|c|}{$\pmb{^\ddag}$ $\bigg(\max\limits_{i \in \{1,2,\dots,n\}} \big(\mathbf{L}^k(\Phi)_{ii}\big)\bigg)^{\frac{1}{k}}$} & $(11,5.002)$ & $(12,7.013)$ & $(7,7.014)$ & $(9,8.005)$ & $(6,7.081)$ \\ \hline
					
					\multicolumn{2}{|c|}{$\pmb{^\S}$\eqref{eq:trace-powers-lambda}} & $(29,5.002)$ & $(17,7.014)$ & $(17,7.021)$ & $(42,8.000)$ & $(17,7.018)$ \\ \hline
					

					\multicolumn{7}{|c|}{\textbf{\textsf{Upper Bounds for $\lambda_n(\Phi)$}}} \\ \hline \hline
					
					\multicolumn{2}{|c|}{$\lambda_n(-)$} & $6.242$ & $10.661$ & $9.892$ & $10.560$ & $8.868$ \\ \Xhline{2\arrayrulewidth}
					
					\multicolumn{2}{|c|}{$2\Delta$} & $8$ & $12$ & $12$ & $14$ & $12$ \\ \hline
					
					\multicolumn{2}{|c|}{$\max\limits_{v_i \sim v_j}\{d_i+d_j\}$} & $7$ & $12$ & $11$ & $13$ & $11$\\ \hline
					
					\multicolumn{2}{|c|}{$\max\limits_{v_i \in V(\Phi)}\{d_i+m_i\}$} & $6.750$ & $11.000$ & $10.667$ & $11.714$ & $9.400$ \\ \hline
					
					\multicolumn{2}{|c|}{$\max\limits_{v_i \in V(\Phi)}\{d_i+m_i^k\}$} & $6.750$ & $11.000$ & $10.667$ & $11.714$ & $9.400$ \\ \hline
					
					\multicolumn{2}{|c|}{$\max\limits_{v_i \in V(\Phi)}\{d_i+n_i^k\}$} & \hlgreen{$6.547$} & \hlgreen{$10.802$} & ${10.403}$ & \hlgreen{$11.338$} & $9.359$ \\ \hline
					
					\multicolumn{2}{|c|}{$\max\limits_{v_i \in V(\Phi)}\{d_i+l_i^k\}$} & $6.635$ & {$10.834$} & $10.470$ & $11.572$ & $11.362$ \\ \hline
					
					\multicolumn{2}{|c|}{$\max\limits_{v_i \sim v_j}\Bigg\{\frac{(d_i (d_i+m_i)+d_j(d_j+m_j))}{(d_i+d_j)}\Bigg\}$} & {$6.571$} & $11$ & \hlgreen{$10.364$} & {$11.385$} & \hlgreen{$9.300$} \\ \hline
					
					
					\Xhline{4\arrayrulewidth}
			\end{tabular}}
		\end{center}
	}
	\vspace*{-7mm}
\end{table}
\footnotesize{* In $\pmb{\ddag}$ and $\pmb{\S}$, we write the value of $k$ along with the bound, for which $\Delta+1$ bound is beaten.\\In rest of the $k$ involving bounds we write the minimum possible bound value for all values of $0 \leq k \leq 100$.}\\

\section{Conclusion}
{In this paper, we considered the notion of Frustration number and Frustration index for complex unit gain graphs, and established
	that they are lower bounded by the smallest eigenvalue, $\lambda_1(\Phi)$, of the gain Laplacian matrix. The fact that computing frustration index  is an NP-hard problem, makes this bound invaluable when it comes to the applicability of frustration index in real world applications \cite{aref2019signed}. Furthermore, we provided several optimal bounds for the smallest and largest eigenvalues of the gain Laplacian matrices, which are on their own crucial quantities for several problems arising in spectral graph theory and its applications, and more importantly, could be used to efficiently estimate the frustration index and the frustration number. It is well established that $\lambda_1(\Phi)$ is a measure of balance and a gain graph is balanced if and only if $\lambda_1(\Phi)=0$ \cite{reff2012spectral}, therefore optimal bounds for $\lambda_1(\Phi)$ are extremely useful in determining the balance of the underlying graph. As a major highlight of this paper, we provided several gain-dependent bounds for $\lambda_1(\Phi)$, which reduces to $0$ for the usual Laplacian and signless bipartite Laplacian. Not only these bounds are novel but also perform extremely well for the optimal choice of gains, and reduce to their existing counterparts present in the literature, for special choices of gains. We also showed limit convergence for two of our bounds to the largest eigenvalue, and obtained optimal extremal bounds by posing optimization problems to achieve the best possible bounds.}

\section*{Acknowledgments}

We are indebted to the referee for the comments and detailed suggestions, which helped us to improve the manuscript substantially. M. Rajesh Kannan would like to thank the Department of Science and Technology, India, for financial support through the projects MATRICS (MTR/2018/000986) and Early Career Research Award (ECR/2017/000643).
	
	\bibliographystyle{plain}
	\bibliography{kannan-kumar-pragada}

\end{document}